\newcommand{\seti}{\mathcal{X}}
\renewcommand{\tilde}[1]{\widetilde{#1}}
\renewcommand{\hat}[1]{\widehat{#1}}
\newcommand{\hfcn}{h}
\newcommand{\fcnl}{\varphi}
\newcommand{\txi}{\tilde{\xi}}
\newcommand{\benders}{\text{BD}}
\newcommand{\masterp}{\text{MP}}
\newcommand{\dsubp}{\text{DSP}}
\newcommand{\cvar}{\mathbf{CVaR}}
\newcommand{\Probt}{\mathop{\mathbb{P{}}_{\text{true}}}}
\newcommand{\ProbQ}{\mathop{\mathbb{Q{}}}}
\newcommand{\setcross}{\mathcal{S}}
\newcommand{\CCP}{\text{CCP}}
\newcommand{\SAA}{\text{SAA}}
\newcommand{\DRCC}{\text{DR-CCP}}
\newcommand{\DRCCbC}{\text{DRC}}
\newcommand{\hProb}{\hat{\Prob{}}}
\newcommand{\CVaR}{\textbf{CVaR}}
\newcommand{\VaR}{\textbf{VaR}}
\newcommand{\Path}{\textsc{Path}}
\newcommand{\OOS}{\text{OOS}}
\newcommand{\ltag}[1]{\text{(#1)}}
\newcommand{\edits}[1]{{\color{black} #1}}
\newcommand{\editsII}[1]{{\color{black} #1}}
\title{ {\bf Chance-Constrained Set Covering with Wasserstein Ambiguity}
\author{\normalsize {\bf Haoming Shen} and {\bf Ruiwei Jiang} \\
{\small Department of Industrial and Operations Engineering}\\
{\small University of Michigan, Ann Arbor, MI 48109}\\[1mm]
{\small Email: \{hmshen, ruiwei\}@umich.edu}\\
}
}
\date{}
\begin{document}

\maketitle

\begin{abstract}
We study a generalized distributionally robust chance-constrained set covering
problem (\DRCCbC{}) with a Wasserstein ambiguity set, where both decisions and
uncertainty are binary-valued. We establish the NP-hardness of \DRCCbC{} and
recast it as a two-stage stochastic program, which facilitates decomposition
algorithms. Furthermore, we derive two families of valid inequalities. The first
family targets the hypograph of a ``shifted'' submodular function, which is
associated with each scenario of the two-stage reformulation. We show that the
valid inequalities give a complete description of the convex hull of the
hypograph. The second family mixes inequalities across multiple scenarios and
gains further strength via lifting. Our numerical experiments demonstrate the
out-of-sample performance of the \DRCCbC{} model and the effectiveness of our
proposed reformulation and valid inequalities.

\vspace{0.25in}

\noindent{\it Key words:} chance constraints, Wasserstein ambiguity, valid inequalities, convex hull
\end{abstract}


\section{Introduction}%
\label{sec:intro}
%
We consider a set covering model, in which one selects a subset of \(n\)
elements to cover \(I\) targets with a minimum cost. If we employ
\(x \in \binaries^n\) to denote a binary decision vector such that, for all
\(j \in [n] := \{1, \ldots, n\}\), \(x_j = 1\) if element \(j\) is selected and
\(x_j = 0\) otherwise, then we can formulate such coverage requirement as a set
of linear inequalities:
\begin{align}
  x^{\top} \xi_i \geq 1, \quad \forall i \in [I],
  \label{eqn:sc-constr}
\end{align}
where \(\xi_i := [\xi_{i1}, \ldots, \xi_{in}]^{\top}\) denotes a vector of
binary parameters such that \(\xi_{ij} = 1\) if element \(j\) is able to cover
target \(i\) and \(\xi_{ij} = 0\) otherwise. Set covering models find a variety
of real-world applications, including scheduling~\cite{smith-1988-bus-crew},
production planning~\cite{vasko-1989-set-cover}, facility
location~\cite{gunawardane-1982-dynam-version}, and vehicle
routing~\cite{bramel-1997-effec-set}, etc. Consider, for example, we wish to
build medical facilities among \(n\) locations to cover \(I\) target residential
regions. In this context, \(\xi_i\) describes the connection between target
\(i\) and all candidate locations and it may depend on their distances, e.g.,
\(\xi_{ij} = 1\) if target \(i\) is near location \(j\). Accordingly,
constraints~\eqref{eqn:sc-constr} ensure that every target is within the
neighborhood of some open facilities. This model can be generalized to
incorporate backup coverage (see~\cite{yang-2005-gener-weigh}). For example,
crucial targets with higher priority (e.g., nursing homes) may be covered by
multiple open facilities. This generalizes constraints~\eqref{eqn:sc-constr} to
\(x^{\top} \xi_i \geq v_i, \forall i \in [I]\), where \(v_i \geq 1\) denotes an
integer constant that represents the coverage level of target \(i\).

In emergency (\eg{}, natural disasters), the connection between target \(i\) and
open facilities may be randomly disrupted. In that case, it is convenient to
model \(\xi_i\) as Bernoulli random variables, denoted by \(\txi_i, i \in [I]\),
and formulate the generalized set covering constraints probabilistically:
\begin{align}
  \Probt{}\Set{x^{\top} \txi_i \geq v_i, \forall i \in [I]} \geq 1 - \epsilon,
  \label{eqn:ccbc-constr}
\end{align}
where \(\Probt{}\) denotes the joint probability distribution of \(\txi\) and
\(\epsilon\), often chosen to be small such as \(0.1\) and \(0.05\), denotes a
pre-specified risk level. Intuitively, a selection \(x\) that
satisfies~\eqref{eqn:ccbc-constr} can maintain the desired coverage level of
\emph{all} targets with high probability. Chance
constraint~\eqref{eqn:ccbc-constr} is called single if \(I = 1\) and joint if
\(I \geq 2\).

In reality, our knowledge on \(\Probt{}\) is usually ambiguous. For
example, the historical data of \(\txi\) may be limited because of the
infrequency of natural disasters. Due to such ambiguity on
\(\Probt{}\), we adopt a distributionally robust
perspective. Specifically, we assume the access to a set of \(N\)
independent and identically distributed (\iid{}) samples drawn from
\(\Probt{}\), denoted as \(\{\hat{\xi}^{j}\}_{j \in [N]}\). This gives
rise to an empirical distribution
\(\hProb{}_{\tilde{\xi}} := (1/N)\sum_{j=1}^N \Delta_{\hat{\xi}^{j}}\)
and provides an approximation to \(\Probt{}\), \edits{where
  $\Delta_{\hat{\xi}^{j}}$ denotes the Dirac measure on the singleton
  $\{\hat{\xi}^{j}\}$}. Unfortunately, as we will demonstrate in
Section \ref{sec:exps-oos}, simply replacing \(\Probt{}\) with
\(\hProb{}_{\tilde{\xi}}\) in chance
constraint~\eqref{eqn:ccbc-constr} may not produce a feasible
solution. That is, a solution to such empirical approximation has low
confidence of satisfying the actual chance constraint
\eqref{eqn:ccbc-constr} under \(\Probt{}\). As an alternative,
distributionally robust approaches consider all distributions lying in
a neighborhood of \(\hProb{}_{\tilde{\xi}}\). We denote this
neighborhood as an ambiguity set \(\mathcal{P}\), which can be defined
based on various discrepancy measures between probability
distributions, e.g., Hellinger distance, Prokhorov metric, Wasserstein
distance, etc. In this paper, we adopt the Wasserstein distance (see,
e.g.,~\cite{mohajerin-2018-data-driven}), which measures the
discrepancy between distributions \(\Prob{}_1\) and \(\Prob{}_2\) by
\begin{gather*}
  d_W(\Prob{}_1, \Prob{}_2)
  := \inf_{\ProbQ{} \sim (\Prob{}_1, \Prob{}_2)} \Expect{}_{\ProbQ{}} \left[
    \norm{\tilde{X}_1 - \tilde{X}_2}_p \right],
\end{gather*}
where \(\tilde{X}_1\), \(\tilde{X}_2\) are random variables following
distributions \(\Prob{}_1\), \(\Prob{}_2\) respectively,
\(\ProbQ{} \sim (\Prob{}_1, \Prob{}_2)\) denotes that \(\ProbQ{}\) is a joint
distribution of \(\tilde{X}_1\) and \(\tilde{X}_2\) with marginals
\(\Prob{}_{1}\) and \(\Prob{}_{2}\), and \(\norm{\cdot}_p\) denotes the
\(p\)-norm. Intuitively, \(\ProbQ{}\) is a plan of transporting the probability
masses of \(\Prob{}_1\) to make it coincide with \(\Prob{}_2\), and
\(d_W(\Prob{}_1, \Prob{}_2)\) equals the minimum transportation cost evaluated
under the \(p\)-norm. Accordingly, we consider the following Wasserstein
ambiguity set:
\begin{gather}
  \mathcal{P}
  := \Set{
    \Prob \in \mathcal{P}_0(\binaries^{I \times n}) \colon
    d_W(\Prob, \hProb{}_{\tilde{\xi}}) \leq \delta}, \label{eq:intro-wass-ambiguity}
\end{gather}
where \(\mathcal{P}_0(\binaries^{I \times n})\) denotes the set of all
distributions supported on \(\binaries^{I \times n}\) and \(\delta > 0\) denotes
the radius of the ambiguity set. This leads to the following generalized
distributionally robust chance-constrained set covering problem:
\begin{flalign}
  \ltag{\textbf{\DRCCbC{}}} && \min_x ~~
  & c^\top x, && \nonumber{} \\
  && \st{} ~~
  & \inf_{\Prob \in \mathcal{P}} \Prob \Set{ x^{\top} \txi_i \geq v_i , \forall i \in [I] }
    \geq 1 - \epsilon, && \label{eqn:drccbc-constr} \\
  && & x \in \binaries^n, && \nonumber{}
\end{flalign}
where \(c \in \reals^n\) denotes a deterministic cost vector and constraint
\eqref{eqn:drccbc-constr} requires to satisfy the chance constraint under all
distributions within \(\mathcal{P}\). If \(\delta\) is sufficiently large that
\(\Probt{} \in \mathcal{P}\) then any feasible solution \(x\) to (\DRCCbC{})
satisfies the ``true'' chance constraint \eqref{eqn:ccbc-constr}. In a
data-driven context, it has been shown that as the data size \(N\) increases,
the confidence of \(\Probt{} \in \mathcal{P}\) increases to one exponentially
fast (see, e.g.,~\cite{fournier-2014-rate-conver}
and~\cite{gibbs-2002-choos-bound}).

We remark that the Wasserstein distance is equivalent to various other
discrepancy measures, including Hellinger distance and Prokhorov metric. For
example, one can bound the Hellinger distance between \(\Prob\) and
\(\hProb{}_{\tilde{\xi}}\), from both above and below, by
\(d_W(\Prob, \hProb{}_{\tilde{\xi}})\) (see, e.g.,~\cite{gibbs-2002-choos-bound}).
This implies that we can employ constraint~\eqref{eqn:drccbc-constr} to
approximate distributionally robust chance constraints defined under other
discrepancy measures.


\subsection{Literature Review}%
\label{sec:intro-review}

Chance-constrained programming (\CCP{}) arises in a wide range of applications
including power systems~\cite{zhang-2010-probab-analy},
transportation~\cite{chen-2004-stoch-trans}, facility
location~\cite{miranda-2006-simul-inven}, and wireless
communication~\cite{hsiung-2005-power}. Dating back
to~\cite{charnes-1958-cost-horiz, miller-1965-chanc-const,
  prekopa-1970-on-proba}, \CCP{}s are considered very challenging to solve
because
\begin{enumerate*}
\item[(i)] checking the feasibility of a given solution \(x\) demands
multivariate integral, which is difficult to calculate, and
\item[(ii)] \CCP{} produces a nonconvex and even disconnected feasible region in general.
\end{enumerate*}
To mitigate this challenge, prior work has studied convex conservative
approximation~\cite{nemirovski-2007-convex-approx, ben-tal-2009-robus} and
sample average approximation
(\SAA{})~\cite{luedtke-2008-sampl-approx,calafiore-2004-uncer-convex-progr},
both of which can efficiently search for feasible solutions with guarantee.
Nevertheless, the former may remain challenging to solve if decision variables
are discrete (like in this paper), and the latter often demands the capability
of drawing as many samples from \(\Probt{}\) as needed. In addition,
distributionally robust chance-constrained models (\DRCC{}) have received
increasing attention in recent
years~\cite{ghaoui-2003-worst-case,calafiore-2006-distr-robus,vandenberghe-2007-gener-cheby,nemirovski-2007-convex-approx,zymler-2011-distr-robus,xu-2012-optim-under,jiang-2015-data-driven,duan-2018-distr-robus,chen-2018-data-driven,yang-2018-wasser-distr,li-2019-ambig-risk}.
There are many successful developments on the tractability of single and joint
chance constraints with moment ambiguity sets, which characterize
\(\mathcal{P}\) based on moment information of
\(\Probt{}\)~\cite{calafiore-2006-distr-robus,zymler-2011-distr-robus,yang-2014-distr-robus,hanasusanto-2015-distr-robus,xie-2016-deter-refor,hanasusanto-2017-ambig-joint,li-2019-ambig-risk}.
Nevertheless, moment ambiguity sets become more conservative than their
counterparts based on discrepancy measures (e.g., a Wasserstein ambiguity set)
when more data samples are available. On the other hand, \DRCC{} with a
Wasserstein ambiguity set is not polynomially solvable in
general~\cite{xie-2020-bicrit-approx}. Prior work has developed exact
reformulations and valid
inequalities~\cite{xie-2019-distr-robus,ji-2018-data-driven,chen-2018-data-driven,ho-nguyen-2020-distr-robus,ho-nguyen-2020-stron-formul}
for solving this model. For
example,~\cite{xie-2019-distr-robus,ji-2018-data-driven,chen-2018-data-driven}
derived mixed-integer linear or conic reformulations. When decision variables
are purely binary,~\cite{xie-2019-distr-robus} exploited the submodularity of
the reformulation to produce extended polymatroid inequalities. In
addition,~\cite{ji-2018-data-driven} derived precedence valid inequalities among
the binary variables they employed to indicate constraint satisfaction in each
scenario.~\cite{ho-nguyen-2020-distr-robus}
and~\cite{ho-nguyen-2020-stron-formul} focused on \DRCC{} with right-hand-side
(\RHS{}) and left-hand-side (\LHS{}) Wasserstein ambiguity, respectively.
By exploring the connection between \DRCC{} and the \SAA{}
  formulation of \CCP{}, they employed the mixing scheme to produce
  valid inequalities (see,
  e.g,~\cite{guenluek-2001-mixin-mixed,kuecuekyavuz-2012-mixin-sets,luedtke-2014-branc-and}).
Nevertheless, prior work has paid less attention to problems with binary
decision variables and discrete uncertainty. In this paper, we study joint
\DRCC{} with \LHS{} Wasserstein ambiguity in generalized set
covering, where both decision and random variables are purely binary.

Chance-constrained integer
programs~\cite{beraldi-2009-exact-approac,song-2013-branc-and,luedtke-2014-branc-and,song-2014-chanc-const,wu-2019-probab-partial,wang-2019-solut-approac}
are stochastic variants of combinatorial optimization problems when uncertainty
arises. In~\cite{luedtke-2014-branc-and}, the authors developed a general
decomposition framework for solving chance-constrained programs and derived
strong mixed valid inequalities by combining ``base'' inequalities from each
scenario. In addition,~\cite{song-2014-chanc-const} proposed an efficient
coefficient strengthening method and lifted probabilistic cover inequalities for
chance-constrained bin packing problems.
Recently,~\cite{wang-2019-solut-approac} studied a chance-constrained assignment
problem and its \DRCC{} variant, for which they derived strong lifted cover
inequalities with efficient separation heuristics. In this work, we exploit the
special structures of set covering and Wasserstein ambiguity to derive two
families of valid inequalities. The first family produces the convex hull of the
hypograph of a ``shifted'' submodular function. To the best of our knowledge,
such a convex hull was not discovered in prior work. The second family
mixes inequalities across multiple scenarios, bearing a resemblance to the
derivation in~\cite{luedtke-2014-branc-and}. Nevertheless, we take advantage of
the binary nature of our decision and random variables and strengthen the mixed
inequalities further via lifting.

Chance-constrained set covering models fall into two main categories. In the
first category, uncertainty arises on the \RHS{}~\cite{beraldi-2002-probab-set,
  saxena-2010-mip-refor}. In~\cite{beraldi-2002-probab-set}, the authors
developed a specialized branch-and-bound algorithm based on the enumeration of
\(p\)-efficient points, which were initially introduced
by~\cite{prekopa-1990-dual-method}. Later,~\cite{saxena-2010-mip-refor}
simplified the enumeration approach and derived polarity cuts to improve the
computational performance. The second category models \LHS{}
uncertainty~\cite{fischetti-2012-cuttin-plane,ahmed-2013-probab-set,wu-2019-probab-partial},
as in this paper. For example,~\cite{fischetti-2012-cuttin-plane} studied single
chance constraints, where all components of the Bernoulli random vector
\(\txi_i\) are independent, and developed efficient cutting plane approaches. In
addition,~\cite{wu-2019-probab-partial} proposed an exact approach for solving
\CCP{}s when there exists an oracle to retrieve the probability of any events
under \(\Probt{}\). They demonstrated this approach on chance-constrained
partial set covering problems with either independence or linear threshold
assumptions. To the best of our knowledge,~\cite{ahmed-2013-probab-set} is the
only prior work on \DRCC{} for set covering, but they studied single chance
constraints under a moment ambiguity set. They derived a compact equivalent
reformulation and exploited its supermodularity to derive strong valid
inequalities. Different from~\cite{ahmed-2013-probab-set}, we study joint
\DRCC{} under Wasserstein ambiguity. In addition, the supermodularity of their
reformulation stems from the correlation among \(\txi_{ij}\)'s, while our
``shifted'' submodular function is a result of the generalized set covering and the Wasserstein ambiguity.

\subsection{Contributions}%
\label{sec:intro-contrib}

We derive exact reformulations and two families of valid inequalities for (\DRCCbC{}). Our main contributions include
\begin{enumerate}
\item We establish the NP-hardness of (\DRCCbC{}) and show that (\DRCCbC{}) admits a
    deterministic two-stage reformulation, which can be solved efficiently by
    decomposition algorithms.
\item We derive a complete description of the convex hull of a basic
    mixed-integer set, which stems from the hypograph of a ``shifted''
    submodular function and arises in each scenario of the two-stage
    reformulation.
\item We derive a family of cross-scenario inequalities by lifting the mixed
    valid inequalities obtained from multiple scenarios.
\item We conduct extensive numerical experiments to demonstrate (i) the
      out-of-sample performance of (\DRCCbC{}) and (ii) the effectiveness of our
      two-stage reformulation and valid inequalities.
\end{enumerate}

The rest of this paper is organized as follows. Section~\ref{sec:reform} shows
the NP-hardness and develops an exact two-stage reformulation for (\DRCCbC{}).
Section~\ref{sec:valid_ineqs} derives the single- and cross-scenario valid
inequalities. \edits{Section~\ref{sec:extension-knaps} extends the reformulation and valid inequalities to a distributionally robust knapsack chance constraint.} Finally, Section~\ref{sec:exps} demonstrates the effectiveness of
the proposed model and solution approaches.

\textit{Notation:} \(\mathbb{Z}_+\) denotes the set of nonnegative integers. For
integers \(m\) and \(n\), \([n] := \set{1, \ldots, n}\), \(\ones_{m\times n}\)
denotes an \(m\times n\) matrix of all ones, \(\ones\) denotes a vector of
all ones with suitable dimension, and \(\basevec_m\) denotes the \(m\)th standard basis vector with suitable dimension. For \(x \in \reals\),
\(\posp{x} := \max \set{x, 0}\).
For set \(E\), \(\abs{E}\) denotes its cardinality, \(\Co(E)\) denotes its convex hull, and the indicator function
\(\Ind{x \in E} := 1\) if \( x \in E\) and \(\Ind{x \in E} := 0\) otherwise. 


\section{Two-Stage Reformulation}%
\label{sec:reform}
We derive a reformulation of (\DRCCbC{}) based on the conditional value-at-risk (\CVaR{}) in Section \ref{sec:reform-cvar} and show its NP-hardness in Section \ref{sec:reform-nphard}. Then, in Section \ref{sec:reform-two-stage}, we further recast it as a two-stage stochastic program.

\subsection{Conditional Value-at-Risk Reformulation}%
\label{sec:reform-cvar}


We recall the definitions of value-at-risk
(\VaR{}) and \CVaR{}~\cite{rockafellar-1999-optim-condit}. For a random variable
\(\tilde{X}\) following its induced probability distribution
\(\Prob{}_{\tilde{X}}\), the \((1 - \epsilon)\)-\VaR{} of \(\tilde{X}\) is
defined as
\begin{gather*}
\VaR_{1 - \epsilon} (\tilde{X}) := \inf
\Set{x : \Prob{}_{\tilde{X}} \set{ \tilde{X} \leq x }\geq 1 - \epsilon},
\end{gather*}
and its \((1 - \epsilon)\)-\CVaR{} is defined as:
\begin{align}
\CVaR_{1 - \epsilon} (\tilde{X}) =
  \min_{\gamma} \Set{\gamma + \frac{1}{\epsilon} \Expect{} \posp{\tilde{X} - \gamma} }.
  \label{eqn:cvar-def}
\end{align}

Previously,~\cite{xie-2019-distr-robus} derived a \CVaR{} reformulation for \DRCC{} when \(\txi\) is supported in a normed vector space. Since our \(\txi\) is binary-valued, we adapt the framework of~\cite{xie-2019-distr-robus} to obtain a slightly different reformulation, which also admits a \CVaR{} interpretation.

\begin{proposition}[Adapted from Theorem~\(1\) in~\cite{xie-2019-distr-robus}]%
\label{prop:reform-mono}
Let \(Z\) represent the feasible region produced by constraint \eqref{eqn:drccbc-constr} in (\DRCCbC{}). Then, it holds that
\begin{align}
  Z = \Set{x \in \binaries^n \colon
  \begin{aligned}
    & \exists \ \gamma \in \reals_+, z \in \reals^N_-:\\
    & \delta - \gamma \epsilon \leq \frac{1}{N} \sum_{j \in [N]} z_j, \\
    & z_j + \gamma \leq \min_{i \in [I]} \left( \posp{x^{\top} \hat{\xi}^j_i - v_i + 1} \right)^{1/p},
    \forall j \in [N]
  \end{aligned}}.\label{eqn:reform-mono}
\end{align}
In addition, \(Z\) admits the following \CVaR{} interpretation:
\begin{gather}
Z = \Set{x \in \binaries{}^{n}: \frac{\delta}{\epsilon} + \cvar{}_{1 - \epsilon}
  \left[ - g(x, \hat{\xi} ) \right] \leq 0 }, \label{eqn:reform-cvar}  \\
  \mathrm{where} \quad g(x, \xi) = \min_{i \in [I]} \left( \posp{x^{\top} \xi_i - v_i + 1} \right)^{1/p}. \nonumber{}
\end{gather}
\end{proposition}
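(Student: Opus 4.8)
The plan is to derive the two characterizations of $Z$ by starting from the definition of the distributionally robust chance constraint \eqref{eqn:drccbc-constr} and working through a sequence of equivalent reformulations, following the general strategy of \cite{xie-2019-distr-robus} but adapting each step to the binary setting.

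**First**, I would rewrite the worst-case probability constraint $\inf_{\Prob \in \mathcal{P}} \Prob\{x^\top \txi_i \ge v_i,\ \forall i \in [I]\} \ge 1-\epsilon$ in terms of a worst-case expectation. Since $\txi$ is binary-valued and decisions are binary, the event $\{x^\top \txi_i \ge v_i,\ \forall i\}$ is equivalent to $\{g(x,\txi) \ge 1\}$, where $g(x,\xi) = \min_{i\in[I]} (\posp{x^\top\xi_i - v_i + 1})^{1/p}$: indeed $x^\top\xi_i \ge v_i$ iff $x^\top\xi_i - v_i + 1 \ge 1$ iff $\posp{x^\top\xi_i - v_i + 1} \ge 1$ (using integrality), and the $p$-th root is monotone. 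Hence the chance constraint says $\inf_{\Prob\in\mathcal{P}} \Prob\{g(x,\txi) \ge 1\} \ge 1-\epsilon$, i.e. $\sup_{\Prob\in\mathcal{P}} \Prob\{g(x,\txi) < 1\} \le \epsilon$. The standard step is now to pass to a \CVaR\ upper bound on the chance (indicator) constraint that is in fact \emph{exact} here because the random variable $g(x,\txi)$ takes values in a discrete set whose only relevant threshold is $1$ --- this is where the $(\cdot)^{1/p}$ appears: it makes the distance-to-violation consistent with the $p$-norm used in $d_W$. Concretely, $\sup_{\Prob\in\mathcal{P}} \Prob\{g < 1\} \le \epsilon$ is equivalent to $\sup_{\Prob\in\mathcal{P}} \CVaR^{\Prob}_{1-\epsilon}[-g(x,\txi)] \le 0$ together with the worst-case distance penalty $\delta/\epsilon$; I would cite/adapt the corresponding lemma of \cite{xie-2019-distr-robus} for the Wasserstein worst-case \CVaR.

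**Next**, I would evaluate the worst-case \CVaR\ over the Wasserstein ball explicitly. Using the dual representation of $\CVaR$ from \eqref{eqn:cvar-def} and the known strong-duality formula for worst-case expectations under a Wasserstein ambiguity set around the empirical distribution $\hProb{}_{\tilde\xi} = (1/N)\sum_j \Delta_{\hat\xi^j}$, the inner supremum collapses to a finite-dimensional expression involving, for each sample $\hat\xi^j$, the quantity $\inf_{\xi\in\binaries^{I\times n}} \{ \norm{\xi - \hat\xi^j}_p / \text{(something)} + \dots\}$ --- but because $g(x,\cdot)$ is itself built from $\posp{\cdot}^{1/p}$ against the threshold, the relevant Lipschitz-type modulus of $-g(x,\cdot)$ with respect to $\norm{\cdot}_p$ is exactly $1$. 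Carrying out the minimization over $\gamma$ in the \CVaR\ definition last, I obtain that $x\in Z$ iff there exist $\gamma \ge 0$ and $z\in\reals^N$ with $z_j \le \min_{i\in[I]}(\posp{x^\top\hat\xi^j_i - v_i + 1})^{1/p} - \gamma$ and $\delta - \gamma\epsilon \le \frac1N\sum_j z_j$; the sign restrictions $\gamma\in\reals_+$ and $z\in\reals^N_-$ come from, respectively, $\gamma$ being a \VaR-type variable that may be taken nonnegative since $g\ge 0$, and from the fact that tightening $z_j$ downward to $\le 0$ never hurts feasibility (one can always clip at $0$). This yields \eqref{eqn:reform-mono}. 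The \CVaR\ interpretation \eqref{eqn:reform-cvar} then follows by reversing the $\gamma$-minimization: $\frac{\delta}{\epsilon} + \CVaR_{1-\epsilon}[-g(x,\hat\xi)] = \frac{\delta}{\epsilon} + \min_\gamma\{\gamma + \frac1{N\epsilon}\sum_j\posp{-g(x,\hat\xi^j)-\gamma}\} \le 0$ is precisely the projected-out form of the system in \eqref{eqn:reform-mono}.

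**The main obstacle** I anticipate is justifying that the worst-case \CVaR\ over the Wasserstein ball has the clean closed form above in the \emph{binary} (finitely supported, non-normed-vector-space) setting, since the reformulation of \cite{xie-2019-distr-robus} is stated for $\txi$ in a normed vector space; one must check that restricting the support to $\binaries^{I\times n}$ does not change the worst-case value --- intuitively it does not, because the worst-case transport plan can be taken to move mass to points achieving $g(x,\cdot)$'s threshold crossing, and such points are themselves binary, but this needs a short argument (e.g. an extreme-point / support-reduction argument for the inner LP). A secondary, more routine, point to get right is the exactness of the \CVaR\ bound for the chance constraint: for a general random variable $\CVaR_{1-\epsilon}[-g] \le 0$ only implies $\Prob\{g < 1\}$-type bounds conservatively via $-g \le 0$ thresholding, but here because $g(x,\txi)\in\{0\}\cup[1,\infty)$ (as $x^\top\txi_i - v_i + 1$ is integer-valued), the $\posp{\cdot}$ "rounds" the violation to $0$ or $\ge 1$, so the \CVaR\ reformulation of the indicator is tight; I would state this integrality observation explicitly as the first line of the proof. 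After these two points are settled, the remaining algebra --- rearranging the $\gamma$ and $z$ variables, and checking the sign domains --- is straightforward.
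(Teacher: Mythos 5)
Your plan follows essentially the same route as the paper: rewrite constraint \eqref{eqn:drccbc-constr} via the complement event, apply Wasserstein strong duality (the paper invokes Theorem 1 of \cite{gao-2016-distr-robus} directly rather than a worst-case-CVaR lemma of \cite{xie-2019-distr-robus}), reduce everything to the $p$-norm distance from each sample $\hat{\xi}^j$ to the violation set $\{\xi \in \binaries^{I\times n}: \exists i,\ x^{\top}\xi_i \leq v_i - 1\}$, and then the change of variables $\gamma = 1/\lambda$, the introduction of $z_j$, and the recognition of the CVaR formula \eqref{eqn:cvar-def} are routine, as you say.

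The genuine gap is the step you yourself flag as ``the main obstacle'' and then defer to ``a short argument'': proving that this distance, computed \emph{within the binary cube}, equals $g(x,\hat{\xi}^j) = \min_{i}\bigl(\posp{x^{\top}\hat{\xi}^j_i - v_i + 1}\bigr)^{1/p}$. That identity is the only content of the proposition that is specific to the binary set covering setting --- it is exactly why the reformulation carries the exponent $1/p$ instead of the halfspace distance $\posp{x^{\top}\hat{\xi}^j_i - v_i + 1}$ scaled by a dual norm of $x$ that the normed-vector-space formula of \cite{xie-2019-distr-robus} would give --- so a write-up that leaves it to intuition has not proved the statement. The paper closes it in a few lines: split according to whether $x^{\top}\hat{\xi}^j_i \leq v_i - 1$ (then the sample itself violates and the inner term is $-1$); otherwise, in $\min_{x^{\top}\xi_i \leq v_i - 1}\norm{\xi - \hat{\xi}^j}_p$ only coordinates $k \in \mathcal{T} = \{k: x_k = 1\}$ matter (coordinates outside $\mathcal{T}$ can be kept equal to $\hat{\xi}^j_{ik}$ at no cost), and within $\mathcal{T}$ one must flip exactly $\posp{x^{\top}\hat{\xi}^j_i - v_i + 1}$ entries from $1$ to $0$, giving distance $\bigl(\posp{x^{\top}\hat{\xi}^j_i - v_i + 1}\bigr)^{1/p}$; minimizing over $i$ yields $g$. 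Your ``Lipschitz modulus equals $1$'' remark is the right intuition but not a substitute for this computation. Two smaller inaccuracies to fix: the exactness of the CVaR form does not come from integrality of $g$ (the equivalence is exact for general distance functions by the duality argument; integrality only makes the distance evaluable in closed form), and your first step conflates a worst-case CVaR over $\mathcal{P}$ with the final statement \eqref{eqn:reform-cvar}, in which the CVaR is taken under the \emph{empirical} distribution and the radius enters only through the additive penalty $\delta/\epsilon$; as written, evaluating a worst-case CVaR after already invoking the CVaR equivalence would be redundant or circular.
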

%

\begin{proof}[Proof (see Theorem~\(1\) in~\cite{xie-2019-distr-robus}
and Theorem~\(3\) in~\cite{blanchet-2017-quant-distr})] We first
rewrite constraint~\eqref{eqn:drccbc-constr} using complement:
\begin{align*}
  \sup_{\Prob{} \in \mathcal{P}} \Prob{} \Set{\exists \, i \in [I]: \, x^{\top} \txi_i \leq v_i - 1} \leq \epsilon.
\end{align*}
\editsII{By Theorem~\(3\) in~\cite{blanchet-2017-quant-distr}, we expand the above supremum to obtain
\begin{align*}
  \sup_{\Prob \in \mathcal{P}} \Prob \left\{ \exists \, i \in [I] \colon x^{\top} \tilde{\xi}_i \leq v_i - 1 \right\}
  = \min_{\lambda \geq 0} \Set{ \lambda \delta + \frac{1}{N} \sum_{j \in [N]} \Big(1 - \lambda \cdot g(x, \hat{\xi}^j)\Big)^+ },
\end{align*}
where
\begin{align*}
  g(x, \hat{\xi}^j)
  & := \inf_{\xi \in \Xi} \Set{ \norm{\xi - \hat{\xi}^j}_p \colon \exists \, i \in [I] ~\st{}~ x^{\top} \xi_i \leq v_i - 1 } \\
  & = \min_{i \in [I]} \inf_{\xi \in \Xi} \Set{ \norm{\xi - \hat{\xi}^j}_p \colon x^{\top} \xi_i \leq v_i - 1 }.
\end{align*}
Then, the \textbf{CVaR} interpretation  follows from Corollary~\(1\) in~\cite{xie-2019-distr-robus}. In what follows, we recast \(g(x, \hat{\xi}^j)\). To this end,} let \(\mathcal{T}\) be the
index set of \(x\), i.e., \(\mathcal{T} := \{k \in [n]: x_k = 1\}\). For any
\(i \in [I]\), we have
\begin{align*}
  \min_{x^{\top} \xi_i \leq v_i - 1} \norm{\xi - \hat{\xi}^j}_p
   & = \min_{x^{\top} \xi_i \leq v_i - 1} \left(
   \sum_{k = 1}^n \abs[\Big]{\xi_{ik} - \hat{\xi}^j_{ik}}^p \right)^{1/p} \\
  & = \min_{x^{\top} \xi_i \leq v_i - 1} \left(
    \sum_{k \not \in \mathcal{T}} \abs[\Big]{\xi_{ik} - \hat{\xi}^j_{ik}}^p +
    \sum_{k \in \mathcal{T}} \abs[\Big]{\xi_{ik} - \hat{\xi}^j_{ik}}^p \right)^{1/p} \\
  & = \min_{x^{\top} \xi_i \leq v_i - 1} \left(
    \sum_{k \in \mathcal{T}} \abs[\Big]{\xi_{ik} - \hat{\xi}^j_{ik}}^p \right)^{1/p} \\
  & = \left( \posp{x^{\top} \hat{\xi}^j_i - (v_i - 1)} \right)^{1/p}.
\end{align*}
\editsII{It follows that \(\displaystyle g(x, \hat{\xi}^j) = \min_{i \in [I]}\big((x^{\top}\hat{\xi}^j_i - v_i + 1)^+\big)^{1/p}\). This completes the proof.
}
\end{proof}
%
\edits{Although (\DRCCbC{}) admits the same \CVaR{} interpretation as
  in~\citet{xie-2019-distr-robus}, the reformulation $Z$ of
  (\DRCCbC{}) is inherently different from that
  of~\cite{xie-2019-distr-robus}. This is because the uncertain
  parameters $\tilde{\xi}$ are binary-valued in (\DRCCbC{}) while
  those in~\cite{xie-2019-distr-robus} are supported in a vector
  space. As a result, projecting a vector onto a half-space, which is
  a key step of reformulating Wasserstein chance constraints, differs
  in discrete and vector spaces. Specifically, in the proof of
  Proposition~\ref{prop:reform-mono},
\begin{equation*}
\min_{x^{\top} \xi_i \leq v_i - 1} \|\xi - \hat{\xi}^j\|_p \ = \
\left\{\begin{array}{ll}
\left((x^{\top}\hat{\xi}^j_i-v_i+1)^+\right)^{1/p} & \mbox{if $\Xi = \{0, 1\}^n$ (as in the proof of Proposition~\ref{prop:reform-mono}),} \\[0.5cm]
\frac{(x^{\top}\hat{\xi}^j_i - v_i + 1)^+}{\|x\|_{p/(p-1)}} & \mbox{if $\Xi = \mathbb{R}^n$ (as in~\cite{xie-2019-distr-robus})}.
\end{array}\right.
\end{equation*}
}
{\color{black}
\begin{remark}
The reformulation of \(Z\) in~\eqref{eqn:reform-mono} is of independent interest because it remains valid beyond the current setting of set covering. For example, (\DRCCbC) admits the same reformulation when \(x \in \binaries^n\), \(\Xi = \integers^{I\times n}_+\), and \(p = 1\). In addition, the reformulation remains valid when some target cannot be covered by certain elements. Formally, suppose that \(\xi\) is supported on the set
\[
\left\{\xi \in \binaries^{I \times n}: \xi_{ik} = 0 \quad \forall (i, k) \in S\right\}
\]
for a subset \(S \subseteq [I]\times [n]\), and accordingly \(\hat{\xi}^j_{ik} = 0\) for all \((i,k) \in S\). Then, the reformulation of \(Z\) in~\eqref{eqn:reform-mono} remains valid.
\end{remark}
}


\subsection{NP-hardness}%
\label{sec:reform-nphard}
%

%
%
We establish the NP-hardness of solving (\DRCCbC{}) in the following proposition.
\edits{
\begin{proposition}
\label{prop:reform-nphard}
(\DRCCbC{}) is NP-hard to solve for any given fixed risk level
\(\epsilon \in (0, 1)\).
\end{proposition}
\begin{proof}
We show that (\DRCCbC{}) is equivalent to the following problem when
\(N = 1\) and \(v_i = 1\) for all \(i \in [I]\):
\begin{align*}
  \min_{x} ~~
  & c^{\top} x, \tag{DRC'}\\
  \st ~~
  & \frac{\delta}{\epsilon}
    \leq \min_{i \in [I]} \left( x^{\top} \hat{\xi}^1_i \right)^{1/p}, \\
  & x \in \binaries^n.
\end{align*}
Let \((x_1, \gamma_1, z_1) \in Z\) be a feasible solution to (\DRCCbC{}), then
\(x_1\) is feasible to (\DRCCbC{}') because
\begin{align*}
  \frac{\delta}{\epsilon}
  \leq \frac{z_1}{\epsilon} + \gamma_1 \leq z_1 + \gamma_1
  \leq \min_{i \in [I]} \left( x^{\top} \hat{\xi}^1_i \right)^{1/p}.
\end{align*}
On the other hand, if \(x_2\) is feasible to (\DRCCbC{}'), then
together with \(\gamma_2 := \delta / \epsilon, z_2 := 0\) they are feasible to (\DRCCbC{}) because
\begin{align*}
  \delta - \gamma_2 \epsilon
  & = 0 \leq z_2, \\
  z_2 + \gamma_2
  & = \delta / \epsilon
    \leq \min_{i \in [I]} \left( x^{\top}_2 \hat{\xi}^1_i \right)^{1/p}.
\end{align*}
Therefore, (\DRCCbC{}) and (\DRCCbC{}') are equivalent. Since restricting
\(\delta\) to be \(\epsilon\) in (\DRCCbC{}') recovers the set cover problem, we
conclude that (\DRCCbC{}) is NP-hard to solve.
\end{proof}
One may be tempted to obtain the convex hull of the following mixed-integer set $\mathcal{Q}$ arising from the reformulation~\eqref{eqn:reform-mono},
\begin{align*}
  \mathcal{Q} := \Set{
  (\theta, x) \in \reals \times \binaries^n \colon
  \theta \leq \min_{i \in [I]} \left( x^{\top} \xi_i \right)^{1/p}}
\end{align*}
for given $\xi_i \in \binaries{^n}$, $i \in [I]$. Nonetheless, we show that optimizing a linear function over \(\mathcal{Q}\) is
NP-hard as well. The proof of this proposition is given in Appendix~\ref{appendix:np-hard-sep-pf}.
\begin{proposition}\label{reform:np-hard-sep}
The following problem is NP-hard to solve for any given fixed \(p \in \reals, p \geq 1\):
\begin{align}
  \min_{x, \theta} ~~
  & c^\top x - \theta^{1/p} \label{eqn:reform-nphard} \\
  \st{} ~~
  & \theta \leq x^{\top} \xi_i, \forall i \in [I], \nonumber{} \\
  & \theta \in \reals, x \in \binaries^n. \nonumber{}
\end{align}
\end{proposition}
}

\subsection{Two-Stage Reformulation}%
\label{sec:reform-two-stage}
The reformulation \eqref{eqn:reform-mono} of \(Z\) is potentially
challenging to optimize over, particularly because function
\(g(x, \xi)\) \editsII{defined in Proposition~\ref{prop:reform-mono}} is non-concave in \(x\).
\edits{Although
  one can linearize $g$ with the help of auxiliary binary variables
  and big-M coefficients, the formulation thus obtained is
  computationally ineffective. \editsII{Another linearized formulation without big-M coefficients can be obtained by following~\cite{xie-2019-distr-robus} if \(g(x, \xi)\) is supermodular in \(x\). Unfortunately, this is not the case even when \(I = 1\) (see Section~\ref{sec:valid_ineqs_single}).} As an alternative, we exploit the (hidden) submodularity of $g$ to obtain a two-stage reformulation without
  additional binary variables or big-M coefficients.} We review key
concepts of submodularity in Section \ref{sec:reform-poly} and present
the reformulation in Section \ref{sec:reform-linearize}.

\subsubsection{Polyhedral Results for Submodular Functions}
\label{sec:reform-poly}

\begin{definition}
A function \(\phi \colon 2^{[n]} \to \reals\) is submodular if for any \(\mathcal{R}, \mathcal{T} \subseteq [n]\), we have
\begin{align*}
  \phi(\mathcal{R}) + \phi(\mathcal{T})
  \geq \phi(\mathcal{R} \cup \mathcal{T}) + \phi(\mathcal{R} \cap \mathcal{T}).
\end{align*}
In addition, \(\phi\) is supermodular if \(-\phi\) is submodular.\qed
\end{definition}
For ease of exposition, we use \(\phi(\mathcal{T})\) and \(\phi(x_{\mathcal{T}})\) interchangeably, where \(x_{\mathcal{T}} \in \binaries^n\) is the indicating vector of \(\mathcal{T}\) such that \(x_k = 1\) if and only if \(k \in \mathcal{T}\). An example of submodular function follows.
\begin{lemma}[\cite{topkis-1978-minim-submod,xie-2019-distr-robus}]%
\label{lem:reform-neg-posp-submod}
For fixed \(\alpha \in \reals^n_+, \alpha_o \in \reals\) function
\(\phi(x) := \min\{-\alpha^{\top} x + \alpha_o, 0\}\) is submodular.
\end{lemma}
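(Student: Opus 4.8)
The plan is to reduce the claimed submodularity to the elementary concavity of the univariate function $g(t):=\min\{t,0\}$, which is concave as the pointwise minimum of two affine functions. First I would write the set function out explicitly: for $\mathcal{T}\subseteq[n]$ we have $\phi(\mathcal{T})=\phi(x_{\mathcal{T}})=g\bigl(\alpha_o-\alpha(\mathcal{T})\bigr)$, where $\alpha(\mathcal{T}):=\sum_{k\in\mathcal{T}}\alpha_k$. Since $\alpha\in\reals^n_+$, the map $\mathcal{T}\mapsto\alpha(\mathcal{T})$ is modular with nonnegative increments, so $\phi$ is the composition of the concave $g$ with a modular inner function — the classical source of submodularity (cf.~\cite{topkis-1978-minim-submod}). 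The remaining work is just to make this self-contained.

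To do so, fix arbitrary $\mathcal{R},\mathcal{T}\subseteq[n]$ and set $a:=\alpha(\mathcal{R}\setminus\mathcal{T})$, $b:=\alpha(\mathcal{T}\setminus\mathcal{R})$, and $c:=\alpha(\mathcal{R}\cap\mathcal{T})$; all three are nonnegative precisely because $\alpha\geq 0$. Then $\alpha(\mathcal{R})=a+c$, $\alpha(\mathcal{T})=b+c$, $\alpha(\mathcal{R}\cup\mathcal{T})=a+b+c$, and $\alpha(\mathcal{R}\cap\mathcal{T})=c$. Writing $u:=\alpha_o-a-b-c$, the submodular inequality $\phi(\mathcal{R})+\phi(\mathcal{T})\geq\phi(\mathcal{R}\cup\mathcal{T})+\phi(\mathcal{R}\cap\mathcal{T})$ becomes exactly
\[ g(u+a)+g(u+b)\;\geq\;g(u)+g(u+a+b). \]

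Finally I would close by observing that the two sides carry equal argument sums, $(u+a)+(u+b)=u+(u+a+b)$, while the two left-hand arguments both lie in the interval $[u,\,u+a+b]$ spanned by the two right-hand ones (using $a,b\geq 0$); expressing $u+a$ and $u+b$ as convex combinations of $u$ and $u+a+b$ and applying concavity of $g$ termwise (Jensen) gives the inequality. Equivalently, one notes that for $a\geq 0$ the map $t\mapsto g(t+a)-g(t)$ is nonincreasing in $t$ and evaluates it at $t=u$ and $t=u+b$. I do not anticipate a real obstacle here: the argument is short, and the only point needing care is the bookkeeping in the substitution, the essential content being that concavity is preserved under composition with a nonnegative modular function.
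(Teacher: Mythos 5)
Your proof is correct and is essentially the argument underlying the paper's treatment: the paper states this lemma without proof, citing Topkis (1978) and Xie (2019), and the standard fact invoked there is exactly the one you give — a concave univariate function composed with a nonnegative modular set function is submodular, verified via the decreasing-differences inequality \(g(u+a)+g(u+b)\geq g(u)+g(u+a+b)\) for concave \(g\) and \(a,b\geq 0\). Your substitution bookkeeping checks out (e.g.\ \(\phi(\mathcal{R})=g(u+b)\), \(\phi(\mathcal{T})=g(u+a)\), \(\phi(\mathcal{R}\cup\mathcal{T})=g(u)\), \(\phi(\mathcal{R}\cap\mathcal{T})=g(u+a+b)\)), so nothing is missing.
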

Next, consider the epigraph \(\epi(\phi)\) of a submodular function \(\phi\):
\begin{align*}
  \epi(\phi) := \Set{
  (\theta, x) \in \reals \times \binaries^n \colon \phi(x) \leq \theta}.
\end{align*}
Then, the convex hull of \(\epi(\phi)\) is fully described by the extended polymatroid inequalities (EPI)~\cite{schrijver-2003-combin}, i.e.,
\begin{align*}
  \Co{}(\epi(\phi))
  := \Set{(\theta, x) \in \reals \times [0, 1]^n:
  \phi(\varnothing) +
  \sum_{k = 1}^n \left[ \phi(\mathcal{T}_k) - \phi(\mathcal{T}_{k - 1}) \right] x_{\sigma_k}
  \leq \theta, \forall \sigma \in \Sigma},
\end{align*}
where \(\Sigma\) denotes all permutations of \([n]\) and
\(\mathcal{T}_k := \Set{\sigma_1, \ldots, \sigma_k}, \mathcal{T}_0 := \varnothing\).
The separation of \(\Co{}(\epi(\phi))\) is very efficient even though it
involves \(n!\) number of constraints.
\begin{theorem}[%
\edits{Proposition~\(1\) in~\cite{atamtuerk-2008-polym-mean}, %
Theorem~\(44.3\) in~\cite{schrijver-2003-combin}, %
Section~\(3\) in~\cite{lovasz-1983-submod-funct-convex}}
]\label{thm:reform-epi-sep}
If \((\hat{\theta}, \hat{x}) \not \in \Co{}(\epi(\phi))\), then it violates constraint
\begin{align*}
  \phi(\varnothing) + \sum_{k = 1}^n \left[
  \phi(\mathcal{T}_k) - \phi(\mathcal{T}_{k - 1}) \right] x_{\sigma_k} \leq \theta,
\end{align*}
where \(\sigma \in \Sigma\) is the permutation such that \(\hat{x}_{\sigma_1} \geq \hat{x}_{\sigma_2} \geq \cdots \geq \hat{x}_{\sigma_n}\).
\end{theorem}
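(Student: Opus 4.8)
The plan is to read the $n!$ extended polymatroid inequalities as lower bounds on $\theta$ and to prove that, among all permutations, the one that sorts $\hat x$ nonincreasingly produces the largest of those bounds; this is exactly Edmonds' greedy optimality specialized to $\epi(\phi)$. First assume $\hat x\in[0,1]^n$: otherwise $(\hat\theta,\hat x)$ already violates one of the box constraints in the description of $\Co(\epi(\phi))$ and there is nothing to prove. For $\sigma\in\Sigma$ define
\begin{align*}
  g(\sigma) := \phi(\varnothing) + \sum_{k=1}^n \bigl[\phi(\mathcal T_k)-\phi(\mathcal T_{k-1})\bigr]\hat x_{\sigma_k},
\end{align*}
with $\mathcal T_k:=\{\sigma_1,\dots,\sigma_k\}$, $\mathcal T_0:=\varnothing$ as above, so that $g(\sigma)$ is precisely the left-hand side of the EPI indexed by $\sigma$ evaluated at $\hat x$. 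Since $(\hat\theta,\hat x)\notin\Co(\epi(\phi))$, some EPI is violated, hence $\hat\theta<\max_{\sigma\in\Sigma}g(\sigma)$. It therefore suffices to show that any $\sigma^\star\in\Sigma$ with $\hat x_{\sigma^\star_1}\ge\cdots\ge\hat x_{\sigma^\star_n}$ attains $\max_{\sigma}g(\sigma)$; the EPI indexed by such a $\sigma^\star$ is then violated by $(\hat\theta,\hat x)$, which is the assertion of the theorem.

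The core is an adjacent-transposition exchange argument. Suppose $\sigma$ is not sorted, pick $k$ with $\hat x_{\sigma_k}<\hat x_{\sigma_{k+1}}$, and let $\sigma'$ be obtained from $\sigma$ by swapping entries $k$ and $k+1$. Put $A:=\{\sigma_1,\dots,\sigma_{k-1}\}$, $a:=\sigma_k$, $b:=\sigma_{k+1}$. The prefix sets of $\sigma$ and $\sigma'$ agree everywhere except at position $k$ (where they are $A\cup\{a\}$ versus $A\cup\{b\}$), so only the coefficients of $\hat x_a$ and $\hat x_b$ change between $g(\sigma)$ and $g(\sigma')$; a short computation collapses the difference to
\begin{align*}
  g(\sigma')-g(\sigma) = D\,(\hat x_b-\hat x_a), \qquad D:=\phi(A\cup\{a\})+\phi(A\cup\{b\})-\phi(A\cup\{a,b\})-\phi(A).
\end{align*}
By the submodularity of $\phi$ we have $D\ge 0$, and $\hat x_b-\hat x_a>0$ by the choice of $k$, so $g(\sigma')\ge g(\sigma)$. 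Each such swap reduces by one the number of inversions of $\sigma$ relative to the nonincreasing order of $\hat x$, so after finitely many swaps we reach $\sigma^\star$ with $g(\sigma^\star)\ge g(\sigma)$; since $\sigma$ was arbitrary, $\sigma^\star$ maximizes $g$, completing the proof.

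The only genuine work is the bookkeeping in the exchange step — verifying that exactly two coefficients change and that their net effect telescopes to $D(\hat x_b-\hat x_a)$ — and this is routine given the definition of submodularity; no conceptual obstacle remains. One may alternatively observe that, up to the additive constant $\phi(\varnothing)$, $g(\sigma)$ is exactly the value returned by running Edmonds' greedy algorithm for $\phi$ with the elements ordered by $\sigma$, so that the theorem is an instance of the optimality of that algorithm; the exchange argument above is simply a self-contained proof of that fact in the form we need.
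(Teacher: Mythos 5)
Your proposal is correct. Note, however, that the paper does not prove this statement at all: it is quoted from Schrijver's book (the greedy/separation property of extended polymatroids), so there is no in-paper argument to compare against. What you supply is the standard self-contained proof — Edmonds' greedy optimality via an adjacent-transposition exchange — and your bookkeeping checks out: with $A=\{\sigma_1,\dots,\sigma_{k-1}\}$, $a=\sigma_k$, $b=\sigma_{k+1}$, only the coefficients of $\hat x_a$ and $\hat x_b$ change under the swap, and the difference indeed telescopes to $\bigl[\phi(A\cup\{a\})+\phi(A\cup\{b\})-\phi(A\cup\{a,b\})-\phi(A)\bigr](\hat x_b-\hat x_a)\ge 0$ by submodularity. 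Two small points worth making explicit: (i) the theorem implicitly assumes $\hat x\in[0,1]^n$ (as in its use for separating LP-relaxation points); if $\hat x\notin[0,1]^n$ the stated conclusion need not hold, so your ``nothing to prove'' remark is really a reading of the statement rather than a case of it — fine, but say so; (ii) when $\hat x$ has ties, the bubble-sort swaps may land on a sorted permutation different from a prefixed $\sigma^\star$, but each swap of tied entries changes $g$ by zero, so every sorted permutation attains the maximum and the violated inequality is the one the theorem names. With those caveats your argument is complete and is exactly the proof one would find in the cited reference.
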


\subsubsection{Reformulation} \label{sec:reform-linearize} We first
exchange the order of applying \(\posp{\cdot}\) and \((\cdot)^{1/p}\)
in the definition of \(g\). Since \(x^{\top}\hat{\xi}^j - v_i + 1\)
can be negative, we extend the domain of function \((\cdot)^{1/p}\) in
the following.
\begin{proposition}
  \label{prop:reform-domain-extension}
  For \(z \in \integers{}\), define
  \( \bar{f}(z) := z^{1/p} \cdot \Ind{z \geq 0} + z \cdot \Ind{z \leq -1} \). Then,
  \begin{align*}
    g(x, \xi) \equiv \min_{i \in [I]} \left( \posp{x^{\top} \hat{\xi}^j_i - v_i + 1} \right)^{1/p}
    = \posp{\bar{f}\left(\min_{i \in [I]} \left\{x^{\top}\hat{\xi}^j_i - v_i + 1\right\}\right)}.
  \end{align*}
\end{proposition}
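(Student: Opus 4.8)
The claim is an identity between two functions of $x$ (with $\xi$, the samples $\hat\xi^j$, and the parameters $v_i$ fixed), so the plan is to verify it by a short case analysis on the sign of the inner minimum $m := \min_{i\in[I]}\{x^\top\hat\xi^j_i - v_i + 1\}$, which is an integer because all the data are integral. First I would observe that, since $t\mapsto (\lceil t\rceil_+)^{1/p}$ is nondecreasing in $t$ and $\min_i$ commutes with a monotone scalar map, the left-hand side equals $(\lceil\, m\,\rceil_+)^{1/p} = (m_+)^{1/p}$ (the ceiling is vacuous since $m\in\integers$); this reduces the statement to showing $(m_+)^{1/p} = \lceil\bar f(m)\rceil_+$.

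Next I would split into the two cases dictated by the definition of $\bar f$. If $m \ge 0$, then $\bar f(m) = m^{1/p} \ge 0$, so $\lceil\bar f(m)\rceil_+ = m^{1/p} = (m_+)^{1/p}$, matching the left-hand side. If $m \le -1$, then $\bar f(m) = m \le -1 < 0$, so $\lceil\bar f(m)\rceil_+ = 0$; and on the other side $(m_+)^{1/p} = 0^{1/p} = 0$ as well. Both cases agree, which establishes the identity. The only mild subtlety to spell out is why pushing $\min_{i\in[I]}$ through the map $t \mapsto (t_+)^{1/p}$ is legitimate — i.e. that this map is monotone nondecreasing on its domain (on $\integers$, or on $\reals$ if one prefers) — and, symmetrically, why $\bar f$ is itself nondecreasing on $\integers$, since the two pieces $z^{1/p}\Ind{z\ge 0}$ and $z\,\Ind{z\le -1}$ are each nondecreasing and the lower piece ($\le -1 < 0$) never exceeds the upper piece ($\ge 0$); this is what lets us write $\bar f(\min_i\{\cdot\}) = \min_i \bar f(\cdot)$ if one wants the intermediate form, though strictly it is not even needed for the two-case argument above.

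There is no real obstacle here: the statement is essentially a bookkeeping lemma that legitimizes swapping the order of $\lceil\cdot\rceil_+$ and $(\cdot)^{1/p}$ so that the ``$\min_i$'' lands inside an (extended) affine-then-monotone expression, preparing the submodularity argument of the next subsection. If anything requires care, it is being explicit that $m$ is an integer — so that $(\lceil m\rceil)_+ = m_+$ and the fractional behavior of $(\cdot)^{1/p}$ near $0$ is irrelevant — and that the definition $\bar f(z) = z^{1/p}\Ind{z\ge 0} + z\,\Ind{z\le -1}$ covers all of $\integers$ with no gap or overlap (the cases $z\ge 0$ and $z\le -1$ are complementary on $\integers$). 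I would present the whole thing as a two-line case check after the monotonicity remark.
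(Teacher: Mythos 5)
Your proposal is correct and follows essentially the same route as the paper: commute \(\min_{i\in[I]}\) with the monotone map \(t\mapsto(\posp{t})^{1/p}\), then verify the resulting single-variable identity \((\posp{m})^{1/p}=\posp{\bar f(m)}\), which the paper settles by noting \(\bar f(z)\geq 0\) iff \(z\geq 0\) and you settle by the equivalent two-case check on \(m\geq 0\) versus \(m\leq -1\). The only cosmetic issue is your extraneous ceiling notation, which you correctly note is vacuous since \(m\) is an integer.
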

\begin{proof}
  For \(x \in \binaries^n\), we have
  \begin{align*}
    \min_{i \in [I]} \left( \posp{x^{\top} \hat{\xi}^j_i - v_i + 1} \right)^{1/p}
    & = \left( \min_{i \in [I]} \posp{x^{\top} \hat{\xi}^j_i - v_i + 1}\right)^{1/p} \\
    & = \left( \posp{\min_{i \in [I]} \left\{x^{\top} \hat{\xi}^j_i - v_i + 1\right\}} \right)^{1/p} \\
    & = \posp{\bar{f} \left( \min_{i \in [I]} \left\{x^{\top}\hat{\xi}^j_i - v_i + 1\right\} \right)},
  \end{align*}
where the first equality is because \((\cdot)^{1/p}\) is monotone, the second equality is because \((\cdot)^+\) is monotone, and the third equality is because \(\bar{f}(z) \geq 0\) if and only if \(z \geq 0\).
\end{proof}

{\color{black}Although \(\bar{f}\) is defined on \(\integers\), the conclusion of Proposition~\ref{prop:reform-domain-extension} holds even when \(x^{\top}\hat{\xi}^j_i\) takes a fractional value.} Next, we linearize the nonlinear function \(\bar{f}\). \editsII{In what follows, Propositions~\ref{prop:reform-nonlinear}--\ref{prop:reform-closed-form-sol} utilize the fact that \(x^{\top} \hat{\xi}^j_i\) is an integer.}
\begin{proposition}%
\label{prop:reform-nonlinear}
For any \(x \in \binaries^n\), it holds that
\begin{align*}
  \bar{f} \left( \min_{i \in [I]} \left\{x^{\top} \hat{\xi}^j_i - v_i + 1\right\} \right) = \max_y ~~
  & c^{\top}_p y + \bar{f}(1 - v_m)\\
  \st{} ~~
  & \ones{}^{\top} y \leq \min_{i \in [I]} \left\{x^{\top} \hat{\xi}^j_i - v_i + v_m\right\}, \\
  & y \in [0, 1]^{n},
\end{align*}
where \(\displaystyle v_m := \max \set{v_i \colon i \in [I]}\) and \(c_p := [\bar{f}(2 - v_m) - \bar{f}(1 - v_m), \bar{f}(3 - v_m) - \bar{f}(2 - v_m),
\ldots, \bar{f}(n + 1 - v_m) - \bar{f}(n - v_m)]^{\top}\).
\end{proposition}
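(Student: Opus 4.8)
The plan is to fix $x \in \binaries^n$, abbreviate $s := \min_{i\in[I]}\{x^\top\hat\xi^j_i - v_i + 1\}$ and $t := \min_{i\in[I]}\{x^\top\hat\xi^j_i - v_i + v_m\}$, and show that the linear program on the right-hand side has optimal value exactly $\bar f(s)$. First I would record the arithmetic of $s$ and $t$: since $x$ and every $\hat\xi^j_i$ are $0$--$1$ vectors, each $x^\top\hat\xi^j_i$ is an integer in $\{0,\dots,n\}$, so $t$ is an integer with $t = s + v_m - 1$; moreover each term $x^\top\hat\xi^j_i - v_i + v_m$ is at least $v_m - v_i \ge 0$, while the term for an index attaining $v_m$ equals $x^\top\hat\xi^j_i \le n$, so $t \in \{0,1,\dots,n\}$.

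Next I would establish that the coefficient vector $c_p = [d_1,\dots,d_n]^\top$, with $d_k := \bar f(k+1-v_m) - \bar f(k-v_m)$, has positive, nonincreasing entries. Positivity holds because $\bar f$ is strictly increasing on the integers: it equals $z\mapsto z^{1/p}$ on $z\ge 0$, equals $z\mapsto z$ on $z\le -1$, and $\bar f(-1) = -1 < 0 = \bar f(0)$. For monotonicity I would split on the sign of $k - v_m$: when $k - v_m \le -1$ one gets $d_k = 1$; the two ``gluing'' differences $\bar f(0) - \bar f(-1)$ and $\bar f(1) - \bar f(0)$ also equal $1$; and when $k - v_m \ge 1$, strict concavity of $z\mapsto z^{1/p}$ on $(0,\infty)$ makes $d_k = (k+1-v_m)^{1/p} - (k-v_m)^{1/p}$ strictly decreasing in $k$, with value $2^{1/p} - 1 < 1$ at the first such $k$, so there is no upward jump. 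Hence $d_1 \ge d_2 \ge \cdots \ge d_n > 0$.

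Finally I would solve the linear program. The point $y^\star := \sum_{k=1}^{t}\basevec_k$ is feasible because $0 \le t \le n$; when $t = 0$ it is the unique feasible point, and for $t \ge 1$ it is optimal by an exchange argument: for any feasible $y$,
\begin{align*}
  c_p^\top y^\star - c_p^\top y
  = \sum_{k=1}^{t} d_k(1 - y_k) - \sum_{k=t+1}^{n} d_k y_k
  \ \ge\ d_t\Big(\sum_{k=1}^{t}(1 - y_k) - \sum_{k=t+1}^{n} y_k\Big)
  = d_t\,(t - \ones^\top y) \ \ge\ 0,
\end{align*}
using $d_k \ge d_t$ for $k \le t$, $d_k \le d_t$ for $k > t$, $0 \le y_k \le 1$, and $\ones^\top y \le t$. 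Thus the optimal value is $c_p^\top y^\star + \bar f(1 - v_m) = \sum_{k=1}^{t} d_k + \bar f(1 - v_m)$, which telescopes to $\bar f(t + 1 - v_m) = \bar f(s)$ since $t + 1 - v_m = s$; this is precisely the left-hand side (and, combined with Proposition~\ref{prop:reform-domain-extension}, yields the promised linearization of $g$).

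The only step requiring genuine care is the monotonicity claim for $c_p$, since $\bar f$ is piecewise and $1 - v_m$ may be zero or negative, so one must verify the behavior of the differences across the gluing point $z = 0$. Once that is in hand, the remainder is a fractional-knapsack computation with an integer budget followed by a telescoping sum, both routine.
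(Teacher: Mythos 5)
Your proof is correct and takes essentially the same route as the paper's: both treat the right-hand side as a continuous knapsack with integer capacity $t=\min_i\{x^{\top}\hat{\xi}^j_i-v_i+v_m\}$ and nonnegative, nonincreasing coefficients $c_p$, conclude that setting the first $t$ coordinates of $y$ to one is optimal (you justify this with an explicit exchange argument, and you also verify the monotonicity of $c_p$ across the gluing point of $\bar{f}$, which the paper dismisses as holding ``by construction''), and then telescope to obtain $\bar{f}(t+1-v_m)=\bar{f}(s)$. The only microscopic caveat is that for $p=1$ the difference $2^{1/p}-1$ equals $1$ rather than being strictly less than $1$, but your argument only needs the weak inequality, so nothing breaks.
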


\begin{proof}
For fixed \(x \in \binaries^n\), define
\(\bar{v} := \min \set{x^{\top} \hat{\xi}^j_i - v_i + 1 \colon i \in [I]}\).
Since the above linear program is a continuous knapsack problem with an integer knapsack capacity and \(c_p \geq 0\), there exists an optimal solution \(y^* \in \binaries^n\) such that  \(\ones{}^{\top} y^* = \bar{v} + v_m - 1\).
Also note that vector \(c_p\) has non-increasing entries because \(\bar{f}\) is
a concave increasing function by construction. Hence, without loss of optimality, the first \(\bar{v} + v_m - 1\) entries of \(y^*\) equal one and the remaining entries equal zero. This yields an optimal objective value
\begin{align*}
  c^{\top}_p y^* & = \bar{f}(2 - v_m) - \bar{f}(1 - v_m) + \bar{f}(3 - v_m) - f(2 - v_m) +
  \cdots + \bar{f}(\bar{v}) - \bar{f}(\bar{v} - 1) + \bar{f}(1 - v_m) \\
  & = \bar{f}(\bar{v}) = \bar{f}\left(\min_{i \in [I]}\left\{ x^{\top} \hat{\xi}^j_i - v_i + 1\right\}\right).
\end{align*}
\end{proof}

We are now ready to present the main result of this section.
\begin{proposition}\label{prop:reform-two-stage}
(\DRCCbC{}) admits the following reformulation:
\begin{align}
(\mathrm{\bf MP}) \quad \min_{x, \gamma, z} ~~
& c^{\top} x \nonumber{}\\
\st{} ~~
& \delta - \gamma \epsilon \leq \frac{1}{N} \sum_{j \in [N]} z_{j},  \nonumber{}\\
& - z_j - \gamma \geq \min \Set{Q_j (\mu, x), 0}, \quad \forall \mu \in \mathcal{M}, \forall j \in [N],  \label{eqn:reform-submod}\\
& x \in \binaries^{n}, \gamma \in \reals_{+}, z \in \reals^{N}_{-}, \nonumber{}
\end{align}
where \(Q_j (\mu, x) := \sum_{i \in [I]} \mu_{1i} \left( x^{\top} \hat{\xi}^j_i - v_i + v_m \right) + \ones^{\top} \mu_2 - \bar{f}(1 - v_m)\) and \(\mathcal{M} := \{(\mu_1, \mu_2) \in \reals{}^I_- \times \reals{}^n_-: \ \mu_1^{\top} \ones_{I \times n} + \mu_2^{\top} \leq -c^{\top}_p\}\).
\end{proposition}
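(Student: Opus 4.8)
The plan is to combine Propositions~\ref{prop:reform-mono}, \ref{prop:reform-domain-extension}, and~\ref{prop:reform-nonlinear} with a single application of linear programming (LP) duality. By Proposition~\ref{prop:reform-mono}, (\DRCCbC{}) is equivalent to $\min\{c^\top x : x \in Z\}$ with $Z$ as in \eqref{eqn:reform-mono}. Since (\textbf{MP}) keeps the same objective $c^\top x$, the same domains $x \in \binaries^n$, $\gamma \in \reals_+$, $z \in \reals^N_-$, and the same averaging constraint $\delta - \gamma\epsilon \le \tfrac{1}{N}\sum_{j\in[N]} z_j$, it suffices to show that for each fixed $x \in \binaries^n$ and each $j \in [N]$ the scalar inequality
\begin{align*}
z_j + \gamma \le g(x, \hat{\xi}^j) = \min_{i\in[I]}\left(\posp{x^\top\hat{\xi}^j_i - v_i + 1}\right)^{1/p}
\end{align*}
is equivalent to the family $-z_j - \gamma \ge \min\{Q_j(\mu, x), 0\}$ for all $\mu \in \mathcal{M}$.

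First I would rewrite $g$. By Proposition~\ref{prop:reform-domain-extension}, $g(x,\hat{\xi}^j) = \posp{\bar f\big(\min_{i\in[I]}\{x^\top\hat{\xi}^j_i - v_i + 1\}\big)}$, and by Proposition~\ref{prop:reform-nonlinear} the argument of $\posp{\cdot}$ equals the optimal value of
\begin{align*}
\max_{y\in[0,1]^n}\ \Big\{\, c_p^\top y + \bar f(1 - v_m) \ :\ \ones^\top y \le x^\top \hat{\xi}^j_i - v_i + v_m,\ \forall i \in [I] \,\Big\},
\end{align*}
where I have split the single constraint $\ones^\top y \le \min_i\{\cdot\}$ into one inequality per $i \in [I]$. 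Writing $\beta_i := x^\top\hat{\xi}^j_i - v_i + v_m$, note $\beta_i \ge 0$ (since $x^\top\hat{\xi}^j_i \ge 0$ and $v_m \ge v_i$), so $y = 0$ is feasible and the LP is bounded over $[0,1]^n$; hence strong LP duality applies and the dual optimum is attained. Introducing nonnegative dual multipliers $w$ for the rows $\ones^\top y \le \beta_i$ and $t$ for the box rows $y \le \ones$, the substitution $\mu_1 = -w \le 0$, $\mu_2 = -t \le 0$ turns the dual feasible set into exactly $\mathcal{M} = \{(\mu_1,\mu_2)\in\reals^I_-\times\reals^n_- : \mu_1^\top\ones_{I\times n} + \mu_2^\top \le -c_p^\top\}$ and the dual objective (including the constant $\bar f(1-v_m)$) into $\bar f(1-v_m) - \sum_{i\in[I]}\mu_{1i}\beta_i - \ones^\top\mu_2 = -Q_j(\mu,x)$. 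Strong duality therefore yields
\begin{align*}
\bar f\big(\min_{i\in[I]}\{x^\top\hat{\xi}^j_i - v_i + 1\}\big) = \min_{\mu\in\mathcal{M}}\big(-Q_j(\mu, x)\big),
\end{align*}
so $g(x,\hat{\xi}^j) = \posp{\min_{\mu\in\mathcal{M}}(-Q_j(\mu,x))}$.

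It then remains to commute $\posp{\cdot}$ past the minimization. Using the elementary identity $\posp{\inf_\mu a(\mu)} = \inf_\mu \posp{a(\mu)}$ — checked by distinguishing $\inf_\mu a(\mu) \ge 0$ from $\inf_\mu a(\mu) < 0$ — with $a(\mu) = -Q_j(\mu,x)$, one obtains $g(x,\hat{\xi}^j) = \min_{\mu\in\mathcal{M}} \posp{-Q_j(\mu,x)} = \min_{\mu\in\mathcal{M}}\big(-\min\{Q_j(\mu,x),0\}\big)$. Consequently $z_j + \gamma \le g(x,\hat{\xi}^j)$ holds if and only if $z_j + \gamma \le -\min\{Q_j(\mu,x),0\}$ for every $\mu \in \mathcal{M}$, i.e.\ constraint \eqref{eqn:reform-submod}. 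Together with Proposition~\ref{prop:reform-mono} this gives the equivalence of (\DRCCbC{}) and (\textbf{MP}).

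I expect the main obstacle to be the duality bookkeeping in the second step: one must split the $\min_i$ constraint into $I$ rows \emph{before} dualizing, track signs so that the nonnegative dual multipliers become the nonpositive $\mu_1,\mu_2$ and the dual feasibility condition lines up precisely with $\mu_1^\top\ones_{I\times n} + \mu_2^\top \le -c_p^\top$, and verify the strong-duality hypothesis (primal feasibility and boundedness, which rely on $\beta_i \ge 0$) so that the value is finite and $\min_{\mu\in\mathcal{M}}$ is genuinely attained. The $\posp{\cdot}$-commutation and the observation that the remaining ingredients of (\textbf{MP}) are inherited verbatim from \eqref{eqn:reform-mono} and (\DRCCbC{}) are routine.
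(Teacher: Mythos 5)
Your proposal is correct and follows essentially the same route as the paper: reduce to the per-scenario constraint via Proposition~\ref{prop:reform-mono}, apply Propositions~\ref{prop:reform-domain-extension} and~\ref{prop:reform-nonlinear} (splitting the $\min_i$ into $I$ rows), dualize the resulting LP to obtain $\mathcal{M}$ and $Q_j$, and then exchange $\posp{\cdot}$ with the minimization over $\mu$. Your identity $\posp{\min_\mu(-Q_j)}=\min_\mu\posp{-Q_j}$ is exactly the paper's step \eqref{eqn:reform-note-3}, $\min\{\max_{\mu\in\mathcal{M}}Q_j(\mu,x),0\}=\max_{\mu\in\mathcal{M}}\min\{Q_j(\mu,x),0\}$, rewritten via $\min\{a,0\}=-\posp{-a}$, so the two proofs differ only in presentation (dualizing the max form versus the min form).
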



\begin{proof}
By Propositions~\ref{prop:reform-mono} and~\ref{prop:reform-nonlinear},
(\DRCCbC{}) is equivalent to:
\begin{align}
\min_{x, \gamma, z} ~~
& c^{\top} x \nonumber{}\\
\st{} ~~
& \delta - \gamma \epsilon \leq \frac{1}{N} \sum_{j \in [N]} z_{j},  \nonumber{}\\
& - z_j - \gamma \geq \min \Set{Q'_j (x), 0}, \quad \forall j \in [N], \nonumber{}\\
& x \in \binaries^{n}, \gamma \in \reals_{+}, z \in \reals^{N}_{-}, \nonumber{}
\end{align}
where \(Q'_j(x)\) is defined as:
\begin{subequations}
\begin{flalign}
(\mathrm{\bf SP}_j) && Q'_j(x) :=
\min_{y_j} ~~
& -c^{\top}_p y - \bar{f}(1 - v_m) && \nonumber\\
&& \st{} ~~
& \ones{}^{\top} y_j \leq x^{\top} \hat{\xi}^{j}_{i} - v_i + v_m, \quad \forall i \in [I], && \label{eqn:reform-note-1} \\
&& & y_j \in [0, 1]^{n}. && \label{eqn:reform-note-2}
\end{flalign}
\end{subequations}
Taking dual of the above linear program yields:
\begin{flalign*}
  (\mathrm{\bf DSP}_j) && Q'_j(x) = \max_{\mu} ~~
  & Q_j(\mu, x) \equiv \sum_{i \in [I]} \mu_{1i} \left( x^{\top} \hat{\xi}^j_i - v_i + v_m \right) +
    \ones^{\top} \mu_2 - \bar{f}(1 - v_m) && \\
  && \st{} ~~
  & \mu \in \mathcal{M}, &&
\end{flalign*}
where dual variables \(\mu_1, \mu_2\) are associated with constraints \eqref{eqn:reform-note-1} and \eqref{eqn:reform-note-2}, respectively. Then, the reformulation follows by noting that
\begin{align}
  \min \Set{ \max_{\mu \in \mathcal{M}} Q_j(\mu, x), 0}
  = \max_{\mu \in \mathcal{M}} \min \Set{Q_j(\mu, x), 0}. \label{eqn:reform-note-3}
\end{align}
\end{proof}

\begin{remark}
By Lemma~\ref{lem:reform-neg-posp-submod}, the \RHS{} of constraints  \eqref{eqn:reform-submod} is submodular in \(x\). As a result, we can replace \eqref{eqn:reform-submod} with the EPI with respect to this submodular function. This implies that (\DRCCbC{}) admits a mixed-integer \emph{linear} reformulation.
\end{remark}
Proposition \ref{prop:reform-two-stage} suggests a Benders decomposition (\benders{}) algorithm of solving (\DRCCbC{}) in an iterative manner. In each iteration, \benders{} solves
\((\masterp{})\) with constraints~\eqref{eqn:reform-submod} replaced by a relaxation, and sends an optimal solution \((\hat{x}, \hat{\gamma}, \hat{z})\) to every \((\dsubp_j)\) to check feasibility. If feasible, then \((\hat{x}, \hat{\gamma}, \hat{z})\) is optimal to (\DRCCbC{}); otherwise, we
identify a \(\hat{\mu} \in \mathcal{M}\) such that
\(-\hat{z}_j - \hat{\gamma} < \min \{Q_j(\hat{\mu}, \hat{x}), 0\}\). In the latter case, we obtain a violated EPI with respect to \(\min \{Q_j(\hat{\mu}, x), 0\}\) and add it as a Benders feasibility cut back to \((\masterp{})\). Naturally, the efficacy of this \benders{} algorithm depends on that of solving \((\dsubp{}_j)\). The next proposition gives a closed-form solution to \((\dsubp{}_j)\).
\begin{proposition}\label{prop:reform-closed-form-sol}
For fixed \(x\), an optimal solution \((\hat{\mu}_1, \hat{\mu}_2)\) to \((\dsubp{}_j)\) satisfies
\begin{gather}
\hat{\mu}_1 = - \basevec_{i^*} (c_p)_{v^*}, \nonumber{} \\
(\hat{\mu}_2)_k = \min \Set{-(c_p)_k - \ones^{\top} \hat{\mu}_1, 0}, \quad \forall k \in [n],
\label{eqn:reform-dsub-2}
\end{gather}
where \(\displaystyle v_m = \max \set{v_i \colon i \in [I]}\), 
\(\displaystyle i^* \in \argmin\{ x^{\top} \hat{\xi}^j_i - v_i + v_m: i \in [I]\}\), \(\bar{v} := \min \set{x^{\top} \hat{\xi}^j_i - v_i + v_m \colon i \in [I]}\), and \(v^* \in \set{\ceil{\bar{v}}, \floor{\bar{v}}}\).
\end{proposition}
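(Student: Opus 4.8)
The plan is to maximize the dual objective $Q_j(\cdot,x)$ over $\mathcal{M}$ directly, reducing it to a one-dimensional concave maximization from which the closed form drops out. First I would record the two structural facts that drive everything: since $v_m=\max\{v_i:i\in[I]\}$, each right-hand side $b_i:=x^{\top}\hat{\xi}^j_i-v_i+v_m$ in~\eqref{eqn:reform-note-1} is nonnegative; and, as already used in the proof of Proposition~\ref{prop:reform-nonlinear}, $c_p\ge 0$ has non-increasing entries because $\bar{f}$ is concave and increasing. Consequently $(\subp_j)$ is a continuous knapsack of capacity $\bar{v}=\min_i b_i$; when $\bar{v}$ is integral (in particular whenever $x$ is binary) its optimal value is $Q'_j(x)=-\sum_{k=1}^{\bar{v}}(c_p)_k-\bar{f}(1-v_m)=-\bar{f}(\bar{v}+1-v_m)$ by telescoping, and by strong duality this is also the optimal value of $(\dsubp_j)$.

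For the dual, observe that $\mathcal{M}$ is separable in the components of $\mu_2$: for any fixed $\mu_1\le 0$, the choice of $\mu_2$ maximizing $\ones^{\top}\mu_2$ is, component-wise, $(\mu_2)_k=\min\{-(c_p)_k-\ones^{\top}\mu_1,\,0\}$, which is exactly~\eqref{eqn:reform-dsub-2} once $\hat{\mu}_1$ is fixed. Substituting this back and writing $s:=-\ones^{\top}\mu_1\ge 0$, and noting that for fixed $s$ the term $\sum_i b_i\mu_{1i}=-\sum_i b_i(-\mu_{1i})$ is maximized (because $b_i\ge 0$) by concentrating the whole weight $s$ on a minimizing index $i^*\in\argmin_i b_i$ — i.e.\ $\hat{\mu}_1=-\basevec_{i^*}s$, with value $-\bar{v}\,s$ — the dual collapses to $\max_{s\ge 0}\psi(s)$, where $\psi(s):=-\bar{v}\,s-\sum_k\bigl((c_p)_k-s\bigr)^+-\bar{f}(1-v_m)$ is concave and piecewise linear. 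This is the only place the identity of $i^*$ enters.

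The maximizer of $\psi$ is identified by a breakpoint argument: the right derivative of $\psi$ at $s$ is $\bigl|\{k:(c_p)_k>s\}\bigr|-\bar{v}$, a non-increasing step function (monotonicity of $c_p$) that decreases from $n-\bar{v}$ to $-\bar{v}$ with jumps at the values taken by $c_p$. It changes sign at $s^*=(c_p)_{\lceil\bar{v}\rceil}$, because for $s$ just below this value at least $\bar{v}$ of the $(c_p)_k$ exceed $s$, whereas for $s$ just above it at most $\bar{v}-1$ do (using $(c_p)_k\ge(c_p)_{\lceil\bar{v}\rceil}$ for $k\le\lceil\bar{v}\rceil$). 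When $x$ is binary, $\bar{v}$ is integral and $\lceil\bar{v}\rceil=\lfloor\bar{v}\rfloor$, which is why any $v^*\in\{\lceil\bar{v}\rceil,\lfloor\bar{v}\rfloor\}$ is admissible. Plugging $s^*=(c_p)_{v^*}$ into $\hat{\mu}_1=-\basevec_{i^*}s^*$ and into~\eqref{eqn:reform-dsub-2} gives the stated formulas, and checking that $\psi(s^*)$ equals the primal optimal value $-\bar{f}(\bar{v}+1-v_m)$, together with strong duality, confirms optimality. I expect the main obstacle to be exactly this univariate step — verifying $s^*=(c_p)_{v^*}$ is optimal despite ties among the leading entries of $c_p$ (which all equal $1$ for indices up to $v_m$), and handling the boundary cases $\bar{v}=0$ (where one takes $v^*=1$, so $s^*=(c_p)_1=1$) and $\bar{v}=n$; the separable optimization over $\mu_2$, the choice of $i^*$, and tracking the sign conventions from $(\subp_j)$ to $(\dsubp_j)$ are routine.
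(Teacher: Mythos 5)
Your proposal is correct and follows essentially the same route as the paper: optimize \(\mu_2\) componentwise at its upper bound to get \eqref{eqn:reform-dsub-2}, collapse the \(\mu_1\)-part to a one-dimensional problem in the aggregate \(\ones^{\top}\mu_1\) by concentrating all weight on \(i^* \in \argmin_i\{x^{\top}\hat{\xi}^j_i - v_i + v_m\}\), and use the monotonicity of \(c_p\) to place the maximizer at a breakpoint \((c_p)_{v^*}\). Your explicit right-derivative/breakpoint computation and the optional primal-value/strong-duality check simply spell out what the paper asserts in one line about the descending entries of \(c_p\).
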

\begin{proof}
To maximize \(Q_j(\mu, x)\), every component in \(\mu_2\) should attain its
upper bound at optimality; and hence~\eqref{eqn:reform-dsub-2} follows. \edits{Since
\begin{enumerate*}[label=(\roman*)]
\item \(\mu_1 \leq 0\),
\item \(x^{\top} \hat{\xi}^j_i - v_i + v_m \geq 0\) for all \(i \in [I]\), and
\item \(\sum_{k \in [n]} \posp{(c_p)_k + \ones^{\top}\mu_1}\) relies solely on
\(\ones^{\top}\mu_1\),
\end{enumerate*}
the support of \(\mu_1\) is a singleton at optimum, \ie{},
\(\mu_1 = s \cdot \mathbf{e}_{i^*}\) for a nonpositive scalar $s$. Accordingly, (\(\text{DSP}_j\)) reduces to a one-dimensional problem
\begin{align*}
  \max_{\mu \in \mathcal{M}} Q_j(\mu, x) = \max_{s \in \reals_-} \Set{
  s \cdot \bar{v} -
  \sum_{k \in [n]} \posp{(c_p)_k + s}} - \bar{f}(1 - v_m),
\end{align*}
and \(\ones^{\top} \mu_1 = s\). Since the entries of \(c_p\) are descending, a
maximizer to the above problem equals either \(-(c_p)_{\floor{\bar{v}}}\) or
\(-(c_p)_{\ceil{\bar{v}}}\).}
\end{proof}
Therefore, by Theorem~\ref{thm:reform-epi-sep} the Benders feasibility cut takes the following form:
\begin{align}
  & - z_j - \gamma
  \geq \phi^j(\varnothing) +
    \sum_{k = 1}^n \left[ \phi^j(\mathcal{T}_k) - \phi^j(\mathcal{T}_{k - 1}) \right] x_{\sigma_k},
    \label{eqn:reform-feasi-base}\\
  \mathrm{where} \quad & \phi^j(x) := \min\left\{\ones^{\top} \hat{\mu}_1 \left( x^{\top} \hat{\xi}^j_{i^*} - v_{i^*} + v_m \right) + \ones^{\top} \hat{\mu}_2 - \bar{f}(1 - v_m), \ 0\right\} \nonumber{} \\
  \mathrm{and} \quad & \phi^j(\mathcal{T}_k) - \phi^j(\mathcal{T}_{k - 1})
  =
  \begin{cases}
  \ones^{\top}\hat{\mu}_1 & \text{ if } \phi^j(\mathcal{T}_{k - 1}) < 0 \\
  \phi^j(\mathcal{T}_k) & \text{ if } \phi^j(\mathcal{T}_{k - 1}) = 0 \text{ and } \phi^j(\mathcal{T}_k) < 0 \\
  0 & \text{ if } \phi^j(\mathcal{T}_{k}) = 0.
  \end{cases} \nonumber{}
\end{align}
\editsII{
We compare our two-stage reformulation of (\DRCCbC{}) with that of~\cite{xie-2019-distr-robus} 
when decision variables are
binary. As pointed out after Proposition~\ref{prop:reform-mono}, our
setting is inherently different from that
of~\cite{xie-2019-distr-robus}, and as a result, the technique
Xie~\cite{xie-2019-distr-robus} used to exploit the supermodularity
does not apply to our case. To be concrete, suppose that \(\txi\) is supported in a vector space as in~\cite{xie-2019-distr-robus}. Then, applying Theorem~\(1\)
and Proposition~\(1\) of~\cite{xie-2019-distr-robus} to our set covering model yields the following reformulation of~(\DRCCbC{}):
\begin{align*}
  Z_0 = \Set{
  x \in \binaries^n \colon
  \begin{aligned}
  & \exists \, \gamma \in \reals_+, \nu \in \reals_{+}, z \in \reals^N_- \colon \\
  & \delta \nu - \gamma\epsilon \leq \frac{1}{N} \sum_{j \in [N]} z_j, \\
  & \norm{x}_{p/(p-1)} \leq \nu, \\
  & z_j + \gamma \leq \posp{x^{\top} \hat{\xi}^j_i - v_i}, \forall i \in [I], j \in [N] \\
  \end{aligned}}.
\end{align*}
In the last constraints of $Z_0$, the supermodularity of
$(x^{\top} \hat{\xi}^j_i - v_i)^+$ in $x$ appears naturally. In
contrast, in our case the term 
$\bigl((x^{\top}\hat{\xi}^j_i-v_i + 1)^+\bigr)^{1/p}$ in \eqref{eqn:reform-mono} is neither supermodular in $x$ nor in its
complements (\ie{}, replacing some entries $x_k$ with $1 - x_k$)
because the root function $(\cdot)^{1/p}$ is strictly concave for all
$p > 1$. To exploit supermodularity, we need to recast the root
function as a linear program (\ie{},
Proposition~\ref{prop:reform-nonlinear}) and then take dual.
}



\section{Valid Inequalities}%
\label{sec:valid_ineqs}
We derive two families of valid inequalities for the
reformulation~\eqref{eqn:reform-mono} of \(Z\).
Section~\ref{sec:valid_ineqs_single} provides inequalities based on a single
scenario \(j \in [N]\), Section~\ref{sec:proofs-of-propositions} consists of
detailed proofs for establishing a convex-hull result of these inequalities, and
Section~\ref{sec:valid_ineqs_cross} provides cross-scenario inequalities.

\subsection{Valid inequalities from a single scenario}%
\label{sec:valid_ineqs_single}
We focus on a nonlinear inequality
\(z_j + \gamma \leq ((x^{\top}\hat{\xi}^j_i - v_i + 1)^+)^{1/p}\) in the
reformulation~\eqref{eqn:reform-mono}, for a single scenario \(j \in [N]\) and a
single covering \(i \in [I]\). If \(v_i = 1\) then the \RHS{} of this inequality
becomes a submodular function in \(x\) because \(p \geq 1\) and
\(\hat{\xi}^j_i \geq 0\). For general \(v_i\), the \RHS{} can be viewed as the
submodular function being ``shifted'' by \((v_i - 1)\). However, after such a
shift the \RHS{} becomes neither submodular nor supermodular, preventing us from
using existing valid inequalities
(e.g.,~\cite{ahmed-2009-maxim-class,schrijver-2003-combin}). In this section, we
generalize this inequality and consider the following hypograph of a
shifted submodular function:
\begin{align*}
  \seti{} := \Set{(\theta, x) \in \reals \times \binaries^{n} :
  \theta \leq f\left(\posp{x^{\top}\xi - \beta}\right)},
\end{align*}
where \(f : \reals{}_+ \to \reals\) is a strictly concave increasing function
with \(f(0) = 0\), \(\xi \in \binaries^{n}\) is a given binary vector, and
\(\beta \in \integers{}_+\) is a given integer. Let \(\mathcal{Z}\) be the set
of indices where \(\xi\) takes \(1\). If \(\beta \geq \abs{\mathcal{Z}} - 1\)
then \(f((x^{\top}\xi - \beta)^+) \equiv f(1)(x^{\top}\xi - \beta)^+\) is
supermodular in \(x\). In this case, \(\Co(\seti)\) can be fully described by
the EPI. Hence, we assume \(1 \leq \beta \leq \abs{\mathcal{Z}} - 2\) without
loss of generality (\WLOG{}). Additionally, since \(\xi\) has at most \(\abs{\mathcal{Z}}\)
numbers of ones, all possible values for \((x^{\top}\xi - \beta)^+\) lie in the set
\[
L := \Bigl[\abs{\mathcal{Z}} - \beta\Bigr].
\]
We replace \(f\) with its inner piecewise linear approximation with integer breakpoints in $L$ in the definition of $\seti{}$. The following lemma formalizes this.
\editsII{%
\begin{lemma}
For \(\ell \in L\), let \(\varphi_{\ell}\) be a linear function
defined as \(\varphi_{\ell}(z):= a_{\ell} z + b_{\ell}\) where
\(a_{\ell} := f(\ell) - f(\ell - 1)\) and
\(b_{\ell} := f(\ell) - a_{\ell} \cdot \ell\). To avoid clutter, we also define \(a_{|\mathcal{Z}|-\beta+1} := 0\) and \(b_{|\mathcal{Z}|-\beta+1} := f(|\mathcal{Z}|-\beta)\). Then, we have
\(f(z) = \displaystyle\min_{\ell \in L} \set{\varphi_{\ell}(z)}\) for any
\(z \in L\). In particular,
\(f(\ell) = \varphi_{\ell}(\ell) = \varphi_{\ell + 1}(\ell)\) for all \(\ell \in L\).
\end{lemma}
\begin{proof}
It is straightforward that
\(f(\ell) = \varphi_{\ell}(\ell) = \varphi_{\ell + 1}(\ell)\) for all \(\ell \in L\). To show
that \(f(z) = \min_{\ell \in L}\set{\varphi_{\ell}(z)}\) for any
\(z \in L\), it is sufficient to argue that
\(\varphi_i(\ell) \geq \varphi_{\ell}(\ell)\) for every \(i \in
L\). When \(i < \ell\), \(\varphi_i(\ell) \geq \varphi_{\ell}(\ell)\)
is equivalent to
\(f(i) - f(i - 1) \geq (f(\ell) - f(i)) / (\ell - i)\), which holds
due to the concavity of \(f\). When \(i > \ell\),
\(\varphi_i(\ell) \geq \varphi_{\ell}(\ell)\) is equivalent to
\((f(i) - f(\ell))/(i - \ell) \geq f(i) - f(i - 1)\), which holds due
to the concavity of \(f\).
\end{proof}
We mention properties about \(a_{\ell}\) and \(b_{\ell}\) in the following remark.
\begin{remark}
For \(a_{\ell}\) and \(b_{\ell}\), it holds that
\begin{enumerate}[label=(\roman*)]
\item \(a_{\ell}\) are strictly positive and decreasing in $\ell$. This is because \(f\) is strictly concave and increasing.
\item \(b_{\ell} \geq 0\) for all \(\ell \in L\). This is because
  \[
  b_{\ell} = \sum_{k=1}^\ell \bigl(f(k) - f(k-1)\bigr) - \ell\cdot\bigl(f(\ell) - f(\ell-1)\bigr)
  = \sum_{k=1}^\ell \left(a_k - a_{\ell}\right) \geq 0,
  \]
  where the last inequality is because \(a_{\ell}\) are decreasing in \(\ell\).
\item \(b_{\ell} / a_{\ell}\) are strictly increasing in $\ell$. This is because
  \(a_{\ell}\) are strictly decreasing and \(b_{\ell}\) are increasing since
  \begin{align*}
  b_{\ell + 1} - b_{\ell}
  = \sum_{k=1}^{\ell+1} \left(a_k - a_{\ell}\right) - \sum_{k=1}^\ell \left(a_k - a_{\ell}\right) = \ell (a_{\ell} - a_{\ell+1}) > 0.
  \end{align*}
\end{enumerate}
\end{remark}
}
\edits{
In addition, for ease of presenting the valid inequalities we define
\begin{align*}
  L_1 & := \bigcup_{\ell \in L}\Set{(\ell, \rho) \colon \rho = \frac{b_{\ell}}{a_{\ell}}}, \\
  L_2 & := \bigcup_{\ell \in L}\Set{(\ell, \rho) \colon \rho \in [\beta] \cap
        \left(\frac{b_{\ell}}{a_{\ell}}, \frac{b_{\ell+1}}{a_{\ell+1}}\right)},
\end{align*}
}
and \(\hfcn_{(\ell, \rho)}: \reals_+ \rightarrow \mathbb{R}\),
\begin{align*}
  \hfcn_{(\ell, \rho)}(z)
  := \posp{\frac{f(\ell)}{(\ell + \rho)}\bigl( z - (\beta - \rho) \bigr)},
  \quad \forall (\ell, \rho) \in L_1 \cup L_2.
\end{align*}
\editsII{
\begin{remark}
  For \((\ell, \rho) \in L_1\), we can simplify \(\hfcn_{(\ell, \rho)}\) as
  \begin{align*}
    \hfcn_{(\ell, \rho)}(z)
    = \posp{a_{\ell} \left( z - \beta \right) + b_{\ell}}
    = \posp{\fcnl_{\ell} \left( z - \beta \right)},
  \end{align*}
  from which \(h_{(\ell, \rho)}(z)\) 
  coincides with \(f(\posp{z - \beta})\) for all \((\ell, \rho) \in L_1\) when 
  \(z \in \{\ell+\beta-1, \ell+\beta\}\). For ease of exposition, we
  use \(\posp{\fcnl_{\ell}}\) and \(\hfcn_{(\ell, \rho)}\)
  interchangeably if \((\ell, \rho) \in L_1\). For
  \((\ell, \rho) \in L_2\), \(h_{(\ell, \rho)}(z) = 0\) if
  \(z \leq \beta - \rho\) and \(h_{(\ell, \rho)}(z) > 0\) if
  \(z > \beta - \rho\). In particular,
  \(h_{(\ell, \rho)}(z) = f(\ell)\) when \(z = \ell + \beta\).
  \end{remark}
}
%
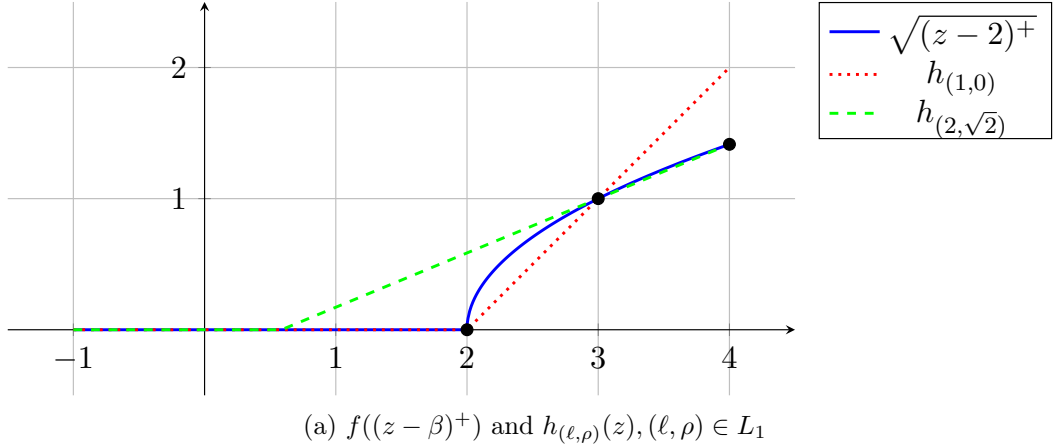
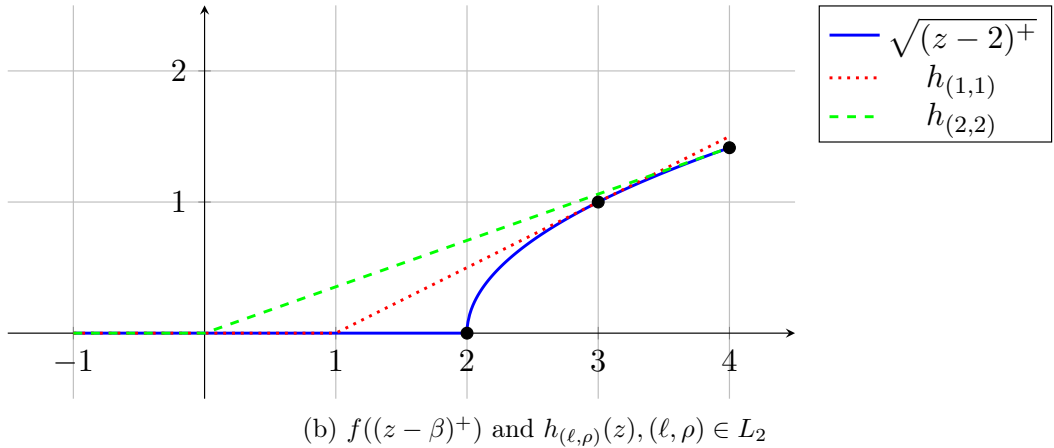
\begin{figure}[!htbp]
\begin{subfigure}{\textwidth}
  \centering
  \resizebox{0.85\textwidth}{!}{\begin{tikzpicture}
\begin{axis}[
  unit vector ratio*=1 1 1,
  width=11cm,
  grid=major,
  xtick distance=1,
  ymin=0, ymax=2,
  axis lines=middle,
  enlargelimits={abs=0.5},
  legend pos=outer north east,
]
\addplot[blue,samples=400,domain=-1:4,line width=1pt] {sqrt(max(x - 2, 0))};
\addlegendentry{$\sqrt{(z - 2)^+}$}
\addplot[red,dotted,samples=400,domain=-1:4,line width=1pt] {max(x - 2, 0)};
\addlegendentry{$h_{(1, 0)}$}
\addplot[green,dashed,samples=400,domain=-1:4,line width=1pt] {(sqrt(2) - 1) * max(x - (2 - sqrt(2)), 0)};
\addlegendentry{$h_{(2, \sqrt{2})}$}
\addplot[only marks,mark=*] table {
  x    y
  2    0
  3    1
  4    1.41421356
};
\end{axis}
\end{tikzpicture}}
  \caption{\(f((z-\beta)^+)\) and \(h_{(\ell, \rho)}(z), (\ell, \rho) \in L_1\)}%
  \label{fig:shifted-f-and-h1}
\end{subfigure}%
\par\bigskip 
\begin{subfigure}{\textwidth}
  \centering
  \resizebox{0.85\textwidth}{!}{\begin{tikzpicture}
\begin{axis}[
  unit vector ratio*=1 1 1,
  width=11cm,
  grid=major,
  xtick distance=1,
  ymin=0, ymax=2,
  axis lines=middle,
  enlargelimits={abs=0.5},
  legend pos=outer north east,
]
\coordinate (O) at (0,0);
\addplot[blue,samples=400,domain=-1:4,line width=1pt] {sqrt(max(x - 2, 0))};
\addlegendentry{$\sqrt{(z - 2)^+}$}
\addplot[red,dotted,samples=400,domain=-1:4,line width=1pt] {0.5 * max(x - 1, 0)};
\addlegendentry{$h_{(1, 1)}$}
\addplot[green,dashed,samples=400,domain=-1:4,line width=1pt] {sqrt(2) / 4 * max(x, 0)};
\addlegendentry{$h_{(2, 2)}$}
\addplot[only marks,mark=*] table {
  x    y
  2    0
  3    1
  4    1.41421356
};
\end{axis}
\end{tikzpicture}}
  \caption{\(f((z-\beta)^+)\) and \(h_{(\ell, \rho)}(z), (\ell, \rho) \in L_2\)}%
  \label{fig:shifted-f-and-h2}
\end{subfigure}
\caption{A numerical example of the shifted function \(f((z-\beta)^+)\) and \(h_{(\ell, \rho)}(z)\).} \label{fig:f-h}
\end{figure}
\begin{example} \label{exam:f-h}
Suppose that \(\abs{\mathcal{Z}} = 4\), \(\beta = 2\), and function \(f(z) = \sqrt{z}\). Then, \(L = [2]\), \(L_1 = \set{(1, 0), (2, \sqrt{2})}\), \(L_2 = \set{(1,1), (2,2)}\), and the shifted function \(f((z - \beta)^+) = \sqrt{(z-2)^+}\). Accordingly, for \((\ell, \rho) \in L_1\), the function \(h_{(\ell, \rho)}\) takes the following two forms:
\begin{align*}
    h_{(1,0)}(z) = (z-2)^+, \quad h_{(2, \sqrt{2})}(z) = \bigl(\sqrt{2}-1\bigr)\posp{z - (2-\sqrt{2})};
\end{align*}
and for \((\ell, \rho) \in L_2\), it takes the following two forms:
\begin{align*}
    h_{(1,1)}(z) = \frac{1}{2}(z-1)^+, \quad h_{(2, 2)}(z) = \frac{\sqrt{2}}{4}\posp{z}.
\end{align*}
We depict the shifted function \(f((z - \beta)^+)\) and \(h_{(\ell, \rho)}(z)\) in Fig.~\ref{fig:f-h}. In particular, from Fig.~\ref{fig:shifted-f-and-h1}, we observe that, as expected, each \(h_{(\ell, \rho)}(z)\), \((\ell, \rho) \in L_1\) coincides with \(f(\posp{z - \beta})\) when \(z \in \{\ell+\beta-1, \ell+\beta\}\). From Fig.~\ref{fig:shifted-f-and-h2}, we observe that each \(h_{(\ell, \rho)}(z)\), \((\ell, \rho) \in L_2\) has an integer \(z\)-intercept and it coincides with \(f(\posp{z - \beta})\) only at the point \((\ell+\beta, f(\ell))\).\qed
\end{example}
In what follows, we study the hypograph of \(\hfcn_{(\ell, \rho)}\) and its
relationship with \(\seti{}\). For \((\ell, \rho) \in L_1 \cup L_2\), define
\begin{align*}
  \seti_{(\ell, \rho)} :=
  \Set{ (\theta, x) \in \reals \times \binaries^{n} \colon
  \theta \leq \hfcn_{(\ell, \rho)}(x^{\top}\xi) }.
\end{align*}
First, we observe that \(\{\seti_{(\ell, \rho)}: (\ell, \rho) \in L_1\}\)
provides a reformulation for \(\seti\), while
\(\{\seti_{(\ell, \rho)}: (\ell, \rho) \in L_2\}\) provides a set of valid
inequalities.
\begin{lemma}%
  \label{lem:vis-frepr}
  For a given \(x \in \binaries^{n}\), we have
  \begin{align}
    f \left( \posp{ x^{\top} \xi - \beta} \right)
    = \min_{(\ell,\rho) \in L_1} \hfcn_{(\ell, \rho)}(x^{\top}\xi)
    \leq \min_{(\ell, \rho) \in L_2}\hfcn_{(\ell, \rho)}(x^{\top}\xi).
  \end{align}
\end{lemma}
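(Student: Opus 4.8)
The plan is to fix $x \in \binaries^n$, set $z := x^\top\xi$ (an integer in $\{0,1,\ldots,\abs{\mathcal{Z}}\}$), and treat the case $z \le \beta$ separately up front: there $f(\posp{z-\beta}) = f(0) = 0$, every $\hfcn_{(\ell,\rho)}(z)$ is $\ge 0$ by definition, and $\hfcn_{(1,0)}(z) = \posp{f(1)(z-\beta)} = 0$ with $(1,0)\in L_1$ (since $\abs{\mathcal{Z}}-\beta \ge 2$, so $1\in L$ and $b_1/a_1 = 0$); hence the middle term is $0$ and the right-hand term is $\ge 0$, so both the equality and the inequality hold. From then on I assume $z \ge \beta+1$ and write $m := z-\beta$, so $m \in L = [\abs{\mathcal{Z}}-\beta] \subseteq [n]$, and I reduce everything to scalar facts about the integer $m$.

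For the equality I use the simplification $\hfcn_{(\ell,\rho)}(z) = \posp{\fcnl_\ell(z-\beta)} = \posp{\fcnl_\ell(m)}$ recorded before the lemma for $(\ell,\rho)\in L_1$. The piecewise-linear representation $f = \min_{k\in[n]}\fcnl_k$ gives $\fcnl_\ell(m) \ge f(m) \ge f(0) = 0$ for all $\ell$, so the outer $\posp{\cdot}$ is inactive and $\min_{(\ell,\rho)\in L_1}\hfcn_{(\ell,\rho)}(z) = \min_{\ell\in L}\fcnl_\ell(m)$. Since $\fcnl_m$ interpolates $f$ at $m-1$ and $m$ we have $\fcnl_m(m) = f(m) = \min_{\ell\in[n]}\fcnl_\ell(m)$, and because $m\in L$ the restriction to $\ell\in L$ leaves the minimum unchanged; hence $\min_{(\ell,\rho)\in L_1}\hfcn_{(\ell,\rho)}(z) = f(m) = f(\posp{z-\beta})$.

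For the inequality it suffices to show $f(\posp{z-\beta}) \le \hfcn_{(\ell,\rho)}(z)$ for each fixed $(\ell,\rho)\in L_2$. Here $\hfcn_{(\ell,\rho)}(z) = \posp{\frac{f(\ell)}{\ell+\rho}(m+\rho)} = \frac{f(\ell)}{\ell+\rho}(m+\rho)$ since $m+\rho>0$, so the claim reduces to the scalar inequality $\frac{f(m)}{m+\rho} \le \frac{f(\ell)}{\ell+\rho}$. I will prove the stronger statement that $m\mapsto \frac{f(m)}{m+\rho}$ is nondecreasing for $m\le\ell$ and nonincreasing for $m\ge\ell$, hence maximized at $m=\ell$. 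The engine is the identity $f(m) = a_{m+1}m + b_{m+1}$ (valid because $\fcnl_{m+1}$ interpolates $f$ at $m$ and $m+1$): substituting it into the cross-multiplied step inequality $\frac{f(m)}{m+\rho} \le \frac{f(m+1)}{m+1+\rho}$ shows that this step is equivalent to $b_{m+1}\le\rho\,a_{m+1}$, i.e.\ to $\rho\ge b_{m+1}/a_{m+1}$. Strict concavity of $f$ makes $a_k$ strictly decreasing and $b_k$ nonnegative and strictly increasing, so $b_k/a_k$ is strictly increasing in $k$; and $(\ell,\rho)\in L_2$ says precisely that $b_\ell/a_\ell < \rho < b_{\ell+1}/a_{\ell+1}$. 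Therefore $\rho > b_{m+1}/a_{m+1}$ whenever $m+1\le\ell$ (nondecreasing part) and $\rho < b_{m+1}/a_{m+1}$ whenever $m+1\ge\ell+1$ (nonincreasing part); telescoping from $m$ up to $\ell$ or down from $\ell$ to $m$ gives $\frac{f(m)}{m+\rho}\le\frac{f(\ell)}{\ell+\rho}$ for every $m\in L$, finishing the proof. (Minor bookkeeping: all indices $a_{m+1},b_{m+1}$ used satisfy $m+1\le\abs{\mathcal{Z}}-\beta\le n$, so they are covered by the representation, or one extends $a_k,b_k$ to all $k\in\naturals$ using strict concavity of $f$ on $\integers_+$.)

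I expect the inequality part to be the crux. A naive attempt — arguing that the line through $(\ell,f(\ell))$ of slope $f(\ell)/(\ell+\rho)$ dominates the concave $f$ because they touch at $m=\ell$ — fails, since a chord touching a concave function at one interior point can dip below it on the other side. What makes the argument go through is that the two-sided bound defining $L_2$, namely $b_\ell/a_\ell < \rho < b_{\ell+1}/a_{\ell+1}$, is exactly the condition pinning the switch of the discrete ratio $f(m)/(m+\rho)$ from increasing to decreasing at $m=\ell$; the telescoping identity above is the cleanest route I see to convert that condition into the desired inequality, and it incidentally also covers the endpoint case $z\le\beta$ and the equality claim with essentially the same computation.
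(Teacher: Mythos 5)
Your proof is correct, and the equality half is essentially the paper's argument streamlined: you dispose of the case $x^{\top}\xi \le \beta$ via $h_{(1,0)}$ and then observe that the positive parts are inactive when $m := x^{\top}\xi-\beta \ge 1$, whereas the paper carries the positive parts through both directions of the equality; the content (the piecewise-linear representation of $f$ and the fact that $\varphi_m$ touches $f$ at $m$) is the same. Where you genuinely depart from the paper is the inequality. The paper fixes $(\ell,\rho)\in L_2$, derives the slope sandwich $a_{\ell+1} \le f(\ell)/(\ell+\rho) \le a_{\ell}$ (your condition $b_{\ell}/a_{\ell}\le\rho\le b_{\ell+1}/a_{\ell+1}$ rearranged), rewrites $h_{(\ell,\rho)}$ as pivoting around the touching point $(\ell+\beta, f(\ell))$, and, splitting on whether $x^{\top}\xi-\beta\le\ell$ or $\ge\ell+1$, replaces the slope by $a_{\ell}$ or $a_{\ell+1}$ to dominate $h_{(\ell,\rho)}$ from below by one of the two neighboring $L_1$ functions $(\varphi_{\ell})^{+}$, $(\varphi_{\ell+1})^{+}$, so it never compares to $f$ directly and concludes via $\min_{L_1}$. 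You instead prove the pointwise scalar statement that $m\mapsto f(m)/(m+\rho)$ is unimodal with its maximum at $m=\ell$, telescoping steps whose direction is decided by comparing $\rho$ with the breakpoint ratios $b_{m+1}/a_{m+1}$, which are monotone by strict concavity. Both arguments run on the same fuel (the interval defining $\rho$ in $L_2$): the paper's needs only the two adjacent slopes and yields the slightly stronger function-level fact that each $L_2$ inequality is dominated by an adjacent $L_1$ inequality on each side of $\ell$, while yours is more granular but self-contained at each step, and your bookkeeping (indices bounded by $|\mathcal{Z}|-\beta\le n$, and $m+\rho>0$ since $\rho\in[\beta]$) is in order.
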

\begin{proof}
Fix \(\hat{x} \in \binaries^n\). We first prove the equality. If
\(\hat{x}^{\top} \xi - \beta \leq 0\), then
\begin{align*}
  f \left(\posp{\hat{x}^{\top} \xi - \beta}\right)
  = 0
  = \fcnl_1\left(\posp{\hat{x}^{\top}\xi - \beta}\right)
  = \hfcn_{(1, 0)}(\hat{x}^{\top}\xi)
  \geq \min_{(\ell, \rho) \in L_1} \hfcn_{(\ell, \rho)}(\hat{x}^{\top}\xi),
\end{align*}
where the first equality is because \(f(0) = 0\), the second equality is because
\(\fcnl_1(z) = f(1)z\), and the third equality is because
\(\fcnl_1((\hat{x}^{\top}\xi - \beta)^+) = (\fcnl_1(\hat{x}^{\top}\xi - \beta))^+\).
If \(\hat{\ell} := \hat{x}^{\top}\xi - \beta \geq 1\), then
\begin{align*}
  f \left(\posp{\hat{x}^{\top}\xi - \beta}\right)
  = f(\hat{\ell})
  = a_{\hat{\ell}} \cdot \hat{\ell} + b_{\hat{\ell}}
  = \posp{\fcnl_{\hat{\ell}} (\hat{x}^{\top} \xi - \beta)}
  \geq \min_{(\ell, \rho) \in L_1} \hfcn_{(\ell, \rho)}(\hat{x}^{\top}\xi),
\end{align*}
where the last equality is because \(a_{\hat{\ell}}, b_{\hat{\ell}} \geq 0\).
Next we show that
\(\displaystyle f ( (x^{\top} \xi - \beta)^+ ) \leq \min_{(\ell,\rho) \in L_1} \hfcn_{(\ell, \rho)}(x^{\top}\xi)\).
Suppose that \((\hat{\ell}, \hat{\rho})\) is a minimizer for
\(\displaystyle \min_{(\ell, \rho) \in L_1} \hfcn_{(\ell, \rho)}(\hat{x}^{\top}\xi)\),
then
\begin{align*}
  \min_{(\ell, \rho) \in L_1} \hfcn_{(\ell, \rho)}(\hat{x}^{\top}\xi)
  & = \hfcn_{(\hat{\ell}, \hat{\rho})}(\hat{x}^{\top}\xi)
    = \posp{\fcnl_{\hat{\ell}}(\hat{x}^{\top} \xi - \beta)} \\
  & = \fcnl_{\hat{\ell}}\left( \posp{\hat{x}^{\top} \xi - \beta} \right)\Ind{\hat{x}^{\top}\xi - \beta \geq 0} +
    \posp{\fcnl_{\hat{\ell}}(\hat{x}^{\top} \xi - \beta)}
    \Ind{\hat{x}^{\top}\xi - \beta < 0} \\
  & \geq f \left(\posp{\hat{x}^{\top}\xi - \beta}\right),
\end{align*}
where the last equality is because \(a_{\hat{\ell}}, b_{\hat{\ell}} \geq 0\) and
the inequality is due to the piecewise linear representation of \(f\) and the
fact that \(f(0)=0\). Therefore,
\(f \left(\posp{x^{\top} \xi - \beta}\right) = \displaystyle \min_{(\ell, \rho) \in L_1} \hfcn_{(\ell, \rho)}(x^{\top}\xi)\).

It remains the show \(\displaystyle \min_{(\ell, \rho) \in L_1} \hfcn_{(\ell, \rho)}(x^{\top}\xi) \leq \hfcn_{{(\hat{\ell}, \hat{\rho})}} (x^{\top}\xi)\) for all \((\hat{\ell}, \hat{\rho}) \in L_2\). We note that
\begin{gather*}
  a_{\hat{\ell}+1} (\hat{\rho} + \hat{\ell})
  \leq a_{\hat{\ell}+1} \cdot \hat{\ell} + b_{\hat{\ell}+1}
  = f(\hat{\ell})
  = a_{\hat{\ell}} \cdot \hat{\ell} + b_{\hat{\ell}}
  \leq a_{\hat{\ell}} (\hat{\rho} + \hat{\ell}),
\end{gather*}
implying
\begin{gather*}
  a_{\hat{\ell}+1}
  \leq \frac{f(\hat{\ell})}{(\hat{\ell} + \hat{\rho})} \leq a_{\hat{\ell}}.
\end{gather*}
Therefore, we have
\begin{align*}
  \hfcn_{{(\hat{\ell}, \hat{\rho})}} (x^{\top}\xi)
  = & \posp{\frac{f(\hat{\ell})}{(\hat{\ell} + \hat{\rho})} \left( x^{\top} \xi - \beta + \hat{\rho} \right)}
      = \posp{f(\hat{\ell}) + \frac{f(\hat{\ell})}{(\hat{\ell} + \hat{\rho})} \left( x^{\top} \xi - \beta - \hat{\ell}\right)} \\
  = & \posp{f(\hat{\ell}) + \frac{f(\hat{\ell})}{(\hat{\ell} +
      \hat{\rho})} \left( x^{\top} \xi - \beta - \hat{\ell}\right)}
      \Ind{x^{\top} \xi - \beta \leq \hat{\ell}} + \\
    & \posp{f(\hat{\ell}) +
      \frac{f(\hat{\ell})}{(\hat{\ell} + \hat{\rho})} \left( x^{\top} \xi - \beta - \hat{\ell}\right)}
      \Ind{x^{\top} \xi - \beta \geq \hat{\ell} + 1} \\
  \geq & \posp{f(\hat{\ell}) + a_{\hat{\ell}} \left( x^{\top} \xi - \beta - \hat{\ell} \right)}
         \Ind{x^{\top} \xi - \beta \leq \hat{\ell}} + \\
    & \posp{f(\hat{\ell}) + a_{{\hat{\ell}+1}} \left( x^{\top} \xi - \beta - \hat{\ell} \right)}
      \Ind{x^{\top} \xi - \beta \geq \hat{\ell} + 1} \\
  = & \posp{\fcnl_{\hat{\ell}}\left( x^{\top}\xi - \beta \right)} \Ind{x^{\top} \xi - \beta \leq \hat{\ell}} +
      \posp{\fcnl_{{\hat{\ell}+1}}\left( x^{\top}\xi - \beta \right)} \Ind{x^{\top} \xi - \beta \geq \hat{\ell} + 1} \\
  \geq & \min_{(\ell, \rho) \in L_1} \hfcn_{(\ell, \rho)} \left( x^{\top}\xi \right).
\end{align*}
\end{proof}
The previous Lemma implies
\begin{align*}
  f \left( \posp{ x^{\top} \xi - \beta} \right)
  = \min_{(\ell, \rho) \in L_1} \hfcn_{(\ell, \rho)}(x^{\top}\xi)
  = \min_{(\ell, \rho) \in L_1 \cup L_2} \hfcn_{(\ell, \rho)}(x^{\top}\xi).
\end{align*}

\edits{Second, we notice that the EPIs arising from
  \(\set{{\color{black}\mathcal{X}_{(\ell, \rho)}} \colon (\ell, \rho) \in L_2}\) and
  \(\set{{\color{black}\mathcal{X}_{(\ell, \rho)}} \colon (\ell, \rho) \in L_1}\) are sufficient
  to describe \(\Co{}(\mathcal{X})\).}
\begin{theorem}%
\label{thm:vi-single-main}
It holds that
  \begin{align*}
    \Co(\seti) =  \bigcap_{(\ell, \rho) \in L_1 \cup L_2} \Co(\seti_{(\ell, \rho)}).
  \end{align*}
\end{theorem}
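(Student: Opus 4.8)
The inclusion $\Co(\seti) \subseteq \bigcap_{(\ell,\rho)\in L_1\cup L_2}\Co(\seti_{(\ell,\rho)})$ is immediate: by Lemma~\ref{lem:vis-frepr}, $f(\posp{x^{\top}\xi-\beta}) \le \hfcn_{(\ell,\rho)}(x^{\top}\xi)$ for every $(\ell,\rho)\in L_1\cup L_2$ and every $x\in\binaries^{n}$, hence $\seti\subseteq\seti_{(\ell,\rho)}$, and taking convex hulls and intersecting yields the claim. For the reverse inclusion, observe first that since $\xi_k=0$ for $k\notin\mathcal{Z}$, the coordinates $x_k$ with $k\notin\mathcal{Z}$ occur in none of $\seti,\seti_{(\ell,\rho)}$; each of these sets therefore factors as $S\times\binaries^{[n]\setminus\mathcal{Z}}$, and $\Co(S\times\binaries^{[n]\setminus\mathcal{Z}})=\Co(S)\times[0,1]^{[n]\setminus\mathcal{Z}}$. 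Thus it suffices to prove the identity after deleting those coordinates, i.e., I may assume $[n]=\mathcal{Z}$, $\xi=\onev$, and set $m:=\abs{\mathcal{Z}}$.

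Next I would make both sides explicit. For each $(\ell,\rho)\in L_1\cup L_2$ one has $\hfcn_{(\ell,\rho)}(\onev^{\top}x)=\posp{\alpha^{\top}x-\alpha_o}$ with $\alpha=\tfrac{f(\ell)}{\ell+\rho}\onev\ge 0$, so by Lemma~\ref{lem:reform-neg-posp-submod} the function $x\mapsto-\hfcn_{(\ell,\rho)}(\onev^{\top}x)$ is submodular; by the polyhedral description of the convex hull of the epigraph of a submodular function (Section~\ref{sec:reform-poly}), reflecting $\theta$ gives
\begin{align*}
  \Co(\seti_{(\ell,\rho)}) = \Set{(\theta,x)\in\reals\times[0,1]^{m} :
  \theta \le \hfcn_{(\ell,\rho)}(0)+\sum_{k=1}^{m}\bigl(\hfcn_{(\ell,\rho)}(k)-\hfcn_{(\ell,\rho)}(k-1)\bigr)x_{\sigma_k},\ \forall\sigma\in\Sigma} =: \Set{(\theta,x):\theta\le H_{(\ell,\rho)}(x)}.
\end{align*}
Since $\hfcn_{(\ell,\rho)}$ is convex, its forward differences are nondecreasing, so $H_{(\ell,\rho)}$ is a symmetric, concave, piecewise-linear function with $H_{(\ell,\rho)}(v)=\hfcn_{(\ell,\rho)}(\onev^{\top}v)$ for $v\in\binaries^{m}$, and by Theorem~\ref{thm:reform-epi-sep} the minimum over $\sigma$ is attained by sorting $x$ nonincreasingly. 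Consequently $\bigcap_{(\ell,\rho)}\Co(\seti_{(\ell,\rho)})=\Set{(\theta,x)\in\reals\times[0,1]^{m}:\theta\le\Psi(x)}$, where $\Psi:=\min_{(\ell,\rho)\in L_1\cup L_2}H_{(\ell,\rho)}$ is again symmetric, concave, and piecewise linear. On the other hand, directly from the definition, $\Co(\seti)=\Set{(\theta,x)\in\reals\times[0,1]^{m}:\theta\le\bar G(x)}$, where $\bar G$ is the concave envelope over $\binaries^{m}$ of $x\mapsto f(\posp{\onev^{\top}x-\beta})$. Hence the theorem is equivalent to the pointwise identity $\bar G=\Psi$ on $[0,1]^{m}$, and the inequality $\bar G\le\Psi$ already follows: $\Psi$ is concave and, by Lemma~\ref{lem:vis-frepr} together with validity of the EPI, $\Psi(v)\ge f(\posp{\onev^{\top}v-\beta})$ for all $v\in\binaries^{m}$, so $\Psi$ dominates the smallest concave majorant $\bar G$.

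It remains to prove $\bar G\ge\Psi$, which is the heart of the matter. Fix $\hat x\in[0,1]^{m}$; by symmetry I may assume $\hat x_1\ge\cdots\ge\hat x_m$, so $\Psi(\hat x)=\min_{(\ell,\rho)}\sum_{r=1}^{m}\bigl(\hfcn_{(\ell,\rho)}(r)-\hfcn_{(\ell,\rho)}(r-1)\bigr)\hat x_r$; let $(\ell^{*},\rho^{*})$ attain this minimum. It suffices to produce a convex combination $\hat x=\sum_{t}\lambda_t v^{t}$ with $v^{t}\in\binaries^{m}$ and $\sum_t\lambda_t f(\posp{\onev^{\top}v^{t}-\beta})\ge\Psi(\hat x)$, since then $(\Psi(\hat x),\hat x)\in\Co(\seti)$. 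The construction I would attempt uses only binary vectors whose cardinality is an integer point where $\hfcn_{(\ell^{*},\rho^{*})}$ agrees with $f(\posp{\cdot-\beta})$ --- namely $\{0,\dots,\beta-\rho^{*}\}\cup\{\ell^{*}+\beta-1,\ell^{*}+\beta\}$ when $(\ell^{*},\rho^{*})\in L_1$, and $\{0,\dots,\beta-\rho^{*}\}\cup\{\ell^{*}+\beta\}$ when $(\ell^{*},\rho^{*})\in L_2$ --- distributed in the ``staircase'' order dictated by the sorted coordinates of $\hat x$; on such a support $f(\posp{\onev^{\top}v^{t}-\beta})=\hfcn_{(\ell^{*},\rho^{*})}(\onev^{\top}v^{t})$ for each $t$, so the value of the decomposition is a convex combination of the quantities $\hfcn_{(\ell^{*},\rho^{*})}(k)$ over these cardinalities $k$, and the aim is to arrange the decomposition so that this combination reaches $H_{(\ell^{*},\rho^{*})}(\hat x)=\Psi(\hat x)$ --- note that a decomposition using only the two extreme cardinalities would merely give the strictly smaller value $\hfcn_{(\ell^{*},\rho^{*})}(\onev^{\top}\hat x)$. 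The main obstacle, and the reason this is developed through the auxiliary lemmas of Section~\ref{sec:proofs-of-propositions}, is to show that the minimizer $(\ell^{*},\rho^{*})$ singled out by $\Psi(\hat x)$ is always compatible with such a feasible staircase decomposition of $\hat x$ on its touch-point cardinalities; this requires a case analysis according to the value of $\onev^{\top}\hat x$ and the shape of $\hat x$ relative to $\beta$ and the breakpoints $b_\ell/a_\ell$, which is exactly where the definitions of $L_1$ and $L_2$, and the roles of $[\beta]$ and of the intervals $(b_\ell/a_\ell,b_{\ell+1}/a_{\ell+1})$, become essential.
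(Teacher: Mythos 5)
Your reduction is set up correctly and is a genuinely different (primal) route from the paper's: you rewrite both sides as hypographs over $[0,1]^{m}$, so that the theorem becomes the pointwise identity $\bar G=\Psi$ between the concave envelope $\bar G$ of $x\mapsto f\bigl((\onev^{\top}x-\beta)^+\bigr)$ and the minimum $\Psi$ of the Lov\'asz-type extensions $H_{(\ell,\rho)}$, whereas the paper argues by contraposition with a separating hyperplane $(1,\pi)$, restricts $\pi$ to the structured set $\Pi^{*}$ (Propositions~\ref{prop:vi-opt-conds-1} and~\ref{prop:vi-opt-struct}) and then exchanges max and min (Proposition~\ref{prop:vi-min-max}) to transfer the separation to some $\Co(\seti_{(\ell,\rho)})$. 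The parts you actually prove --- the inclusion $\Co(\seti)\subseteq\bigcap\Co(\seti_{(\ell,\rho)})$, the projection onto the $\mathcal{Z}$-coordinates, the polymatroid description of each $\Co(\seti_{(\ell,\rho)})$, and $\bar G\le\Psi$ --- are all fine. But the hard direction $\bar G\ge\Psi$, which carries the entire content of the theorem and is exactly what the paper's Propositions~\ref{prop:vi-opt-conds-1}--\ref{prop:vi-min-max} supply, is not proved: you describe ``the construction I would attempt'' and explicitly defer ``the main obstacle'' (compatibility of the minimizer $(\ell^{*},\rho^{*})$ with a feasible decomposition) to an unspecified case analysis. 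That deferred step is the whole difficulty, so as it stands this is a proof of one inclusion plus a correct reformulation of the other.

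Moreover, the sketch cannot be carried out literally as stated. A decomposition ``in the staircase order dictated by the sorted coordinates'' (i.e., with nested level sets) supported on the touch-point cardinalities of $(\ell^{*},\rho^{*})$ need not exist. In the setting of Example~\ref{exam:f-h} ($m=4$, $\beta=2$, $f(z)=\sqrt z$), take $\hat x=(1,0.9,0.5,0.1)$: the minimizer is $(\ell^{*},\rho^{*})=(1,0)$ with $\Psi(\hat x)=\hat x_{3}+\hat x_{4}=0.6$, and its touch points are the cardinalities $\{0,1,2,3\}$, of which only cardinality $3$ has positive value ($f(1)=1$). Any decomposition supported on these cardinalities that attains $0.6$ must therefore put total weight $0.6$ on cardinality-$3$ sets; in a nested family all cardinality-$3$ sets coincide, forcing either $\hat x_{4}\ge 0.6$ or $\hat x_{3}\ge 0.6$, a contradiction. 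A valid certificate does exist --- e.g.\ weight $0.5$ on $\{1,2,3\}$, $0.1$ on $\{1,2,4\}$, $0.3$ on $\{1,2\}$, $0.1$ on $\{1\}$ --- but it is not a chain: one must be allowed to split a cardinality level across non-nested sets, and proving that such a certificate always exists and attains $H_{(\ell^{*},\rho^{*})}(\hat x)$ for every $\hat x$ and every shape of $f$, $\beta$, $L_1$, $L_2$ is precisely the missing argument (the primal counterpart of the paper's Propositions~\ref{prop:vi-opt-struct} and~\ref{prop:vi-min-max}). Until that construction is supplied in full generality, the proposal has a genuine gap.
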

\edits{Before presenting the proof, we discuss the implication of Theorem~\ref{thm:vi-single-main} on the
  separation of \(\Co(\mathcal{X})\). To this end, by Theorem~\ref{thm:vi-single-main}, we only need to separate from
  \(\Co(\mathcal{X}_{(\ell, \rho)})\) for all \((\ell, \rho) \in L_1 \cup L_2\),
  where \(|L_1| = |\mathcal{Z}| - \beta\), \(|L_2| \leq \beta\) and so
  \(|L_1| + |L_2| \leq |\mathcal{Z}| \leq n\). In addition, for each
  \((\ell, \rho)\), \(h_{(\ell, \rho)}(x^{\top}\xi)\) is supermodular in \(x\)
  by Lemma~\ref{lem:reform-neg-posp-submod}. Hence, by
  Theorem~\ref{thm:reform-epi-sep}, we can separate from each 
  \(\Co(\mathcal{X}_{(\ell, \rho)})\) by running a sorting algorithm among the
  \(n\) entries of a given solution \(\hat{x}\). In addition, the resulting order is
  valid for all \((\ell, \rho)\). Therefore, we can separate from
  \(\Co(\mathcal{X})\) by running a \emph{single} sorting and checking the
  violation of at most \(n\) inequalities.}

\begin{proof}[Proof of Theorem~\ref{thm:vi-single-main}]
The proof is established on
Propositions~\ref{prop:vi-opt-conds-1},~\ref{prop:vi-opt-struct},
and~\ref{prop:vi-min-max}. We present their statements and relegate the proofs
to Section~\ref{sec:proofs-of-propositions}. We denote the intersection on the
\RHS{} of the claim as \(\seti^{\prime}\). By Lemma~\ref{lem:vis-frepr}, any
\((\hat{\theta}, \hat{x}) \in \seti\) satisfies
\begin{align*}
  \hat{\theta}
  \leq f \left( \posp{\hat{x}^{\top} \xi - \beta} \right)
  = \min_{(\ell, \rho) \in L_1 \cup L_2} \hfcn_{(\ell, \rho)}(\hat{x}^{\top}\xi).
\end{align*}
Then, \((\hat{\theta}, \hat{x}) \in \seti_{(\ell, \rho)}\) for all
\((\ell, \rho) \in L_{1} \cup L_2\), implying
\((\hat{\theta}, \hat{x}) \in \seti^{\prime}\) and
\(\Co(\seti) \subseteq \seti'\). It remains to prove
\(\seti' \subseteq \Co(\seti)\). Equivalently, we prove that
\((\hat{\theta}, \hat{x}) \notin \Co(\seti)\) implies
\((\hat{\theta}, \hat{x}) \notin \Co(\seti_{(\ell, \rho)})\) for some
\((\ell, \rho) \in L_1 \cup L_2\).

First, by the hyperplane separation theorem, there exists nonzero
\((\pi_0, \pi) \in \reals^{1+n}\) satisfying
\begin{align*}
\pi_0 \hat{\theta} + \pi^{\top} \hat{x}
> \pi_0 \theta + \pi^{\top} x, \quad \forall (\theta, x) \in \seti{}.
\end{align*}
If \(\hat{x} \notin [0, 1]^n\) then
\((\hat{\theta}, \hat{x}) \notin \Co(\seti_{(\ell, \rho)})\) because
\(\Co(\seti_{(\ell, \rho)}) \subseteq \mathbb{R}\times[0, 1]^n\). Hence, we can
assume \(\hat{x} \in [0, 1]^n\) \WLOG{} Since
\(\theta\) is unbounded below in \(\seti\), \(\pi_0\) should be nonnegative.
If \(\pi_0 = 0\) then \(\pi^{\top} \hat{x} > \pi^{\top} x\) for all
\(x \in \binaries^{n}\), a contradiction, because this nonzero \(\pi\) separates
\(\hat{x}\) from \([0, 1]^n\). Hence, \(\pi_0 > 0\). We can further assume
\(\pi_0 = 1\) \WLOG{} and arrive at
\begin{align*}
  \exists \pi \in \mathbb{R}^n: \quad \hat{\theta} + \pi^{\top} \hat{x}
  > \theta + \pi^{\top} x, \quad \forall (\theta, x) \in \seti{}.
\end{align*}
In other words, \(\hat{\theta}\) exceeds the optimal value of the following
formulation:
\begin{align}
  \min_{\pi \in \reals^{n}} \Set{
      \max \Set{
          \theta + \pi^{\top} x : (\theta, x) \in \seti{}
      } - \pi^{\top} \hat{x}
  }. \label{eqn:vi-main-theta-x}
\end{align}
\edits{The rest of the proof solves problem~\eqref{eqn:vi-main-theta-x}. The main idea can
be summarized as follows:
\begin{align*}
  & \min_{\pi \in \reals^{n}} \Set{
      \max_{(\theta, x) \in \seti{}} \Set{\theta + \pi^{\top} x} - \pi^{\top} \hat{x}} \\
  = & \min_{\pi \in \Pi^*} \Set{
    \max_{(\theta, x) \in \seti{}} \Set{\theta + \pi^{\top} x} -
    \pi^{\top} \hat{x}} \tag{by Propositions~\ref{prop:vi-opt-conds-1},~\ref{prop:vi-opt-struct}} \\
  = & \min_{\pi \in \Pi^*} \Set{ \min_{(\ell, \rho) \in L_1 \cup L_2} \left[
      \max_{(\theta, x) \in \seti{}_{(\ell, \rho)}} \Set{\theta + \pi^{\top} x} -
      \pi^{\top} \hat{x} \right]} \tag{by Proposition~\ref{prop:vi-min-max}},
\end{align*}
where \(\Pi^*\) is a subset of \(\reals^n\) given by the optimality conditions
derived in Propositions~\ref{prop:vi-opt-conds-1} and~\ref{prop:vi-opt-struct}. Proposition~\ref{prop:vi-min-max} relates \(\seti\) with
\(\set{\seti_{(\ell, \rho)}}_{(\ell, \rho) \in L_1 \cup L_2}\). Hence, if $\hat{\theta}$ exceeds the optimal value of~\eqref{eqn:vi-main-theta-x}, there exist \(\pi' \in \Pi^*\) and \((\ell', \rho') \in L_1 \cup L_2\) that separates
\((\hat{\theta}, \hat{x})\) from \(\seti_{(\ell', \rho')}\).}

Let \(\set{\sigma_i}_{i = 1}^{\abs{\mathcal{Z}}}\) be
a permutation of \(\mathcal{Z} \equiv \{i \in [n]: \xi_i = 1\}\) such that
\(\hat{x}_{\sigma_1} \geq \hat{x}_{\sigma_2} \geq \cdots \geq \hat{x}_{\sigma_{|\mathcal{Z}|}}\).
For the outer minimization problem in formulation~\eqref{eqn:vi-main-theta-x},
we derive the following optimality conditions.
\begin{proposition}\label{prop:vi-main-opt-conds}
There exists an optimal solution \(\pi\) to
formulation~\eqref{eqn:vi-main-theta-x} satisfying the following:
\begin{enumerate}
\item \(\pi_{\sigma_1} \geq \pi_{\sigma_2} \geq \cdots \geq \pi_{\sigma_{\abs{\mathcal{Z}}}}\);
\item \(\pi \leq 0\), \(\pi_{\sigma_{\beta+1}} \geq -a_1\), and \(\pi_i = 0\) for all \(i \not \in \mathcal{Z}\).\qed
\end{enumerate}
\end{proposition}
Hence, it suffices to consider those \(\pi \in \reals^n_-\)
with the same ordering as \(\hat{x}\) on \(\mathcal{Z}\). It follows that
\begin{align}
  \max_{(\theta, x) \in \seti{}}\left\{\theta + \pi^{\top} x\right\} \
  & = \ \max_{T \in \{0\}\cup [\abs{\mathcal{Z}}]}\max_{x^{\top}\xi = T}\left\{ f\left((x^{\top}\xi - \beta)^+\right) + \sum_{i \in \mathcal{Z}} \pi_i x_i\right\} \nonumber \\
  & = \ \max_{T \in \{0\}\cup [\abs{\mathcal{Z}}]} \Set{
    f \left( \posp{ T - \beta } \right) + \sum_{i=1}^{T} \pi_{\sigma_i}} \label{eqn:vi-inner-f-sum} \\
  & = \ \max_{T \in \{0\}\cup [\abs{\mathcal{Z}}]} \Set{
    \min_{(\ell, \rho) \in L_1 \cup L_2} h_{(\ell, \rho)}(T) + \sum_{i=1}^{T} \pi_{\sigma_i}}, \nonumber
\end{align}
where the first equality uses the definition of \(\seti\) and the optimality
condition \(\pi_i = 0\) for all \(i \notin \mathcal{Z}\), the second equality
uses the ordering of \(\pi\) on \(\mathcal{Z}\), and the last equality follows
from Lemma~\ref{lem:vis-frepr}.

Second, we show that there exists a set \(\Pi^* \subseteq \mathbb{R}^n\) of
candidate optimal solutions such that a \(\pi \in \Pi^*\) solves
problem~\eqref{eqn:vi-main-theta-x}. In addition, we show that the following
minimax-type claim holds for all \(\pi \in \Pi^*\):
\begin{align*}
  \max_{T \in \{0\}\cup [\abs{\mathcal{Z}}]} \Set{
  \min_{(\ell, \rho) \in L_1 \cup L_2} h_{(\ell, \rho)}(T) + \sum_{i=1}^{T} \pi_{\sigma_i}}
  = \min_{(\ell, \rho) \in L_1 \cup L_2}
  \max_{T \in \{0\}\cup [\abs{\mathcal{Z}}]} \left\{\hfcn_{(\ell, \rho)}(T) + \sum_{i=1}^{T} \pi_{\sigma_i}\right\},
\end{align*}
\edits{To this end, we consider the one-dimensional maximization
problem~\eqref{eqn:vi-inner-f-sum} and observe that its objective function is
nonincreasing on \(\set{0} \cup [\beta]\) since
\(f \left( \posp{T - \beta} \right)\) is always \(0\).}
In addition, if
\(T \in \{\beta+1, \ldots, |\mathcal{Z}|\}\), it has decreasing increments
\(a_{i-\beta}+\pi_{\sigma_i}\) for any \(\beta + 1 \leq i \leq |\mathcal{Z}|\)
because \(f\) is concave. Since \(a_1 + \pi_{\sigma_{\beta + 1}} \geq 0\) by
Proposition~\ref{prop:vi-opt-conds-1}, we define \(t\) to be the last occurrence
of nonnegative increment, i.e.,
\begin{align*}
  t :=
  \begin{cases}
  \max \left\{i \in \{\beta+1, \ldots, |\mathcal{Z}|-1\}:
    a_{i - \beta} + \pi_{\sigma_i} \geq 0,
    a_{i - \beta + 1} + \pi_{\sigma_{i + 1}} \leq 0\right\}
  & \text{ if  } a_{\abs{\mathcal{Z}} - \beta} + \pi_{\sigma_{\abs{\mathcal{Z}}}} \leq 0 \\
    \abs{\mathcal{Z}}
  & \text{ if  } a_{\abs{\mathcal{Z}} - \beta} + \pi_{\sigma_{\abs{\mathcal{Z}}}} > 0.
  \end{cases}
\end{align*}
Then, with any fixed \(\pi\) satisfying the optimality conditions in
Proposition~\ref{prop:vi-opt-conds-1}, a maximizer \(T^*\) of
problem~\eqref{eqn:vi-inner-f-sum} is either \(0\) or \(t\), yielding the
optimal value
\begin{equation*}
    \left( \sum_{i = 1}^{\beta} \pi_{\sigma_i} +
\sum_{i = \beta + 1}^t (a_{i - \beta} + \pi_{\sigma_i}) \right)^+.
\end{equation*}
This allows us to represent problem~\eqref{eqn:vi-main-theta-x} as follows by
partitioning the feasible region of \(\pi\):
\begin{align*}
\eqref{eqn:vi-main-theta-x}
& = \min_{\substack{t \in \integers{}, \\ \beta + 1 \leq t \leq \abs{\mathcal{Z}}} }
  \min_{\pi \in \Pi_t} \Set{
    \left( \sum_{i = 1}^{\beta} \pi_{\sigma_i} +
  \sum_{i = \beta + 1}^t (a_{i - \beta} + \pi_{\sigma_i}) \right)^+ -
  \sum_{i = 1}^{\abs{\mathcal{Z}}} \pi_{\sigma_i} \hat{x}_{\sigma_i}
  }, \\
\mathrm{where} \quad \Pi_t
& := \Set{
  \pi \in \reals^n :
    \begin{aligned}
    & \text{the last occurrence of } a_{i - \beta} + \pi_{\sigma_i} \geq 0 \text{ is at } t \\
    & 0 \geq \pi_{\sigma_1} \geq \pi_{\sigma_2} \geq \cdots \geq \pi_{\sigma_{\abs{\mathcal{Z}}}} \\
    & \pi_i = 0, \quad \forall i \notin \mathcal{Z}
    \end{aligned}
  }, \quad \forall \beta + 1 \leq t \leq |\mathcal{Z}|.
\end{align*}
For each \(t\), we further identify the structure of an optimal \(\pi^* \in \Pi_t\).
\begin{proposition}
For all \(t \in \{\beta+1, \ldots, |\mathcal{Z}|\}\), there exists an optimal
solution \(\pi^*\) to the inner minimization problem
\begin{align*}
\min_{\pi \in \Pi_t} \Set{
  \left( \sum_{i = 1}^{\beta} \pi_{\sigma_i} +
  \sum_{i = \beta + 1}^t (a_{i - \beta} + \pi_{\sigma_i}) \right)^+ -
  \sum_{i = 1}^{\abs{\mathcal{Z}}} \pi_{\sigma_i} \hat{x}_{\sigma_i}}
\end{align*}
that has the following structure:
\begin{enumerate}
\item If \(b_{t - \beta} - \beta a_{t - \beta} \geq 0\), then
    $\pi^*_{\sigma_1} = \pi^*_{\sigma_2} = \cdots = \pi^*_{\sigma_{\abs{\mathcal{Z}}}} = -a_{t - \beta}$.
\item If \(b_{t - \beta} - \beta a_{t - \beta} < 0\), then \(\pi^*\) takes one of the following two forms:
\begin{enumerate}
\item[(A)] Let \(\tilde{t} := \beta - \frac{b_{t - \beta}}{a_{t - \beta}}\). Then,
    \begin{align*}
      0 = \pi^*_{\sigma_1} = \cdots = \pi^*_{\sigma_{\floor{\tilde{t}}}} \geq
      \pi^*_{\sigma_{\ceil{\tilde{t}}}} = -a_{t - \beta} (\ceil{\tilde{t}} - \tilde{t}) >
      \pi^*_{\sigma_{\ceil{\tilde{t}}} + 1} = \cdots = \pi^*_{\sigma_{\abs{\mathcal{Z}}}} = -a_{t - \beta}.
    \end{align*}
\item[(B)] If there exists an integer
    \(t^{\prime} \in \left( \beta - \frac{b_{t - \beta + 1}}{a_{t - \beta + 1}},
      \beta - \frac{b_{t - \beta}}{a_{t - \beta}} \right) \cap \mathbb{Z}_+\){\color{black},} then\\[0.2cm]
    $\displaystyle 0 = \pi^*_{\sigma_1} = \cdots = \pi^*_{\sigma_{t^{\prime}}} > \pi^*_{\sigma_{t^{\prime} + 1}}
    = \cdots = \pi^*_{\sigma_{\abs{\mathcal{Z}}}} = - \frac{f(t - \beta)}{(t - t^{\prime})}$.\qed
\end{enumerate}
\end{enumerate}
\end{proposition}
Accordingly, for all \(t \in \{\beta + 1, \ldots, \abs{\mathcal{Z}}\}\), if
\(b_{t - \beta} - \beta a_{t - \beta} \geq 0\), then we define
\begin{align*}
  \Pi^*_t := \left\{\pi \in \reals^n:
  \pi_{\sigma_1} = \pi_{\sigma_2} = \cdots = \pi_{\sigma_{\abs{\mathcal{Z}}}} = - a_{t - \beta}, \ \pi_i = 0,  \forall i \notin \mathcal{Z}\right\};
\end{align*}
and if \(b_{t - \beta} - \beta a_{t - \beta} < 0\), then we define
\begin{align*}
  \Pi^*_t := \left\{\pi \in \reals^n:
    \begin{aligned}
      & \pi_{\sigma_1} = \pi_{\sigma_2} = \cdots = \pi_{\sigma_{\floor{\tilde{t}}}} = 0, \\
      & \pi_{\sigma_{\ceil{\tilde{t}}}} = -a_{t - \beta} (\ceil{\tilde{t}} - \tilde{t}), \\
      & \pi_{\sigma_{\ceil{\tilde{t}} + 1}} = \cdots = \pi_{\sigma_{\abs{\mathcal{Z}}}} = -a_{t - \beta}\\
      & \pi_i = 0, \quad \forall i \notin \mathcal{Z}
    \end{aligned}\right\}
        \cup \left\{ \pi \in \reals^n:
    \begin{aligned}
      & \pi_{\sigma_1} = \cdots = \pi_{\sigma_{t^{\prime}}} = 0, \\
      & \pi_{\sigma_{t^{\prime} + 1}} = \cdots \pi_{\sigma_{\abs{\mathcal{Z}}}}
      = - \frac{f(t - \beta)}{(t - t^{\prime})}, \\
      & \pi_i = 0, \quad \forall i \notin \mathcal{Z} \\
      & t^{\prime} \in \left( \beta - \frac{b_{t - \beta + 1}}{a_{t - \beta + 1}}, \beta - \frac{b_{t - \beta}}{a_{t - \beta}} \right)\cap \mathbb{Z}_+
    \end{aligned}\right\}.
\end{align*}
Note that if the interval
\(( \beta - b_{t - \beta + 1}/a_{t - \beta + 1}, \beta - b_{t - \beta}/a_{t - \beta} )\)
does not contain an integer, then the second component of the above union equals
\(\varnothing\). Let \(\displaystyle \Pi^* := \bigcup_t \Pi^*_t\), then
Proposition~\ref{prop:vi-opt-struct} shows that \(\Pi_t\) can be replaced by
\(\Pi_t^*\) without loss of optimality. In other words,
\begin{align*}
\eqref{eqn:vi-main-theta-x}
& = \min_{\substack{t \in \integers{}, \\ \beta + 1 \leq t \leq \abs{\mathcal{Z}}} }
  \min_{\pi \in \Pi^*_t} \Set{
    \left( \sum_{i = 1}^{\beta} \pi_{\sigma_i} +
  \sum_{i = \beta + 1}^t (a_{i - \beta} + \pi_{\sigma_i}) \right)^+ -
  \sum_{i = 1}^{\abs{\mathcal{Z}}} \pi_{\sigma_i} \hat{x}_{\sigma_i}
  } \\
& = \min_{\pi \in \Pi^*} \Set{
    \max_{T \in \{0\}\cup [\abs{\mathcal{Z}}]} \Set{
        f(\posp{T - \beta}) + \sum_{i = 1}^T \pi_{\sigma_i}
    } - \sum_{i = 1}^{\abs{\mathcal{Z}}} \pi_{\sigma_i} \hat{x}_{\sigma_i} }.
\end{align*}
Furthermore, Proposition~\ref{prop:vi-min-max} shows that, for all \(\pi \in \Pi^*\),
\edits{
\begin{align*}
  \max_{T \in \{0\}\cup [\abs{\mathcal{Z}}]} \Set{
  f \left( \posp{ T - \beta } \right) + \sum_{i=1}^{T} \pi_{\sigma_i}}
  = \min_{(\ell, \rho) \in L_1 \cup L_2}
  \max_{T \in \{0\}\cup [\abs{\mathcal{Z}}]} \left\{ \hfcn_{(\ell, \rho)}(T) + \sum_{i=1}^{T} \pi_{\sigma_i}\right\}.
\end{align*}
}
It follows that, for all \(\pi \in \Pi^*\),
\begin{align*}
  \max_{T \in \{0\}\cup [\abs{\mathcal{Z}}]} \Set{
  f \left( \posp{ T - \beta } \right) + \sum_{i=1}^{T} \pi_{\sigma_i}} - \sum_{i = 1}^{\abs{\mathcal{Z}}} \pi_{\sigma_i} \hat{x}_{\sigma_i}
  & = \min_{(\ell, \rho) \in L_1 \cup L_2}
  \max_{x \in \binaries^n} \Set{\hfcn_{(\ell, \rho)}(x^{\top}\xi) + \pi^{\top}x} - \pi^{\top} \hat{x} \\
  & = \min_{(\ell, \rho) \in L_1 \cup L_2} \max_{(\theta, x) \in \seti_{(\ell, \rho)}} \Set{\theta + \pi^{\top}x} - \pi^{\top} \hat{x},
\end{align*}
where the first equality is because \(\pi\) satisfies the optimality conditions
in Proposition \ref{prop:vi-opt-conds-1}. Thus, the optimal value of
problem~\eqref{eqn:vi-main-theta-x} satisfies
\begin{align*}
\hat{\theta}
& > \min_{\pi \in \Pi^*} 
\min_{(\ell, \rho) \in L_1 \cup L_2} \Set{
\max_{(\theta, x) \in \seti_{(\ell, \rho)}} \Set{\theta + \pi^{\top}x} - \pi^{\top} \hat{x}}.
\end{align*}
That is, there exist \((\ell, \rho) \in L_1 \cup L_2\) and \(\pi' \in \Pi^*\) such that
\begin{align*}
\hat{\theta} + \hat{x}^{\top} \pi' > \theta + x^{\top} \pi',
\quad \forall (\theta, x) \in \seti_{(\ell, \rho)}.
\end{align*}
Therefore, \((1, \pi')\) separates \((\hat{\theta}, \hat{x})\) from the set
\(\seti_{(\ell, \rho)}\), implying
\((\hat{\theta}, \hat{x}) \not \in \Co{} (\seti_{(\ell, \rho)})\). This finishes
the proof.
\end{proof}

When applying the BD algorithm to solve (\DRCCbC{}), we add a Benders
feasibility cut \eqref{eqn:reform-feasi-base} if the incumbent solution
\((\hat{x}, \hat{\gamma}, \hat{z})\) is infeasible to \((\dsubp_j)\). In the
meantime, we add more inequalities by separating
\((\hat{x}, \hat{\gamma}, \hat{z})\) from \(\Co(\seti{}^j_{i^*})\) with
\begin{align*}
  \seti{}^j_{i^*} := \Set{(x, \gamma, z_j) \in \binaries^{n} \times \reals^2:
  z_j + \gamma \leq \left(\posp{x^{\top}\hat{\xi}^j_{i^*} - v_{i^*} + 1}\right)^{1/p}},
\end{align*}
where \(i^*\) is identified in Proposition \ref{prop:reform-closed-form-sol}.
Since \((\cdot)^{1/p}\) is strictly concave and increasing, \(\seti{}^j_{i^*}\)
is isomorphic to \(\seti\) with \(z_j + \gamma\) and \(v_{i^*}-1\) playing the
roles of \(\theta\) and \(\beta\), respectively. As a consequence, separating
from \(\Co(\seti{}^j_{i^*})\) is efficient. In addition, with regard to any \((\ell, \rho)\),
a valid inequality takes the following form:
\begin{align}
  & - z_j - \gamma
  \geq \Phi^j(\varnothing) +
    \sum_{k = 1}^n \left[ \Phi^j(\mathcal{T}_k) - \Phi^j(\mathcal{T}_{k - 1}) \right] x_{\sigma_k},
    \label{eqn:cut-single}\\
  \mathrm{where} \quad
  & \Phi^j(x) := \frac{\ell^{1/p}}{\ell+\rho}\min\left\{
    -x^{\top}\hat{\xi}^j_{i^*} - v_{i^*}-\rho+1, \ 0\right\}, \nonumber{} \\
  \quad
  & \Phi^j(\mathcal{T}_k) - \Phi^j(\mathcal{T}_{k - 1}) =
  \begin{cases}
  - \frac{\ell^{1/p}}{\ell+\rho} & \text{ if } \Phi^j(\mathcal{T}_{k - 1}) < 0 \\
  \Phi^j(\mathcal{T}_k) & \text{ if } \Phi^j(\mathcal{T}_{k - 1}) = 0 \text{ and } \Phi^j(\mathcal{T}_k) < 0 \\
  0 & \text{ if } \Phi^j(\mathcal{T}_{k}) = 0,
  \end{cases} \nonumber{}
\end{align}
and \(\sigma\) is a permutation of \([n]\) such that
\(\hat{x}_{\sigma_1} \geq \hat{x}_{\sigma_2} \geq \cdots \geq \hat{x}_{\sigma_n}\).
In our implementation, we perform this separation via the lazy callback.

\subsection{Proofs of Propositions~\ref{prop:vi-opt-conds-1},~\ref{prop:vi-opt-struct},
  and~\ref{prop:vi-min-max}}%
\label{sec:proofs-of-propositions}

\setcounter{prop}{7}
\begin{proposition}%
\label{prop:vi-opt-conds-1}
There exists an optimal solution \(\pi\) to
formulation~\eqref{eqn:vi-main-theta-x} satisfying the following:
\begin{enumerate}
\item \(\pi_{\sigma_1} \geq \pi_{\sigma_2} \geq \cdots \geq \pi_{\sigma_{\abs{\mathcal{Z}}}}\);
\item \(\pi \leq 0\), \(\pi_{\sigma_{\beta+1}} \geq -a_1\), and \(\pi_i = 0\) for all \(i \not \in \mathcal{Z}\).
\end{enumerate}
\end{proposition}
\begin{proof}
First, for any \(\pi \in \reals^n\) let \(\set{\sigma'_i: i \in \mathcal{Z}}\),
possibly different from \(\set{\sigma_i: i \in \mathcal{Z}}\), be a permutation
of \(\mathcal{Z} \equiv \{i \in [n]: \xi_i = 1\}\) such that
\(\pi_{\sigma'_1} \geq \cdots \geq \pi_{\sigma'_{|\mathcal{Z}|}}\). Then, we
recast formulation~\eqref{eqn:vi-main-theta-x} as
\begin{align}
& \min_{\pi \in \reals^{n}} \Set{
      \max \Set{
          \theta + \pi^{\top} x : (\theta, x) \in \seti{}
      } - \pi^{\top} \hat{x}
} \nonumber \\
= \ & \min_{\pi \in \reals^{n}} \Set{
  \max_{x \in \binaries^n} \Set{
    f \left( \posp{ x^{\top}\xi - \beta } \right) + \sum_{i\in \mathcal{Z}} \pi_{\sigma'_i} x_{\sigma'_i} +
    \sum_{i \not \in \mathcal{Z}} \pi_i x_i} - \pi^{\top} \hat{x}
} \nonumber \\
= \ & \min_{\pi \in \reals^{n}} \left\{
  \max_{T \in \{0\} \cup [|\mathcal{Z}|]}\max_{x^{\top}\xi = T} \Set{
    f \left( \posp{ x^{\top}\xi - \beta } \right) + \sum_{i\in \mathcal{Z}} \pi_{\sigma'_i} x_{\sigma'_i} +
    \sum_{i \not \in \mathcal{Z}} \pi_i x_i} - \pi^{\top} \hat{x}
\right\} \nonumber \\
= \ & \min_{\pi \in \reals^{n}} \left\{
  \max_{T \in \{0\} \cup [|\mathcal{Z}|]} \Set{
    f \left( \posp{ T - \beta } \right) + \sum_{i=1}^{T} \pi_{\sigma'_i}} +
    \sum_{i \not \in \mathcal{Z}} \posp{\pi_i} - \pi^{\top} \hat{x}
\right\}. \label{eqn:vi-main-f-sum}
\end{align}
We prove the following optimality conditions:
\begin{description}
\item[Condition 1.] Suppose that \(\pi \in \reals^n\) is such that
    \(\pi_{\sigma_j} < \pi_{\sigma_k}\) for some \(j, k \in [\abs{\mathcal{Z}}]\) and \(j < k\). We show swapping \(\pi_{\sigma_j}\) and \(\pi_{\sigma_k}\)
    yields a lower objective value of \eqref{eqn:vi-main-f-sum}. Specifically, define \(\pi'\) as
    \begin{gather*}
    \pi'_{\sigma_i} :=
    \begin{cases}
    \pi_{\sigma_{k}} & \mbox{if } i = j, \\
    \pi_{\sigma_{j}} & \mbox{if } i = k, \\
    \pi_{\sigma_i} & \mbox{\ow{}} \\
    \end{cases}
    \quad{} \forall i \in \mathcal{Z}, \\
    \pi'_i := \pi_i \quad \forall i \not \in \mathcal{Z}.
    \end{gather*}
    Since \(\pi, \pi'\) differ only in ordering, the optimal value of the
    inner maximization problem of \eqref{eqn:vi-main-f-sum} remains the same after swapping.
    Then, the difference between their objective values equals
    \begin{align*}
      - \hat{x}^{\top}\pi - \left( -\hat{x}^{\top}\pi' \right)
      & = - ( \pi_{\sigma_j} \hat{x}_{\sigma_j} + \pi_{\sigma_k} \hat{x}_{\sigma_k} ) +
        ( \pi'_{\sigma_j} \hat{x}_{\sigma_j} + \pi'_{\sigma_k} \hat{x}_{\sigma_k} ) \\
      & = (\hat{x}_{\sigma_j} - \hat{x}_{\sigma_k}) (\pi_{\sigma_k} - \pi_{\sigma_j}) \geq 0.
    \end{align*}
    Therefore, there exists an optimal
    \(\pi\) that shares the same ordering with \(\hat{x}\) on \(\mathcal{Z}\).

\item[Condition 2.]\label{it:vi-main-cond2} To prove that the entries of an optimal \(\pi\) on \([n] \setminus \mathcal{Z}\)
      are all zero, we show that replacing any nonzero entry \(\pi_j\),
      \(j \notin \mathcal{Z}\), with \(0\) lowers the objective value.
      Specifically, define \(\pi'\) to be the \(\pi\) with \(\pi_j\) replaced by zero.
      This yields a difference in objective value
      \begin{align*}
        \left(\posp{\pi_j} - \pi_j \hat{x}_j\right) - 0
        = \pi_j(1 - \hat{x}_j) \Ind{\pi_j > 0} -
        \pi_j \hat{x}_j \Ind{\pi_j \leq 0 }
        \geq 0.
      \end{align*}
      To prove \(\pi \leq 0\) without loss of optimality, it suffices to show that \(\pi\) is
      nonpositive on \(\mathcal{Z}\). Suppose that \(\pi\) has strictly positive entries on
      \(\mathcal{Z}\) and let \(\pi_{\sigma_j}\) be the last such entry, i.e., either \(\pi_{\sigma_j} >
      0, \pi_{\sigma_{j+1}} \leq 0\) or \(\pi_{\sigma_{|\mathcal{Z}|}} > 0\) with \(j = |\mathcal{Z}|\).
      We show that replacing \(\pi_{\sigma_j}\) with \(0\) lowers the objective value. Specifically,
      let \(\pi'\) be the \(\pi\) with \(\pi_{\sigma_j}\) replaced by \(0\).
      Then, \(\pi\) and \(\pi'\) share the same ordering. In addition, both \(\pi_{\sigma_i}\)
      and \(\pi'_{\sigma_i}\) are nonnegative for all \(i \in [j]\). As a result, since \(f\) is
      increasing, a maximizer \(T^*\) of the inner maximization problem of~\eqref{eqn:vi-main-f-sum}
      must be at least \(j\) with respect to both \(\pi\) and \(\pi'\). It follows that
      \(\pi\) and \(\pi'\) yield the same maximizer \(T^*\). Therefore, the difference in
      objective value equals
      \begin{align*}
        \pi_{\sigma_j}(1 - \hat{x}_{\sigma_j}) - 0 \geq 0,
      \end{align*}
      implying that the objective value evaluated at \(\pi'\) is smaller.
      \edits{To prove \(\pi_{\sigma_{\beta+1}} \geq -a_1\), we note that if
      \(\pi_{\sigma_{\beta + 1}} + a_1 < 0\) then
      \(\pi_{\sigma_{\beta + i}} + a_i < 0\) for all \(i \in [\abs{\mathcal{Z}} - \beta]\),
      because \(\pi_{\sigma_{\beta + i}}\) and \(a_i\) are decreasing in $i$. Since
      \(\pi \leq 0\), an optimal \(T^*\) to the inner maximization problem of \eqref{eqn:vi-main-f-sum} is
      \(0\). It follows that increasing all \(\pi_{\sigma_i}\), \(1 \leq i \leq \beta + 1\) to
      \(-a_1\) does not increase the objective value of \eqref{eqn:vi-main-f-sum}. Hence, we have
      \(\pi_{\sigma_{\beta + 1}} + a_1 \geq 0\) without loss of optimality.}
\end{description}
\end{proof}

\begin{proposition}%
\label{prop:vi-opt-struct}
For all \(t \in \{\beta+1, \ldots, |\mathcal{Z}|\}\), there exists an optimal
solution \(\pi^*\) to the inner minimization problem
\begin{align}
\min_{\pi \in \Pi_t} \Set{
\left( \sum_{i = 1}^{\beta} \pi_{\sigma_i} +
  \sum_{i = \beta + 1}^t (a_{i - \beta} + \pi_{\sigma_i}) \right)^+ -
  \sum_{i = 1}^{\abs{\mathcal{Z}}} \pi_{\sigma_i} \hat{x}_{\sigma_i}
} \label{eqn:vi-main-6}
\end{align}
that has the following structure:
\begin{enumerate}
\item If \(b_{t - \beta} - \beta a_{t - \beta} \geq 0\), then
$\pi^*_{\sigma_1} = \pi^*_{\sigma_2} = \cdots = \pi^*_{\sigma_{\abs{\mathcal{Z}}}} = -a_{t - \beta}$.
\item If \(b_{t - \beta} - \beta a_{t - \beta} < 0\), then \(\pi^*\) takes one of the following two forms:
\begin{enumerate}
\item[(A)] Let \(\tilde{t} := \beta - \frac{b_{t - \beta}}{a_{t - \beta}}\). Then,
\begin{align*}
0 = \pi^*_{\sigma_1} = \cdots = \pi^*_{\sigma_{\floor{\tilde{t}}}} \geq
\pi^*_{\sigma_{\ceil{\tilde{t}}}} = -a_{t - \beta} (\ceil{\tilde{t}} - \tilde{t}) >
\pi^*_{\sigma_{\ceil{\tilde{t}}} + 1} = \cdots = \pi^*_{\sigma_{\abs{\mathcal{Z}}}} = -a_{t - \beta}.
\end{align*}
\item[(B)] If there exists an integer \(t^{\prime} \in \left( \beta - \frac{b_{t - \beta + 1}}{a_{t - \beta + 1}},
      \beta - \frac{b_{t - \beta}}{a_{t - \beta}} \right) \cap \mathbb{Z}_+\){\color{black},} then\\[0.2cm]
$\displaystyle 0 = \pi^*_{\sigma_1} = \cdots = \pi^*_{\sigma_{t^{\prime}}}
> \pi^*_{\sigma_{t^{\prime} + 1}} = \cdots = \pi^*_{\sigma_{\abs{\mathcal{Z}}}} = - \frac{f(t - \beta)}{(t - t^{\prime})}$.
\end{enumerate}
\end{enumerate}
\end{proposition}

\begin{proof}
Fix \(t \in \{\beta+1, \ldots, |\mathcal{Z}|\}\) and we discuss the following two cases.

First, if \(b_{t - \beta} - \beta a_{t - \beta} \geq 0\) then
\begin{align*}
\sum_{i = 1}^{\beta} \pi_{\sigma_i} + \sum_{i = \beta + 1}^t (a_{i - \beta} + \pi_{\sigma_i})
& \geq t\pi_{\sigma_t} + \sum_{i = \beta + 1}^t a_{i - \beta} \\
& \geq - t a_{t - \beta} + a_{t - \beta} (t - \beta) + b_{t - \beta} \\
& = b_{t - \beta} - \beta a_{t - \beta} \geq 0,
\end{align*}
where the first inequality is because \(\pi_{\sigma_i} \geq \pi_{\sigma_t}\) for
all \(i \in [t]\) and the second inequality is because
\(\pi_{\sigma_t} \geq - a_{t-\beta}\) by the definition of \(t\) and
\(\sum_{i=\beta+1}^t a_{i - \beta} = \sum_{i=\beta+1}^t (f(i-\beta) - f(i-\beta-1))
= f(t - \beta) = a_{t - \beta} (t - \beta) + b_{t - \beta}\).
Then, we recast \eqref{eqn:vi-main-6} as
\begin{align*}
\mathrm{\eqref{eqn:vi-main-6}} = \min_{\pi \in \Pi_t} \; & \Set{
\sum_{i = \beta + 1}^t a_{i - \beta} +
\sum_{i = 1}^t \pi_{\sigma_i} (1 - \hat{x}_{\sigma_i}) -
\sum_{i = t + 1}^{\abs{\mathcal{Z}}} \pi_{\sigma_i} \hat{x}_{\sigma_i}
}.
\end{align*}
Since \(\hat{x} \in [0, 1]^n\), decreasing \(\{\pi_{\sigma_i}: i \in [t]\}\) and
increasing \(\{\pi_{\sigma_i}: t+1 \leq i \leq |\mathcal{Z}|\}\) improve the
objective value. It follows that an optimal solution \(\pi^*\) takes the
following form:
\[
   \pi^*_{\sigma_1} = \cdots = \pi^*_{\sigma_t} = - a_{t - \beta} =
   \pi^*_{\sigma_{t + 1}} = \cdots =
   \pi^*_{\sigma_{\abs{\mathcal{Z}}}}.
\]

Second, suppose that \(b_{t - \beta} - \beta a_{t - \beta} < 0\). For \(\pi \in \Pi_t\) we define
\begin{align*}
S(\pi) := \sum_{i = 1}^{\beta} \pi_{\sigma_i} +
          \sum_{i = \beta + 1}^t (a_{i - \beta} + \pi_{\sigma_i}).
\end{align*}
We show \(S(\pi) = 0\) at optimality of \eqref{eqn:vi-main-6}. To see that, we
discuss the following two cases.
\begin{enumerate}
\item If \(S(\pi) > 0\), then there exists an \(i \in \integers{}, \beta + 1
      \leq i \leq \abs{\mathcal{Z}}\) such that \(a_{i - \beta} + \pi_{\sigma_i}
      > 0\). Let \(\underline{t}\) be its last occurrence:
    \begin{align*}
      \underline{t} := \max \Set{
      \beta + 1 \leq i \leq \abs{\mathcal{Z}} : a_{i - \beta} + \pi_{\sigma_i} > 0
      }.
    \end{align*}
    Note that \(\underline{t} \leq t\) by definition and
    \(a_{i - \beta} + \pi_{\sigma_i} = 0\) for all \(\underline{t} < i \leq t\).
    Then,
    \begin{align*}
      \max \Set{S(\pi), 0} - \sum_{i = 1}^{\abs{\mathcal{Z}}} \pi_{\sigma_i} \hat{x}_{\sigma_i}
      & = \sum_{i = 1}^{\beta} \pi_{\sigma_i} + \sum_{i = \beta + 1}^{\underline{t}} (a_{i - \beta} + \pi_{\sigma_i}) -
        \sum_{i = 1}^{\abs{\mathcal{Z}}} \pi_{\sigma_i} \hat{x}_{\sigma_i} \\
      & = \sum_{i = \beta + 1}^{\underline{t}} a_{i - \beta} +
        \sum_{i = 1}^{\underline{t} - 1} \pi_{\sigma_i} (1 - \hat{x}_{\sigma_i}) +
        \pi_{\sigma_{\underline{t}}}(1 - \hat{x}_{\sigma_{\underline{t}}}) -
        \sum_{i = \underline{t} + 1}^{\abs{\mathcal{Z}}} \pi_{\sigma_i} \hat{x}_{\sigma_i}.
    \end{align*}
    Therefore, as long as \(S(\pi) > 0\), decreasing
    \(\pi_{\sigma_{\underline{t}}}\) lowers the objective value
    of~\eqref{eqn:vi-main-6}.

\item If \(S(\pi) < 0\), then there exists an \(i \in [\beta]\) such that
    \(\pi_{\sigma_i} < 0\). Let \(\bar{t}\) be its first occurrence:
    \begin{align*}
      \bar{t} := \min \Set{i \in [\beta]: \pi_{\sigma_i} < 0}.
    \end{align*}
    Then, by noting that
    \(\pi_{\sigma_i} = 0\) for all \(i \in [\bar{t} - 1]\), we have
    \begin{align*}
      \max \Set{S(\pi), 0} - \sum_{i = 1}^{\abs{\mathcal{Z}}} \pi_{\sigma_i} \hat{x}_{\sigma_i}
      & = - \pi_{\sigma_{\bar{t}}} \hat{x}_{\sigma_{\bar{t}}} -
        \sum_{i = \bar{t} + 1}^{\abs{\mathcal{Z}}} \pi_{\sigma_i} \hat{x}_{\sigma_i}.
    \end{align*}
    Therefore, as long as \(S(\pi) < 0\), increasing \(\pi_{\sigma_{\bar{t}}}\)
    lowers the objective value of \eqref{eqn:vi-main-6}.
\end{enumerate}

In view of the optimality condition \(S(\pi) = 0\), we reformulate
\eqref{eqn:vi-main-6} as the following linear program:
\begin{align}
\eqref{eqn:vi-main-6} =
\min_{\pi} ~
& - \sum_{i = 1}^{\abs{\mathcal{Z}}} \pi_{\sigma_i} \hat{x}_{\sigma_i}, \nonumber \\
\st{} ~
& 0 \geq \pi_{\sigma_1}, \nonumber \\
& \pi_{\sigma_i} \geq \pi_{\sigma_{i + 1}}, ~ \forall i \in [\abs{\mathcal{Z}}-1], \nonumber \\
& \pi_{\sigma_t} + a_{t - \beta} \geq 0, \nonumber \\
& \pi_{\sigma_{t + 1}} + a_{t - \beta + 1} \leq 0, \label{eqn:vi-next-constr} \\
& \sum_{i = 1}^t \pi_{\sigma_i} + \sum_{i = \beta + 1}^t a_{i - \beta} = 0, \label{lp-note-1}
\end{align}
where constraint~\eqref{lp-note-1} states \(S(\pi) = 0\) and the remaining
constraints come from the definition of \(\Pi_t\). \edits{Note that constraint~\eqref{eqn:vi-next-constr} does not exist in the case $t = |\mathcal{Z}|$. Although the remaining proof focuses on $t < |\mathcal{Z}|$, it can be directly adapted to the case $t = |\mathcal{Z}|$.} In what
follows, we analyze the basic feasible solutions of the above linear program to
identify the structure of an optimal solution \(\pi^*\). There are
\(\abs{\mathcal{Z}}\) decision variables, 1 equality constraint, and
\(\abs{\mathcal{Z}} + 2\) inequality constraints. By definition of \(t\), the
constraint \(\pi_{\sigma_{t + 1}} + a_{t - \beta + 1} \leq 0\) must hold
strictly. In addition, variables
\(\pi_{\sigma_{t+2}}, \ldots, \pi_{\sigma_{|\mathcal{Z}|}}\) are only
constrained by their ordering and that
\(\pi_{\sigma_{t+2}} \leq \pi_{\sigma_{t+1}}\), and their coefficients in the
objective function are all negative. As a consequence,
\(\pi_{\sigma_{t+1}} = \pi_{\sigma_{t+2}} = \cdots = \pi_{\sigma_{\abs{\mathcal{Z}}}}\)
at optimality. Hence, we only need to consider those basic feasible solutions,
where either (A) \(\pi_{\sigma_t} + a_{t - \beta} = 0\) and two among the
constraints \(0 \geq \pi_{\sigma_1} \geq \cdots \geq \pi_{\sigma_{t+1}}\) are
satisfied strictly (or weakly in case of degeneracy), or (B)
\(\pi_{\sigma_t} + a_{t - \beta} > 0\) and one of the constraints
\(0 \geq \pi_{\sigma_1} \geq \cdots \geq \pi_{\sigma_{t+1}}\) is satisfied
strictly (or weakly in case of degeneracy). We discuss cases (A) and (B) to
finish the proof.

\begin{enumerate}
\item[(A)] Since \(\pi_{\sigma_t} + a_{t - \beta} = 0\), there exist
    \(0 \leq t_1 < t_2 \leq t\) such that
    \begin{align*}
      0 = \pi_{\sigma_1} = \cdots = \pi_{\sigma_{t_1}} \geq \pi_{\sigma_{t_1 + 1}} = \cdots
      = \pi_{\sigma_{t_2}} \geq \pi_{\sigma_{t_2 + 1}} = \cdots = \pi_{\sigma_{\abs{\mathcal{Z}}}} = -a_{t - \beta}.
    \end{align*}
    By constraint~\eqref{lp-note-1}, we have:
    \begin{align*}
      (t_2 - t_1) \pi_{\sigma_{t_2}}
      & = - \sum_{i=t_2 + 1}^t (-a_{t-\beta}) - \sum_{i = \beta + 1}^t a_{i - \beta}
        = - t_2 a_{t - \beta} + (\beta a_{t-\beta} - b_{t-\beta}),
    \end{align*}
    where the last equality is because
    \(\sum_{i=\beta+1}^t a_{i - \beta} = \sum_{i=\beta+1}^t (f(i-\beta) - f(i-\beta-1)) = f(t - \beta) = a_{t - \beta} (t - \beta) + b_{t - \beta}\).
    Let \(\tilde{t} := \beta - \frac{b_{t - \beta}}{a_{t - \beta}}\).
    Then, a valid choice of \(\pi_{\sigma_{t_2}}\) should satisfy
    \(-a_{t-\beta} \leq \pi_{\sigma_{t_2}} \leq 0\), which, together with the
    above equality, implies
    \begin{align*}
      & - (t_2 - t_1) a_{t-\beta} \leq (\tilde{t} - t_2) a_{t - \beta} \leq 0 \quad \Leftrightarrow \quad t_1 \leq \tilde{t} \leq t_2.
    \end{align*}
    We assume that \(\tilde{t}\) is fractional and will return to the case of
    integer-valued \(\tilde{t}\) in the end. We show \(t_1 = \floor{\tilde{t}}\)
    and \(t_2 = \ceil{\tilde{t}}\) at optimality. Indeed, if
    \(t_1 \leq \lfloor \tilde{t} \rfloor - 1\) then increasing \(t_1\) by 1
    improves the objective value. Specifically, let \(\pi'\) be the \(\pi\) with
    \(t_1\) replaced by \(t_1 + 1\). Then,
    \(\pi'_{\sigma_{t_2}} \leq \pi_{\sigma_{t_2}}\) because
    \(\sum_{i=1}^t \pi_{\sigma_i}\) equals a constant due to constraint
    \eqref{lp-note-1}. In addition,
    \begin{align}
      (t_2 - t_1)\pi_{\sigma_{t_2}}
      & = \sum_{i=1}^t \pi_{\sigma_i} - \sum_{i = t_2 + 1}^t \pi_{\sigma_i} \tag{by construction of \(\pi\)} \\
      & = -\sum_{i=\beta+1}^t a_{i-\beta} - \sum_{i = t_2 + 1}^t \pi_{\sigma_i} \tag{by constraint \eqref{lp-note-1}} \\
      & = \sum_{i=1}^t \pi'_{\sigma_i} - \sum_{i = t_2 + 1}^t \pi'_{\sigma_i} \tag{by construction of \(\pi'\)} \\
      & = (t_2 - t_1 - 1)\pi'_{\sigma_{t_2}}. \nonumber{}
    \end{align}
    Hence, the change in objective value equals
    \begin{align*}
      - \pi_{\sigma_{t_2}} \sum_{i = t_1 + 1}^{t_2} \hat{x}_{\sigma_i} +
      \pi'_{\sigma_{t_2}} \sum_{i = t_1 + 2}^{t_2} \hat{x}_{\sigma_i}
      & =
        - \pi_{\sigma_{t_2}} \hat{x}_{\sigma_{t_1 + 1}} - (\pi_{\sigma_{t_2}} - \pi'_{\sigma_{t_2}}) \sum_{i=t_1 + 2}^{t_2} \hat{x}_{\sigma_i} \\
      & \geq - \pi_{\sigma_{t_2}} \hat{x}_{\sigma_{t_1 + 1}} -
        (\pi_{\sigma_{t_2}} - \pi'_{\sigma_{t_2}}) \hat{x}_{\sigma_{t_1 + 1}} (t_2 - t_1 - 1) \\
      & = - \hat{x}_{\sigma_{t_1 + 1}} \left[ \pi_{\sigma_{t_2}} +
        (\pi_{\sigma_{t_2}} - \pi'_{\sigma_{t_2}}) (t_2 - t_1 -1) \right] \\
      & = 0,
    \end{align*}
    where the inequality is because \(\hat{x}_{\sigma_i}\) decreases in \(i\).
    Therefore, \(t_1 = \floor{\tilde{t}}\) at optimality. Likewise, if
    \(t_2 \geq \floor{\tilde{t}} + 1\) then decreasing \(t_2\) by \(1\) improves
    the objective value. Specifically, we let \(\pi''\) be the \(\pi\) with
    \(t_2\) replaced by \(t_2 - 1\). Then,
    \(\pi''_{\sigma_{t_2}} \geq \pi_{\sigma_{t_2}}\) because
    \(\sum_{i=1}^t \pi_{\sigma_i}\) equals a constant due to
    constraint~\eqref{lp-note-1}. In addition,
    \begin{align}
      (t_2 - t_1)\pi_{\sigma_{t_2}}
      & = \sum_{i=1}^t \pi_{\sigma_i} - \sum_{i = t_2 + 1}^t \pi_{\sigma_i} \tag{by construction of \(\pi\)} \\
      & = -\sum_{i=\beta+1}^t a_{i-\beta} - \sum_{i = t_2 + 1}^t \pi_{\sigma_i} \tag{by constraint~\eqref{lp-note-1}} \\
      & = \sum_{i=1}^t \pi''_{\sigma_i} - \sum_{i = t_2 + 1}^t \pi''_{\sigma_i} \tag{by construction of \(\pi''\)} \\
      & = (t_2 - t_1 - 1)\pi''_{\sigma_{t_2}} + (-a_{t-\beta}). \nonumber
    \end{align}
    Hence, the change in objective value equals
    \begin{align*}
      & \; - \pi_{\sigma_{t_2}} \sum_{i = t_1 + 1}^{t_2} \hat{x}_{\sigma_i} +
        a_{t - \beta} \sum_{i = t_2 + 1}^{\abs{\mathcal{Z}}} \hat{x}_{\sigma_i}
        + \pi''_{\sigma_{t_2}} \sum_{i = t_1 + 1}^{t_2 - 1} \hat{x}_{\sigma_i} -
        a_{t - \beta} \sum_{i = t_2}^{\abs{\mathcal{Z}}} \hat{x}_{\sigma_i} \\
      = & \; - (\pi_{\sigma_{t_2}} - \pi''_{\sigma_{t_2}}) \sum_{i = t_1 + 1}^{t_2 - 1} \hat{x}_{\sigma_i} -
          (\pi_{\sigma_{t_2}} + a_{t - \beta}) \hat{x}_{\sigma_{t_2}} \\
      \geq & \;  - \left[ (\pi_{\sigma_{t_2}} - \pi''_{\sigma_{t_2}}) (t_2 - t_1 - 1) +
             \pi_{\sigma_{t_2}} + a_{t - \beta} \right] \hat{x}_{\sigma_{t_2}} = 0.
    \end{align*}
    Therefore, \(t_2 = \ceil{\tilde{t}}\) at optimality. It follows from
    constraint~\eqref{lp-note-1} that at optimality
    \begin{align*}
      \pi^*_{\sigma_{t_2}} = -a_{t - \beta}(\ceil{\tilde{t}} - \tilde{t}).
    \end{align*}
    Finally, suppose that \(\tilde{t}\) is an integer. Then, following a similar
    analysis as above, we obtain that either (i) \(t_1 = \tilde{t} - 1\),
    \(t_2 = \tilde{t}\), \(\pi^*_{\sigma_{t_2}} = 0\) or (ii)
    \(t_1 = \tilde{t}\), \(t_2 = \tilde{t} + 1\),
    \(\pi^*_{\sigma_{t_2}} = -a_{t - \beta}\). Both cases coincide with the
    claim of this proposition.

\item[(B)] Since \(\pi_{\sigma_t} + a_{t - \beta} > 0\), there exists a
    \(0 \leq t^{\prime} \leq t\) such that
    \begin{align*}
      0 = \pi_{\sigma_1} = \cdots = \pi_{\sigma_{t^{\prime}}} \geq \pi_{\sigma_{t^{\prime} + 1}} = \cdots = \pi_{\sigma_{\abs{\mathcal{Z}}}}.
    \end{align*}
    If \(t^{\prime} = t\) then \(\sum_{i=1}^t \pi_{\sigma_i} = 0\), which violates
    constraint~\eqref{lp-note-1}. Hence, \(t^{\prime} \leq t - 1\) and a valid choice
    of \(\pi_{\sigma_t}\) satisfies
    \begin{align*}
      - a_{t - \beta + 1} >
      \pi_{\sigma_{t}} =
      - \frac{1}{(t - t^{\prime})}\sum_{i = \beta + 1}^t a_{i - \beta}
      > - a_{t - \beta}.
    \end{align*}
    Since
    \(\sum_{i=\beta+1}^t a_{i-\beta} = f(t - \beta) = (t-\beta)a_{t-\beta} + b_{t-\beta}\)
    and
    \(\sum_{i=\beta+1}^t a_{i-\beta} = f(t - \beta) = (t-\beta)a_{t-\beta+1} + b_{t-\beta+1}\),
    these inequalities are equivalent to
    \begin{align*}
      \beta - \frac{b_{t - \beta + 1}}{a_{t - \beta + 1}} < t^{\prime} < \beta - \frac{b_{t-\beta}}{a_{t-\beta}}.
    \end{align*}
    Accordingly, at optimality \(\pi^*_{\sigma_{t^{\prime} + 1}}\) equals
    \(\displaystyle - \frac{\sum_{i=\beta+1}^t a_{i-\beta}}{(t - t^{\prime})} \equiv - \frac{f(t - \beta)}{(t - t^{\prime})}\).
\end{enumerate}
\end{proof}

\begin{proposition}
\label{prop:vi-min-max}
For all \(\pi \in \Pi^*\), it holds that
\begin{align}
  \max_{T \in \{0\}\cup [\abs{\mathcal{Z}}]} \Set{
  f \left( \posp{ T - \beta } \right) + \sum_{i=1}^{T} \pi_{\sigma_i}}
  = \min_{(\ell, \rho) \in L_1 \cup L_2} \max_{T \in \{0\}\cup [\abs{\mathcal{Z}}]} \Set{ \hfcn_{(\ell, \rho)}(T) +
  \sum_{i=1}^{T} \pi_{\sigma_i}}. \label{eqn:vi-minmax}
\end{align}
\end{proposition}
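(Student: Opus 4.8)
The plan is to prove the two inequalities in~\eqref{eqn:vi-minmax} separately. The direction ``$\leq$'' is immediate from Lemma~\ref{lem:vis-frepr}: since $f(\posp{T-\beta}) = \min_{(\ell,\rho)\in L_1\cup L_2} \hfcn_{(\ell,\rho)}(T)$ for every $T$, for each fixed $(\ell,\rho)$ we have $\max_{T}\{f(\posp{T-\beta}) + \sum_{i=1}^T\pi_{\sigma_i}\} \leq \max_T\{\hfcn_{(\ell,\rho)}(T) + \sum_{i=1}^T\pi_{\sigma_i}\}$, and minimizing the right-hand side over $(\ell,\rho)$ gives the claim. The work lies in the reverse inequality, for which the strategy is to exhibit, for each $\pi\in\Pi^*$, a single index pair $(\ell^*,\rho^*)\in L_1\cup L_2$ such that the map $T\mapsto \hfcn_{(\ell^*,\rho^*)}(T) + \sum_{i=1}^T\pi_{\sigma_i}$ is \emph{constant} on $\{0\}\cup[\abs{\mathcal Z}]$ and its constant value equals the left-hand side of~\eqref{eqn:vi-minmax}. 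Given such a pair, $\max_T\{\hfcn_{(\ell^*,\rho^*)}(T) + \sum_{i=1}^T\pi_{\sigma_i}\}$ equals the left-hand side, and since the right-hand side is a minimum over $(\ell,\rho)$ it is at most this value, which together with ``$\leq$'' yields the equality.

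To choose $(\ell^*,\rho^*)$, fix the $t\in\{\beta+1,\dots,\abs{\mathcal Z}\}$ with $\pi\in\Pi^*_t$ and split according to the three structural forms in Proposition~\ref{prop:vi-opt-struct}. In case~1 ($b_{t-\beta}-\beta a_{t-\beta}\geq 0$, so $\pi_{\sigma_i}\equiv -a_{t-\beta}$) and in case~2(A), I would take $(\ell^*,\rho^*)=(t-\beta,\, b_{t-\beta}/a_{t-\beta})\in L_1$; its breakpoint sits at $z=\beta-b_{t-\beta}/a_{t-\beta}$, which is $\leq 0$ in case~1 and equals $\tilde t$ in case~2(A), and is aligned with where the partial sums $\sum_{i=1}^T\pi_{\sigma_i}$ change slope, so that past the breakpoint the slope $a_{t-\beta}$ of $\hfcn_{(\ell^*,\rho^*)}$ is exactly cancelled by the increment $-a_{t-\beta}$ of the partial sum, while both are constant before it. In case~2(B), I would take $(\ell^*,\rho^*)=(t-\beta,\, \beta-t')\in L_2$, whose membership in $L_2$ holds precisely because the defining condition $t'\in(\beta-b_{t-\beta+1}/a_{t-\beta+1},\,\beta-b_{t-\beta}/a_{t-\beta})\cap\mathbb Z_+$ translates into $\beta-t'\in[\beta]\cap(b_{t-\beta}/a_{t-\beta},\,b_{t-\beta+1}/a_{t-\beta+1})$; here $\hfcn_{(\ell^*,\rho^*)}(z)=\tfrac{f(t-\beta)}{t-t'}\posp{z-t'}$ has its breakpoint at the integer $t'$, matched to where $\pi_{\sigma_i}$ switches from $0$ to $-\tfrac{f(t-\beta)}{t-t'}$, so the maximand is again constant. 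The arithmetic behind ``aligned'' and ``cancelled'' in each case reduces to the identity $\sum_{i=\beta+1}^t a_{i-\beta}=f(t-\beta)=(t-\beta)a_{t-\beta}+b_{t-\beta}$ together with the inequalities defining $L_1$ and $L_2$; the integer-$\tilde t$ degeneracy in case~2(A) is handled separately but in the same way.

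Finally, to identify the constant value with the left-hand side, I would evaluate the maximand at $T=t$ (legitimate since $t\leq\abs{\mathcal Z}$): for $(\ell,\rho)\in L_1$ with $\ell=t-\beta$ we have $\hfcn_{(\ell,\rho)}(z)=f(\posp{z-\beta})$ at $z=t$, and for $(\ell,\rho)\in L_2$ with $\ell=t-\beta$ we have $\hfcn_{(\ell,\rho)}(t)=f(t-\beta)$; in either case $\hfcn_{(\ell^*,\rho^*)}(t)=f(\posp{t-\beta})$. Hence the constant equals $f(\posp{t-\beta})+\sum_{i=1}^t\pi_{\sigma_i}$, which is at most the left-hand side of~\eqref{eqn:vi-minmax}; and since $\hfcn_{(\ell^*,\rho^*)}\geq f(\posp{\cdot-\beta})$ pointwise by Lemma~\ref{lem:vis-frepr}, the same constant is also at least the left-hand side, hence equal to it. The main obstacle I anticipate is not conceptual but the careful case-by-case bookkeeping needed to certify that the breakpoint of the chosen $\hfcn_{(\ell^*,\rho^*)}$ lands \emph{exactly} where $\sum_{i=1}^T\pi_{\sigma_i}$ changes slope, so that the cancellation is exact and the maximand is genuinely constant (not merely unimodal) --- which is precisely what makes the minimax interchange valid.
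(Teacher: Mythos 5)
Your proposal is correct and follows essentially the same route as the paper's proof: the trivial max--min inequality for one direction, and for the other direction the same case split over the structure of \(\pi \in \Pi^*_t\) with the same witness pairs, \((t-\beta,\, b_{t-\beta}/a_{t-\beta}) \in L_1\) in case 1 and 2(A) and \((t-\beta,\, \beta-t') \in L_2\) in case 2(B), exploiting the exact slope cancellation that makes \(T \mapsto \hfcn_{(\ell^*,\rho^*)}(T)+\sum_{i=1}^T\pi_{\sigma_i}\) constant. The only (minor) difference is how you identify the common value with the left-hand side --- evaluating at \(T=t\) and using the pointwise domination \(\hfcn_{(\ell,\rho)} \geq f(\posp{\cdot-\beta})\) from Lemma~\ref{lem:vis-frepr}, rather than invoking \(\pi\in\Pi_t\) and the closed form \(\posp{S(\pi)}\) for the left-hand maximum as the paper does --- which is a harmless, equally valid bookkeeping choice.
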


\begin{proof}
Since
\(f ( (T - \beta)^+) = \min_{(\ell, \rho) \in L_1 \cup L_2}\hfcn_{(\ell, \rho)}(T)\)
by Lemma \ref{lem:vis-frepr}, we switch the order of maximization and
minimization to obtain
\begin{align*}
  \max_{T \in \{0\}\cup [\abs{\mathcal{Z}}]} \Set{
  f \left( \posp{ T - \beta } \right) + \sum_{i=1}^{T} \pi_{\sigma_i}}
  = & \max_{T \in \{0\}\cup [\abs{\mathcal{Z}}]} \Set{
  \min_{(\ell, \rho) \in L_1 \cup L_2}\hfcn_{(\ell, \rho)}(T) + \sum_{i=1}^{T} \pi_{\sigma_i}} \\
  \leq & \min_{(\ell, \rho) \in L_1 \cup L_2} \Set{\max_{T \in \{0\}\cup [\abs{\mathcal{Z}}]} \hfcn_{(\ell, \rho)}(T) +
  \sum_{i=1}^{T} \pi_{\sigma_i}}.
\end{align*}

It remains to prove the other direction of \eqref{eqn:vi-minmax}.
\(\pi \in \Pi^*\) implies that \(\pi \in \Pi^*_t\) for a
\(t \in \integers{}, \beta + 1 \leq t \leq \abs{\mathcal{Z}}\). We discuss the
following cases to finish the proof.
\begin{enumerate}
\item If \(b_{t - \beta} - \beta a_{t - \beta} \geq 0\), then by definition of
    \(\Pi^*_t\)
    \begin{align*}
      \pi_{\sigma_1} = \pi_{\sigma_2} = \cdots = \pi_{\sigma_{\abs{\mathcal{Z}}}} = -a_{t - \beta}.
    \end{align*}
    It follows that
    \begin{align*}
      \min_{(\ell, \rho) \in L_1 \cup L_2} \max_{T \in \{0\}\cup [\abs{\mathcal{Z}}]} \Set{\hfcn_{(\ell, \rho)}(T) +
      \sum_{i=1}^{T} \pi_{\sigma_i}}
      & \leq \max_{T \in \{0\}\cup [\abs{\mathcal{Z}}]} \Set{ \posp{\fcnl_{t - \beta}(T - \beta)} + \sum_{i = 1}^T \pi_{\sigma_i} } \\
      & = \max_{T \in \{0\}\cup [\abs{\mathcal{Z}}]} \Set{ a_{t - \beta} (T - \beta) + b_{t - \beta} - T a_{t - \beta} } \\
      & = \sum_{i = 1}^{\beta} (- a_{t - \beta}) + \sum_{i = \beta + 1}^t (a_{i - \beta} - a_{t - \beta}) \\
      & = \sum_{i = 1}^{\beta} \pi_{\sigma_i} + \sum_{i = \beta + 1}^t (a_{i - \beta} + \pi_{\sigma_i}) \\
      & = \max_{T \in \{0\}\cup [\abs{\mathcal{Z}}]} \Set{
        f \left( \posp{ T - \beta } \right) + \sum_{i=1}^{T} \pi_{\sigma_i}},
    \end{align*}
    where the inequality is because we select \((\ell, \rho) \in L_1\) with
    \(\ell = t - \beta\) and \(\rho = b_{t-\beta}/a_{t-\beta}\), the first
    equality is because \(b_{t-\beta} - \beta a_{t-\beta} \geq 0\), the second
    equality is because
    \(b_{t-\beta} - \beta a_{t - \beta} = b_{t-\beta} + (t-\beta) a_{t - \beta} - ta_{t - \beta} = \sum_{i=\beta+1}^t a_{i-\beta} - ta_{t - \beta}\),
    and the last equality is because \(\pi \in \Pi^*_t \subseteq \Pi_t\).
\item If \(b_{t - \beta} - \beta a_{t - \beta} < 0\) then we discuss the
    following two cases.
\begin{enumerate}
\item[(A)] If
    \(0 = \pi_{\sigma_1} = \cdots = \pi_{\sigma_{\floor{\tilde{t}}}} \geq \pi_{\sigma_{\ceil{\tilde{t}}}} = -a_{t - \beta} (\ceil{\tilde{t}} - \tilde{t}) > \pi_{\sigma_{\ceil{\tilde{t}}+1}} = \cdots = \pi_{\sigma_{\abs{\mathcal{Z}}}} = -a_{t - \beta}\)
    for \(\tilde{t} \equiv \beta - \frac{b_{t - \beta}}{a_{t - \beta}}\), then
    \begin{align*}
      \min_{(\ell, \rho) \in L_1 \cup L_2} \max_{T \in \{0\}\cup [\abs{\mathcal{Z}}]} \Set{ \hfcn_{(\ell, \rho)}(T) +
      \sum_{i=1}^{T} \pi_{\sigma_i} }
      & \leq \max_{T \in \{0\}\cup [\abs{\mathcal{Z}}]}
        \Set{ \posp{a_{t - \beta} (T - \beta) + b_{t - \beta}} + \sum_{i = 1}^T \pi_{\sigma_i} } \\
      & = 0 \\
      & = \max \Set{\sum_{i = 1}^{\beta} \pi_{\sigma_i} +
        \sum_{i = \beta + 1}^t (a_{i - \beta} + \pi_{\sigma_i}), 0} \\
      & = \max_{T \in \{0\}\cup [\abs{\mathcal{Z}}]} \Set{
        f \left( \posp{ T - \beta } \right) + \sum_{i=1}^{T} \pi_{\sigma_i}},
    \end{align*}
    where the inequality is because we select \((\ell, \rho) \in L_1\) with
    \(\ell = t - \beta\) and \(\rho = b_{t-\beta}/a_{t-\beta}\), the first
    equality is because
    \begin{align*}
      \posp{a_{t - \beta} (T - \beta) + b_{t - \beta}}
      & = \posp{Ta_{t-\beta} + b_{t-\beta} - \beta a_{t-\beta}} \\
      & = \posp{T - \tilde{t}}a_{t-\beta} \ = \ - \sum_{i=1}^T \pi_{\sigma_i}
    \end{align*}
    for all \(T \in \{0\} \cup [|\mathcal{Z}|]\), the second equality is because
    \(\sum_{i = \beta + 1}^t a_{i-\beta} = (t - \tilde{t})a_{t-\beta}\), and the
    last equality is because \(\pi \in \Pi^*_t \subseteq \Pi_t\).
\item[(B)] If
    \(\displaystyle 0 = \pi_{\sigma_1} = \cdots = \pi_{\sigma_{t^{\prime}}} > \pi_{\sigma_{t^{\prime} + 1}} = \cdots \pi_{\sigma_{\abs{\mathcal{Z}}}} = -\frac{f(t - \beta)}{(t - t^{\prime})}\)
    for a
    \(t^{\prime} \in \Bigl( \beta - \frac{b_{t - \beta + 1}}{a_{t - \beta + 1}}, \beta - \frac{b_{t - \beta}}{a_{t - \beta}} \Bigr) \cap \mathbb{Z}_+\),
    then
    \begin{align*}
      \min_{(\ell, \rho) \in L_1 \cup L_2} \max_{T \in \{0\}\cup [\abs{\mathcal{Z}}]} \Set{\hfcn_{(\ell, \rho)}(T) +
      \sum_{i=1}^{T} \pi_{\sigma_i}}
      & \leq \max_{T \in \{0\}\cup [\abs{\mathcal{Z}}]} \Set{ \hfcn_{(t - \beta, \beta - t^{\prime})}(T) + \sum_{i = 1}^T \pi_{\sigma_i} } \\
      & = \max_{T \in \{0\}\cup [\abs{\mathcal{Z}}]} \Set{
        \posp{\frac{f(t - \beta)}{t - t^{\prime}} (T - t^{\prime})} + \sum_{i = 1}^T \pi_{\sigma_i} } \\
      & = 0 \\
      & = \max \Set{\sum_{i = 1}^{\beta} \pi_{\sigma_i} +
        \sum_{i = \beta + 1}^t (a_{i - \beta} + \pi_{\sigma_i}), 0} \\
      & = \max_{T \in \{0\}\cup [\abs{\mathcal{Z}}]} \Set{
        f \left( \posp{ T - \beta } \right) + \sum_{i=1}^{T} \pi_{\sigma_i}},
    \end{align*}
    where the inequality is because we select \((\ell, \rho) \in L_2\) with
    \(\ell = t - \beta\) and \(\rho = \beta - t^{\prime}\), the second equality is due
    to the definition of \(\pi\), the third equality is because
    \(\sum_{i=\beta+1}^t a_{i-\beta} = f(t-\beta)\), and the last equality is
    because \(\pi \in \Pi^*_t \subseteq \Pi_t\).
\end{enumerate}
\end{enumerate}
\end{proof}


\subsection{Valid inequalities from cross scenarios}%
\label{sec:valid_ineqs_cross}
{\color{black}Using the fact that \(x^{\top}\xi\) is an integer,} we study valid inequalities arising from multiple scenarios \(j \in [N]\) and
multiple coverings \(i \in [I]\). To this end, we focus on a set \(\mathcal{S}\)
consisting of \(J\) ``base'' inequalities and other domain restrictions:
\begin{align}
  \setcross{} := \Set{ (x, \gamma, z) \in \binaries^n\times\reals^{1+J} \colon
  \begin{aligned}
  & - z_j - \gamma \geq d^j \left( x^{\top} \bar{\xi}^j \right) + d^j_0,
  \quad \forall j \in [J] \\
  & z_j \leq 0, \quad \forall j \in [J]
  \end{aligned}}, \label{eqn:setcross-base}
\end{align}
where \(d^j_0 \in \reals\) and \(d^j < 0\) represent constant parameters. The
base inequalities can come from two sources. First, the Benders feasibility cut
\eqref{eqn:reform-feasi-base} implies that
\begin{align*}
  - z_j - \gamma
  & \geq \phi^j(\varnothing) +
    \sum_{k = 1}^n \left[ \phi^j(\mathcal{T}_k) - \phi^j(\mathcal{T}_{k - 1}) \right] x_{\sigma_k} \\
  & = \phi^j(\varnothing) + \sum_{k:\phi^j(\mathcal{T}_{k - 1}) < 0} (\ones^{\top}\hat{\mu}_1)x_{\sigma_k} +
    \sum_{\substack{k:\phi^j(\mathcal{T}_{k - 1}) = 0,\\ \phi^j(\mathcal{T}_k) < 0}} \phi^j(\mathcal{T}_k) x_{\sigma_k} +
  \sum_{k:\phi^j(\mathcal{T}_k) = 0} 0 \cdot x_{\sigma_k} \\
  & \geq \phi^j(\varnothing) + (\ones^{\top}\hat{\mu}_1) x^{\top} \bar{\xi}^j,
\end{align*}
where \(\bar{\xi}^j\) denotes a binary vector, in which
\(\bar{\xi}^j_{\sigma_k}=1\) if and only if \(\phi^j(\mathcal{T}_k) < 0\), and
the last inequality is because
\(\phi^j(\mathcal{T}_k) \geq \ones^{\top}\hat{\mu}_1\) if
\(\phi^j(\mathcal{T}_{k - 1}) = 0\) and \(\phi^j(\mathcal{T}_k) < 0\). As a
result, \eqref{eqn:reform-feasi-base} generates a base inequality with
\(d^j_0 = \phi^j(\varnothing)\) and \(d^j = \ones^{\top}\hat{\mu}_1 < 0\).
Second, the single-scenario valid inequality \eqref{eqn:cut-single} implies
\begin{align*}
  - z_j - \gamma
  & \geq \Phi^j(\varnothing) +
    \sum_{k = 1}^n \left[ \Phi^j(\mathcal{T}_k) - \Phi^j(\mathcal{T}_{k - 1}) \right] x_{\sigma_k} \\
  & = \Phi^j(\varnothing) + \sum_{k:\Phi^j(\mathcal{T}_{k - 1}) < 0} \left(- \frac{\ell^{1/p}}{\ell+\rho}\right)x_{\sigma_k} +
    \sum_{\substack{k:\Phi^j(\mathcal{T}_{k - 1}) = 0,\\ \phi^j(\mathcal{T}_k) < 0}} \Phi^j(\mathcal{T}_k) x_{\sigma_k} +
  \sum_{k:\Phi^j(\mathcal{T}_k) = 0} 0 \cdot x_{\sigma_k} \\
  & \geq \Phi^j(\varnothing) + \left(- \frac{\ell^{1/p}}{\ell+\rho}\right) x^{\top} \bar{\xi}^j,
\end{align*}
where \(\bar{\xi}^j\) denotes a binary vector, in which
\(\bar{\xi}^j_{\sigma_k}=1\) if and only if \(\Phi^j(\mathcal{T}_k) < 0\). As a
result, \eqref{eqn:cut-single} generates a base inequality with
\(d^j_0 = \Phi^j(\varnothing)\) and \(d^j = -\ell^{1/p}/(\ell+\rho) < 0\). We
remark that the number of base inequalities, \(J\), need not to be the same as
the number of scenarios, \(N\). This is because a single scenario can generate
multiple base inequalities with regard to different coverings.

We employ the mixing scheme~\cite{guenluek-2001-mixin-mixed} to derive valid
inequalities for \(\setcross\). This scheme produces a new inequality by
combining existing ones. Specifically, consider the following mixing set \(\mathcal{MIX}\) consisting of \(J\) base inequalities:
\begin{align*}
\mathcal{MIX} := \Set{x \in \binaries^n\colon
\begin{aligned}
& f^j(x) + B g^j(x) \geq u^j, \quad \forall j \in [J] \\
& f^j(x) \geq 0, \quad \forall j \in [J]
\end{aligned}},
\end{align*}
where \(B \in \reals_{+}, u^j \in \reals\), and \(g^j(x) \in \integers{}\).
Then,~\cite{guenluek-2001-mixin-mixed} derived the following mixed inequality.
\begin{theorem}[Theorem~\(2\) in~\cite{guenluek-2001-mixin-mixed}]
\label{thm:mixing}
The following mixed inequality is valid for \(\mathcal{MIX}\):
\begin{gather*}
\max_{j \in [J]}\{f^j(x)\} \geq \sum_{j = 1}^J (\nu^{j} - \nu^{j-1}) (\tau^{j} - g^j(x)),
\end{gather*}
where \(\nu^{0} := 0\), \(\tau^j := \ceil{u^j / B}\), \(\nu^j := u^j - B(\tau^{j} - 1)\) for all \(j \in [J]\), and \(\nu^j\)'s are sorted so that \(\nu^j \geq \nu^{j - 1}\) for all \(j \in [J]\).\qed
\end{theorem}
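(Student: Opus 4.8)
The plan is to verify the inequality directly for an arbitrary fixed \(x \in \mathcal{MIX}\), writing \(\bar{f} := \max_{j \in [J]} f^{j}(x) \ge 0\) and bounding each term of the right-hand side \(\sum_{j=1}^{J}(\nu^{j}-\nu^{j-1})(\tau^{j}-g^{j}(x))\) from above so that the total telescopes to \(\bar{f}\). The first step eliminates \(f^{j}(x)\) from each base inequality: since \(f^{j}(x) \le \bar{f}\), the constraint \(f^{j}(x) + B g^{j}(x) \ge u^{j}\) gives \(g^{j}(x) \ge (u^{j}-\bar{f})/B\), and because \(g^{j}(x) \in \integers{}\) this rounds up to \(g^{j}(x) \ge \ceil{(u^{j}-\bar{f})/B}\). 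Hence, using \(\tau^{j} = \ceil{u^{j}/B}\),
\[
  \tau^{j} - g^{j}(x) \ \le\ \ceil{u^{j}/B} - \ceil{(u^{j}-\bar{f})/B}, \qquad \forall j \in [J].
\]

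Next I would evaluate this ceiling difference in closed form. Write \(\bar{f} = B\floor{\bar{f}/B} + \bar{f}_{0}\) with remainder \(\bar{f}_{0} := \bar{f} - B\floor{\bar{f}/B} \in [0,B)\), and note that \(\nu^{j} = u^{j}-B(\tau^{j}-1)\) equals \(B\) times the fractional part of \(u^{j}/B\) when \(u^{j}/B \notin \integers{}\) and equals \(B\) when \(u^{j}/B \in \integers{}\); in either case \(0 < \nu^{j} \le B\). A short case check on the fractional parts then yields
\[
  \ceil{u^{j}/B} - \ceil{(u^{j}-\bar{f})/B} \ =\ \floor{\bar{f}/B} + \Ind{\nu^{j} \le \bar{f}_{0}},
\]
so that \(\tau^{j}-g^{j}(x) \le \floor{\bar{f}/B} + \Ind{\nu^{j}\le\bar{f}_{0}}\) for every \(j\). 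I expect this identity to be the only delicate point of the argument: the computation is elementary, but the boundary behaviour must be pinned down carefully — in particular the integral case \(\nu^{j}=B\) (equivalently \(u^{j}/B \in \integers{}\)) has to fall into the branch where the indicator vanishes, and the comparison \(\nu^{j}\le\bar{f}_{0}\) must be taken non-strict.

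Finally I would assemble the bound. Since \(\nu^{0}=0\) and the \(\nu^{j}\)'s are sorted so that \(\nu^{j}\ge\nu^{j-1}\), every coefficient \(\nu^{j}-\nu^{j-1}\) is nonnegative, so the pointwise bound gives
\[
  \sum_{j=1}^{J}(\nu^{j}-\nu^{j-1})(\tau^{j}-g^{j}(x)) \ \le\ \floor{\bar{f}/B}\sum_{j=1}^{J}(\nu^{j}-\nu^{j-1}) \ +\ \sum_{j:\,\nu^{j}\le\bar{f}_{0}}(\nu^{j}-\nu^{j-1}).
\]
The first sum telescopes to \(\nu^{J}\le B\) (and \(\floor{\bar f/B}\ge 0\) as \(\bar f\ge 0\)), so the first term is at most \(B\floor{\bar{f}/B}\); for the second, monotonicity of the \(\nu^{j}\)'s makes \(\{j:\nu^{j}\le\bar{f}_{0}\}\) an initial segment \([k]\) (possibly empty), so the sum telescopes to \(\nu^{k}\le\bar{f}_{0}\) (or to \(0\le\bar{f}_{0}\) if the segment is empty). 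Adding the two pieces gives \(B\floor{\bar{f}/B}+\bar{f}_{0} = \bar{f} = \max_{j}f^{j}(x)\), which is exactly the claimed inequality. Equivalently, one may first divide every base inequality by \(B\) to reduce \(\mathcal{MIX}\) to the classical mixing set \(\{s+r_{j}\ge b_{j},\ s\ge 0,\ r\in\integers{}^{J}\}\) with \(s=\bar{f}/B\) and invoke the standard mixing inequality; the argument above is the self-contained version of that reduction.
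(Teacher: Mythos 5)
Your proof is correct. Note that the paper does not prove this statement at all: it is quoted from the mixing-set literature (Günlük and Pochet) and used as a black box, so there is no internal proof to compare against. What you have written is, in essence, the classical argument for the mixing (star) inequality, carried out self-containedly and adapted to the variant where the continuous part is \(\max_{j\in[J]} f^j(x)\) rather than a single shared variable \(s\); your closing remark makes this reduction explicit, since the point \(\bigl(\bar f/B,\, g(x)\bigr)\) lies in the classical mixing set whenever \(x\in\mathcal{MIX}\). The three ingredients are all handled properly: the rounding step \(g^j(x)\geq\ceil{(u^j-\bar f)/B}\) uses integrality of \(g^j(x)\) and \(B>0\); the ceiling identity \(\ceil{u^j/B}-\ceil{(u^j-\bar f)/B}=\floor{\bar f/B}+\Ind{\nu^j\leq\bar f_0}\) checks out in both branches, including the edge case \(\nu^j=B\) (where \(\bar f_0<B\) forces the indicator to vanish) and the boundary \(\nu^j=\bar f_0\) (where the non-strict comparison is indeed the right one); and the assembly correctly uses \(\nu^j-\nu^{j-1}\geq 0\), \(\nu^J\leq B\), \(\floor{\bar f/B}\geq 0\) (from \(f^j(x)\geq 0\)), and the fact that sortedness makes \(\{j:\nu^j\leq\bar f_0\}\) an initial segment, so the two telescoping sums add up to \(B\floor{\bar f/B}+\bar f_0=\bar f\). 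This is exactly the standard proof, so there is nothing to flag.
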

To apply the mixing scheme, we divide both sides of the base inequalities in
\eqref{eqn:setcross-base} by \(-d^j\) to obtain
\[
\frac{z_j}{d^j} + x^{\top}\bar{\xi}^j \geq - \frac{d^j_0+\gamma}{d^j}, \quad \forall j \in [J].
\]
This would obey the structure of \(\mathcal{MIX}\) if \(-(d^j_0+\gamma)/d^j\)
(counterpart of \(u^j\)) were a constant, or equivalently, if variable
\(\gamma\) were a constant. Although this is not true in general, we observe
that, without loss of optimality, \(\gamma\) can take only a finite set of
values. \edits{It is worth mentioning that a similar result appeared in~\cite[Lemma~\(1\)]{ho-nguyen-2020-distr-robus}.}
\begin{proposition}%
\label{prop:reform-bin-encoding}
There exists an optimal solution \((x^*, \gamma^{*}, z^{*})\) to
(\DRCCbC{}), \edits{using the reformulation $Z$ defined
  in~\eqref{eqn:reform-mono},} such that
\begin{align*}
\gamma^* & = - \VaR{}_{1 - \epsilon}\left[-g(x^{*}, \hat{\xi})\right] \\
\edits{\mathrm{and} \quad z^*_j} & \edits{= \min \Set{0, \, g(x^*, \hat{\xi}^{j}) - \gamma^{*}}, \quad \forall j \in [N],}
\end{align*}
where \(g(x, \xi) = \min_{i \in [I]}\Bigl((x^{\top}\xi_i - v_i + 1)^+\Bigr)^{1/p}\).
\end{proposition}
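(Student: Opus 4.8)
The plan is to combine the CVaR interpretation~\eqref{eqn:reform-cvar} of the feasible set $Z$ with the classical fact that the value-at-risk attains the minimum in the Rockafellar--Uryasev representation~\eqref{eqn:cvar-def} of CVaR. Since (\DRCCbC{}) has a finite binary feasible region for $x$ and its objective $c^{\top}x$ depends on neither $\gamma$ nor $z$, it suffices to fix any optimal $x^{*}$ (equivalently, any $x^{*}\in Z$ of minimum cost) and exhibit a feasible pair $(\gamma^{*},z^{*})$ with $\gamma^{*}$ as claimed; the resulting triple is then automatically optimal.

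First I would invoke $x^{*}\in Z$ together with the CVaR interpretation~\eqref{eqn:reform-cvar}, which gives $\frac{\delta}{\epsilon}+\cvar{}_{1-\epsilon}\!\left[-g(x^{*},\hat{\xi})\right]\le 0$, where the CVaR is evaluated under the empirical distribution. By~\eqref{eqn:cvar-def} this CVaR equals $\min_{\gamma'}\{\gamma'+\tfrac{1}{N\epsilon}\sum_{j\in[N]}\posp{-g(x^{*},\hat{\xi}^{j})-\gamma'}\}$, and by the Rockafellar--Uryasev theorem~\cite{rockafellar-1999-optim-condit} the minimum is attained at $\gamma':=\VaR{}_{1-\epsilon}\!\left[-g(x^{*},\hat{\xi})\right]$. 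Setting $\gamma^{*}:=-\gamma'$ yields the claimed identity. To see $\gamma^{*}\in\reals_{+}$, note that $g(x,\xi)=\min_{i\in[I]}(\posp{x^{\top}\xi_{i}-v_{i}+1})^{1/p}\ge 0$, so $-g(x^{*},\hat{\xi})$ takes only nonpositive values; hence $\VaR{}_{1-\epsilon}[-g(x^{*},\hat{\xi})]\le 0$ and $\gamma^{*}\ge 0$.

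Next I would define $z^{*}_{j}:=\min\{g(x^{*},\hat{\xi}^{j})-\gamma^{*},\,0\}\le 0$ for every $j\in[N]$, so that the scenario constraints $z^{*}_{j}+\gamma^{*}\le g(x^{*},\hat{\xi}^{j})$ of the reformulation~\eqref{eqn:reform-mono} hold by construction. The remaining aggregate constraint $\delta-\gamma^{*}\epsilon\le\frac{1}{N}\sum_{j}z^{*}_{j}$ is verified by rewriting $\frac{1}{N}\sum_{j}z^{*}_{j}=-\frac{1}{N}\sum_{j}\posp{\gamma^{*}-g(x^{*},\hat{\xi}^{j})}=-\frac{1}{N}\sum_{j}\posp{-g(x^{*},\hat{\xi}^{j})-\gamma'}$ and dividing through by $\epsilon$; the inequality then becomes exactly $\frac{\delta}{\epsilon}+\bigl(\gamma'+\tfrac{1}{N\epsilon}\sum_{j}\posp{-g(x^{*},\hat{\xi}^{j})-\gamma'}\bigr)\le 0$, i.e.\ $\frac{\delta}{\epsilon}+\cvar{}_{1-\epsilon}[-g(x^{*},\hat{\xi})]\le 0$ evaluated at the minimizer $\gamma'$, which holds since $x^{*}\in Z$. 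Therefore $(x^{*},\gamma^{*},z^{*})$ lies in the feasible region of~\eqref{eqn:reform-mono}, and being of minimum cost it is optimal to (\DRCCbC{}).

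The only delicate points I anticipate are (i) tracking the sign change between the CVaR auxiliary variable $\gamma'$ appearing in~\eqref{eqn:reform-drccbc-cvar-def} and the nonnegative variable $\gamma$ of~\eqref{eqn:reform-mono}, which is precisely the substitution $\gamma=-\gamma'$ already used in the proof of Proposition~\ref{prop:reform-mono}, and (ii) the assertion that $\VaR{}_{1-\epsilon}$ realizes the minimum in~\eqref{eqn:cvar-def}; for the latter I would cite~\cite{rockafellar-1999-optim-condit} rather than reprove it. Everything else is routine manipulation of the $\posp{\cdot}$ terms.
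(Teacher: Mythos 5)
Your proposal is correct and follows essentially the same route as the paper: both rest on the CVaR reformulation~\eqref{eqn:reform-drccbc-cvar-def} of \(Z\) (with the substitution \(\gamma = -\gamma'\)) and on the Rockafellar--Uryasev fact that \(\VaR{}_{1-\epsilon}\) attains the minimum in~\eqref{eqn:cvar-def}. The only difference is that you spell out the construction of \(z^{*}\) and the feasibility check for~\eqref{eqn:reform-mono}, which the paper leaves implicit.
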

\begin{proof}
\editsII{
By Proposition~\ref{prop:reform-mono} and the definition~\eqref{eqn:cvar-def} of~\CVaR{}, we recast the set \(Z\) as}
\begin{align*}
  Z = \Set{x \in \binaries^n: \frac{\delta}{\epsilon} +
  \min_{\gamma'} \Bigl\{ \gamma' + \frac{1}{N\epsilon} \sum_{j \in [N]} \posp{-g(x, \hat{\xi}^j) - \gamma'}\Bigr\} \leq 0}.
\end{align*}
Then, by the definition~\eqref{eqn:cvar-def} of \CVaR{}, \(\gamma'\)
equals \(\VaR{}_{1 - \epsilon}\Bigl[-g(x^{*}, \hat{\xi})\Bigr]\) at
optimality. The conclusion follows.
\end{proof}

By Proposition~\ref{prop:reform-bin-encoding}, we can restrict the value of
\(\gamma\) to be in a discrete set \(\{r \in \reals: r^p \in \{0\} \cup [n]\}\)
because \((x^*)^{\top}\hat{\xi}_i\) and \(v_i\) are integers for all
\(i \in [I]\). In addition, since \(g(x, \hat{\xi}) \leq g(\ones, \hat{\xi})\)
almost surely for all \(x \in \binaries^n\) and (\DRCCbC{}) requires that
\(\CVaR{}_{1 - \epsilon}\bigl[-g(x, \hat{\xi})\bigr] \leq -\delta/\epsilon\), we
can further restrict the value of \(\gamma\) to be that
\begin{align*}
  & \gamma \in \Set{r \colon
    \ceil[\Bigg]{\left( \frac{\delta}{\epsilon} \right)^{p}}^{1/p} \leq r \leq - \VaR{}_{1 - \epsilon}(-g(\ones{}, \hat{\xi})),
    \ r^{p} \in [n]}.
\end{align*}
For ease of exposition, we denote this set as \(\{r_k: k \in \Gamma\}\), where
\(\Gamma\) is the set of indices, and let \(r_1\) be the smallest element. It
follows that \(\gamma \geq r_1\) at optimality and the base inequalities
in~\eqref{eqn:setcross-base} imply
\[
\frac{z_j}{d^j} + x^{\top}\bar{\xi}^j \geq - \frac{d^j_0+r_1}{d^j}, \quad \forall j \in [J].
\]
Then, applying the mixing scheme in Theorem~\ref{thm:mixing} produces a mixed
inequality for \(\mathcal{S}\):
\begin{equation}
  \max_{j \in [J]} \Set{\frac{z_j}{d^j}}
  \geq \sum_{j = 1}^J (\nu^j_1 - \nu^{j - 1}_1) (\tau^j_1 - x^{\top}\bar{\xi}^j), \label{eqn:mix-weak}
\end{equation}
where \(\displaystyle \tau^j_1 := \ceil[\Big]{-\frac{r_1 + d^j_0}{d^j}}\),
\(\nu^0_1 := 0\), and
\(\displaystyle \nu^j_1 := - \frac{r_1 + d^j_0}{d^j} - (\tau^j_1 - 1)\) such
that \(\nu^j_1 \geq \nu^{j-1}_1\) for all \(j \in [J]\). Nevertheless, the mixed
inequality~\eqref{eqn:mix-weak} is potentially weak because it simply relaxes
\(\gamma\) to be \(r_1\) and ignores the possibility of \(\gamma\) taking other
values in \(\{r_k: k \in \Gamma\}\). To strengthen it, we notice that
\(\gamma\), which equals one and only one \(r_k\) for \(k \in \Gamma\), admits a
binary encoding:
\begin{align*}
\gamma = \sum_{k \in \Gamma} r_k \gamma_k,
\quad{} \sum_{k \in \Gamma} \gamma_k = 1,
\end{align*}
where \(\{\gamma_{k} \in \binaries: k \in \Gamma\}\) are (auxiliary) binary
variables that determine the value of \(\gamma\). Accordingly, we recast set
\(\mathcal{S}\) as
\begin{align*}
  \setcross{}_{\mathbb{B}} := \left\{ (x, \gamma, z) \in \binaries^{n + |\Gamma|}\times\reals^J_- \colon \
  \begin{aligned}
  & - z_j - \sum_{k \in \Gamma} r_k \gamma_k \geq d^j \left( x^{\top} \bar{\xi}^j \right) + d^j_0,
  \quad \forall j \in [J] \\
  & \sum_{k \in \Gamma} \gamma_k = 1
  \end{aligned}\right\}.
\end{align*}
As a consequence, the mixed inequality~\eqref{eqn:mix-weak} is valid for a
restriction of \(\setcross{}_{\mathbb{B}}\) when fixing \(\gamma_1 = 1\), i.e.,
\(\setcross{}_{\mathbb{B}} \cap \{(x, \gamma, z): \gamma_1 = 1\}\). Hence, we
can view~\eqref{eqn:mix-weak} as a \emph{seed} inequality and strengthen it by
\emph{lifting} variables \(\gamma_2, \ldots, \gamma_{|\Gamma|}\) back (see,
e.g.,~\cite{wolsey-2014-integ,gu-2000-sequen-indep} for more on the lifting
scheme).
\begin{theorem} \label{thm:lift}
The following lifted inequality is valid for \(\mathcal{S}_{\mathbb{B}}\):
\begin{gather}
  \max_{j \in [J]} \Set{\frac{z_j}{d^j}}
  \geq \sum_{j = 1}^J (\nu^j_1 - \nu^{j - 1}_1) (\tau^j_1 - x^{\top}\bar{\xi}^j) + \sum_{k \in \Gamma} \alpha_k \gamma_k, \label{eqn:cut-cross} \\
  \mathrm{where} \quad \alpha_k
  := \posp{\min_{j \in [J]} \min \set{\nu^j_k - \nu^J_1, 0} +
  \sum_{j = 1}^J (\nu^{j}_1 - \nu^{j - 1}_1)(\tau^{j}_k - \tau^j_1)}, \quad \forall k \in \Gamma, \nonumber
\end{gather}
\(\displaystyle \tau^j_k := \ceil[\Big]{-\frac{r_k + d^j_0}{d^j}}\), \(\nu^0_1 = 0\), and 
\(\displaystyle \nu^j_k := -\frac{r_k + d^j_0}{d^j} - (\tau^j_k - 1)\) such that \(\nu^j_1 \geq \nu^{j-1}_1\) for all \(j \in [J]\).
\end{theorem}
\begin{proof}
For all \(k \in \Gamma\), let
\(\mathcal{S}_k := \mathcal{S}_{\mathbb{B}} \cap \{(x, \gamma, z): \gamma = r_k\}\).
We simultaneously lift variables \(\gamma_2, \ldots, \gamma_{|\Gamma|}\) back
into inequality~\eqref{eqn:mix-weak}. To this end, we search for lifting
coefficients \(\alpha_2, \ldots, \alpha_{|\Gamma|}\) such that
\begin{align*}
  \max_{j \in [J]} \Set{\frac{z_j}{d^j}}
  & \geq \sum_{j = 1}^J (
  \nu^j_1 - \nu^{j - 1}_1)(\tau^j_1 - x^{\top} \bar{\xi}^j) + \sum_{k \in \Gamma} \alpha_k \gamma_k,
  \quad \forall (x, \gamma, z) \in \mathcal{S}_{\mathbb{B}},
\end{align*}
where \(\alpha_1 := 0\). Since \(\sum_{k \in \Gamma}\gamma_k = 1\), fixing
\(\gamma_k = 1\) automatically forces all other \(\gamma_k\)'s to be zero.
Hence, the lifted inequality is valid if and only if
\begin{align*}
  \max_{j \in [J]} \Set{\frac{z_j}{d^j}}
  & \geq \sum_{j = 1}^J (
  \nu^j_1 - \nu^{j - 1}_1)(\tau^j_1 - x^{\top} \bar{\xi}^j) +  \alpha_k \gamma_k,
  \quad \forall (x, \gamma, z) \in \mathcal{S}_k, \ \forall k \in \Gamma.
\end{align*}
If \(k = 1\), this validity requirement is valid because \(\alpha_1 = 0\) and
the inequality reduces to~\eqref{eqn:mix-weak}. If
\(k \in \Gamma \setminus \{1\}\), the validity requirement is valid if and
only if
\begin{align}
  \alpha_k \leq \min_{x, z} ~~
  & \max_{j \in [J]} \Set{\frac{z_j}{d^j}} - \sum_{j = 1}^J
    (\nu^j_1 - \nu^{j - 1}_1)(\tau^j_1 - x^{\top} \bar{\xi}^j), \nonumber \\
  \st{} ~~
  & \frac{z_j}{d^j} \geq -\frac{r_k + d^j_0}{d^j} - x^{\top} \bar{\xi}^j, \quad \forall j \in [J], \tag{Lifting} \label{eqn:lifting} \\
  & x \in \binaries^n, z \in \reals^J_-. \nonumber
\end{align}
For fixed \(x \in \binaries^n\), \(z_j \leq 0\) and \(d^j < 0\) imply
\begin{align*}
  \frac{z_j}{d^j}
  \geq \posp{-\frac{r_k + d^j_0}{d^j} - x^{\top} \bar{\xi}^j}
  & = \posp{\nu^j_k + \tau^j_k - x^{\top} \bar{\xi}^j - 1} \\
  & = \nu^j_k \posp{\tau^j_k - x^{\top}\bar{\xi}^j} + (1 - \nu^j_k)\posp{\tau^j_k - x^{\top}\bar{\xi}^j - 1}
\end{align*}
for all \(j \in [J]\). Let
\(\Upsilon_k := \max \Set{\tau^j_k - x^{\top}\bar{\xi}^j \colon j \in [J]}\),
\(\mathcal{J}_k\) be the corresponding set of maximizers, and
\(i \in \mathcal{J}_k\) be a maximizer with the largest \(\nu^j_k\), i.e.,
\begin{align*}
  \mathcal{J}_k
  & := \Set{j \in [J] \colon \tau^j_k - x^{\top}\bar{\xi}^j = \Upsilon_k} \\
  \mathrm{and} \quad i
  & \in \argmax \Set{\nu^j_k \colon j \in \mathcal{J}_k}.
\end{align*}
Then, it is easy to verify that
\begin{align*}
  \max_{j \in [J]} \Set{\frac{z_j}{d^j}}
  = \nu^i_k \posp{\tau^i_k - x^{\top} \bar{\xi}^i} +
  (1 - \nu^i_k) \posp{\tau^i_k - x^{\top} \bar{\xi}^i - 1}
\end{align*}
at optimality of the~\eqref{eqn:lifting} problem. Specifically, if
\(\Upsilon_k \leq 0\), then \((\tau^j_k - x^{\top}\bar{\xi}^j) \leq 0\) for all
\(j \in [J]\) and so \(\max_{j \in [J]} \Set{z_j/d^j} = 0\). This implies a
lower bound for the objective value of~\eqref{eqn:lifting}:
\begin{align*}
  \max_{j \in [J]} \Set{\frac{z_j}{d^j}} - \sum_{j = 1}^J (
  \nu^j_1 - \nu^{j - 1}_1)(\tau^j_1 - x^{\top} \bar{\xi}^j)
  & = \sum_{j = 1}^J (
    \nu^j_1 - \nu^{j - 1}_1)(x^{\top} \bar{\xi}^j - \tau^j_1) \\
  & \geq \sum_{j = 1}^J (\nu^j_1 - \nu^{j - 1}_1) (\tau^j_k - \tau^j_1).
\end{align*}
If \(\Upsilon_k \geq 1\) then
\(\max_{j \in [J]} \Set{z_j/d^j} = (\tau^i_k - x^{\top}\bar{\xi}^i - 1) + \nu^i_k\),
and the lower bound for the objective value of~\eqref{eqn:lifting} becomes
\begin{align*}
  & \max_{j \in [J]} \Set{\frac{z_j}{d^j}} - \sum_{j = 1}^J (
  \nu^j_1 - \nu^{j - 1}_1)(\tau^j_1 - x^{\top} \bar{\xi}^j) \\
  = \;
  & (\tau^i_k - x^{\top}\bar{\xi}^i - 1) + \nu^i_k - \sum_{j = 1}^J (\nu^j_1 - \nu^{j - 1}_1)(\tau^j_k - x^{\top} \bar{\xi}^j) -
    \sum_{j = 1}^J (\nu^j_1 - \nu^{j - 1}_1)(\tau^j_1 - \tau^j_k) \\
  \geq \;
  & (\tau^i_k - x^{\top}\bar{\xi}^i - 1) + \nu^i_k -
    \sum_{j \in \mathcal{J}_k} (\nu^j_1 - \nu^{j - 1}_1)(\tau^i_k - x^{\top} \bar{\xi}^i) -
    \sum_{j \not \in \mathcal{J}_k} (\nu^j_1 - \nu^{j - 1}_1)(\tau^i_k - x^{\top} \bar{\xi}^i - 1) \\
  & -\sum_{j = 1}^J (\nu^j_1 - \nu^{j - 1}_1) (\tau^j_1 - \tau^j_k) \\
  = \;
  & (\tau^i_k - x^{\top}\bar{\xi}^i - 1) (1 - \nu^J_1) +
    \nu^i_k - \sum_{j \in \mathcal{J}_k} (\nu^j_1 - \nu^{j - 1}_1) +
    \sum_{j = 1}^J (\nu^j_1 - \nu^{j - 1}_1)(\tau^j_k - \tau^j_1) \\
  \geq \;
  & \nu^i_k - \nu^J_1 + \sum_{j = 1}^J (\nu^{j}_1 - \nu^{j - 1}_1)(\tau^{j}_k - \tau^j_1),
\end{align*}
where the first inequality is because
\(\tau^j_k - x^{\top}\bar{\xi}^j \leq \tau^i_k - x^{\top}\bar{\xi}^i - 1\) if
\(j \notin \mathcal{J}_k\) and the second inequality is because
\(\tau^i_k - x^{\top}\bar{\xi}^i = \Upsilon_k \geq 1\), \(\nu^J_1 \leq 1\), and
\(\sum_{j \in \mathcal{J}_k} (\nu^j_1 - \nu^{j - 1}_1) \leq \sum_{j = 1}^J (\nu^j_1 - \nu^{j - 1}_1) = \nu^J_1\).
The two cases of the value of \(\Upsilon_k\) together suggests the following
valid choice of \(\alpha_k\):
\begin{align*}
  \alpha_k = \min_{j \in [J]} \min \set{\nu^j_k - \nu^J_1, 0} +
  \sum_{j = 1}^J (\nu^{j}_1 - \nu^{j - 1}_1)(\tau^{j}_k - \tau^j_1).
\end{align*}
But \(\alpha_k = 0\) is also a valid choice because, in that case, the lifted
inequality reduces to \eqref{eqn:mix-weak}, which is valid for
\(\mathcal{S}_{\mathbb{B}} \supseteq \mathcal{S}_k\). Therefore, we can take the
maximum of these two valid choices. The conclusion follows.
\end{proof}
We remark that, thanks to constraint \(\sum_{k \in \Gamma}\gamma_k = 1\), the
lifted inequality in Theorem \ref{thm:lift} is sequence-\emph{independent}
(cf.~\cite{gu-2000-sequen-indep}). In our implementation, we generate the base
inequalities in~\eqref{eqn:setcross-base} from the single-scenario valid
inequality~\eqref{eqn:cut-single}. We add the lifted inequalities via the lazy
callback.


\section{Extension to Knapsack Chance Constraint}%
\label{sec:extension-knaps}
\edits{
In this section, we extend the reformulation and valid inequalities for (\DRCCbC{}) to knapsack chance constraints with
random, binary-valued coefficients. Specifically, consider the following
ambiguous knapsack chance constraint
\begin{align}
  \inf_{\mathbb{P} \in \mathcal{P}} \mathbb{P}\Set{
  x^{\top}\tilde{\xi}_i \leq v_i, \ \forall i \in [I]}
  \geq 1 - \epsilon, \tag{KCC} \label{eq:knapsack-drcc}
\end{align}
where the Wasserstein ambiguity set $\mathcal{P}$ is defined in~\eqref{eq:intro-wass-ambiguity},
$\tilde{\xi} \in \Xi = \{0, 1\}^{I \times n}$, and $v_i \in [n-1]$ for all
$i \in [I]$. In this case, we can follow the proof of Proposition~\ref{prop:reform-mono} to show that
\begin{gather*}
\min_{\xi: \ x^{\top} \xi_i \geq v_i + 1} \norm{\xi - \hat{\xi}^j}_p =
\left( \posp{v_i + 1 - x^{\top} \hat{\xi}^j_i} \right)^{1/p} +
\chi \left( x^{\top} \ones{} \geq v_i + 1 \right),
\end{gather*}
where \(\chi(\cdot)\) denotes a characteristic function, which takes value \(0\)
if \(x^{\top}\ones{} \geq v_i + 1\) and takes value \(+\infty\) otherwise.
Hence, if we assume $v_1 \leq v_2 \leq \cdots \leq v_I$ \WLOG{} then (KCC) is equivalent to \(\bigcup_{k=0}^I Z_k\), where
$Z_0 := \{x \in \{0, 1\}^n: x^{\top}\ones{} \leq v_1\}$ and, for all
$k \in [I]$, $Z_k$ resembles the reformulation $Z$ for (DRC) in Proposition~\ref{prop:reform-mono}. That is,
\begin{align*}
Z_k := \bigl\{x \in \binaries^n \colon v_k + 1 \leq x^{\top}\ones{} \leq v_{k+1}\bigr\} \cap \Set{x \colon
  \begin{aligned}
    & \exists \ \gamma \in \reals_+, z \in \reals^N_-:\\
    & \delta - \gamma \epsilon \leq \frac{1}{N} \sum_{j \in [N]} z_j, \\
    & z_j + \gamma \leq \min_{i \in [k]} \left( \posp{v_i + 1 - x^{\top} \hat{\xi}^j_i} \right)^{1/p},
    \forall j \in [N]
  \end{aligned}},
\end{align*}
where $v_{I+1} := n$. As a consequence, we can readily adapt the valid inequalities in Section~\ref{sec:valid_ineqs} to solve optimization problems involving (KCC). For example, under the complement $y := \ones{} - x$,
the final constraints in the definition of $Z_k$ can be rewritten as
\begin{align*}
z_j + \gamma \leq \left( \posp{y^{\top}\hat{\xi}^j_i - \ones^{\top}\hat{\xi}^j_i + v_i + 1} \right)^{1/p},
\end{align*}
which is precisely the setting studied in Section~\ref{sec:valid_ineqs}
if \((\ones^{\top}\hat{\xi}^j_i - v_i - 1) \geq 1\). Hence, the valid inequalities \eqref{eqn:cut-single} and \eqref{eqn:cut-cross} can be adapted to compute (KCC) more
effectively. On the other hand, when \(\beta' := (\ones^{\top}\hat{\xi}^j_i - v_i - 1) \leq 0\), it can be shown that the convex hull of the mixed-integer set
\begin{align*}
  \mathcal{X}' := \Set{
  (\theta, x) \in \reals \times \binaries^n \colon
  \theta \leq f(x^{\top}\xi - \beta')}
\end{align*}
coincides with the continuous relaxation of its piecewise linear reformulation, i.e.,
\begin{align*}
  \Co(\mathcal{X}') = \Set{
  (\theta, x) \in \reals \times [0,1]^n \colon
  \theta \leq a_{\ell} (x^{\top}\xi - \beta') + b_{\ell}, \,
  \forall \ell \in \set{1-\beta', \ldots, n-\beta'}}.
\end{align*}
Finally, we notice
that a special case of (KCC) is an ambiguous set packing chance constraint,
in which $v_i = 1$ for all $i \in [I]$. Hence, our reformulation and valid inequalities
also apply to set packing chance constraints with random set membership.


}

\section{Numerical Experiments}%
\label{sec:exps}
We demonstrate the out-of-sample performance of (\DRCCbC{}) and the
effectiveness of our two-stage reformulation and valid inequalities in the following four numerical experiments:
\begin{enumerate}[label={\arabic*.}]
\item Section~\ref{sec:exps-oos} reports the out-of-sample performance of (\DRCCbC{}) with various radii $\delta$ and data sizes $N$ when the entries of $\tilde{\xi}$ are i.i.d.;
\item Section~\ref{sec:out-of-sample-case-study} examines correlated $\tilde{\xi}$ entries, using a context of medical facility location under natural disasters;
\item Section~\ref{sec:value-support} evaluates the value of incorporating the support information $\Xi = \binaries{}^{I\times n}$ into (\DRCCbC{}) as opposed to relaxing it to be $\Xi = \mathbb{R}^{I\times n}$;
\item Section~\ref{sec:exps-effi} demonstrates the strength of the valid inequalities derived in Section~\ref{sec:valid_ineqs}.
\end{enumerate}
\edits{All experiments in this section are conducted using the Python API of Gurobi
\(9.0.2\) on a single core of an Intel Xeon Gold \(6154\) Processor provided by
\href{https://arc-ts.umich.edu/greatlakes/configuration/}{the UM Great Lakes cluster.}}

\subsection{Independent uncertain parameters}%
\label{sec:exps-oos}

In this experiment, we generate
\emph{testing} data of \(\txi\) from the Bernoulli distribution with \iid{}
entries, and the probability of success \(q_i := \Probt{}\Set{\txi_{ij} = 1}\)
is uniformly sampled from the interval \([0.4, 0.8]\). In the out-of-sample
evaluation, this specific choice of \(\Probt{}\) facilitates the feasibility
checking of a given solution \(x \in \binaries^n\) because
\begin{align}
  \Probt{} \Set{x^{\top} \txi_i \geq v_i, \, \forall i \in [I]}
  & = \prod_{i = 1}^I \Probt{} \Set{x^{\top} \txi_i \geq v_i} \nonumber{}\\
  & = \prod_{i = 1}^I \sum_{s = v_i}^{n} \Probt{} \Set{ \sum_{k \colon x_k = 1} \txi_{ik} = s} \nonumber{}\\
  & = \prod_{i = 1}^I \sum_{s = v_i}^{n} \binom{\ones^{\top} x}{s} q_i^s (1 - q_i)^{(\ones^{\top} x) - s}
    \label{eqn:exps-oos}
\end{align}
admits a closed-form expression. In addition, we generate the \emph{training}
historical data \(\set{\hat{\xi}_j}_{j=1}^N\) by adding Gaussian noise to
\(\txi\):
\begin{align*}
\hat{\xi}_{ik} := \mbox{round}(\txi_{ik} + 0.25 \cdot \tilde{e}), \quad \forall i \in [I], k \in [n], 
\end{align*}
where \(\tilde{e}\) is a standard Gaussian random variable and
\(\mbox{round}(\cdot)\) rounds real numbers to \(\binaries\). For a given
solution \(x \in \binaries^n\), checking whether it is feasible for chance
constraint~\eqref{eqn:ccbc-constr} simplifies to computing its out-of-sample performance
using~\eqref{eqn:exps-oos} and comparing with the risk level \(1 - \epsilon\). We compare the out-of-sample performance of (\DRCCbC{}) with the sample average approximation formulation:
\begin{align*}
  \min ~~
  & c^{\top}x, \\
  \st{} ~~
  & \frac{1}{N} \sum_{j = 1}^N z_j \geq 1 - \epsilon, \tag{\SAA{}}\\
  & x^{\top} \hat{\xi}^j_i - v_i \geq v_m (z_j - 1), \quad \forall i \in [I], j \in [N],\\
  & x \in \binaries^n, z \in \binaries^N.
\end{align*}
Random test instances with \(n = 30, I = 10\) are generated in the
aforementioned way. For each instance, \(N \in \{100, 200, 300, 400, 500\}\)
training data are generated, the \RHS{} \(v_i, i \in [I]\) are all \(1\),
coefficients \(c\) in the objective function are uniformly sampled from integers
between \(1\) and \(100\), the radius \(\delta\) of the Wasserstein ball enumerates
\(\{0.05, 0.07, 0.09, 0.11, 0.13, 0.15, 0.17, 0.19, 0.21, 0.23, 0.25, 0.27\}\),
and the risk level \(\epsilon = 0.1\). For each parameter setting, we compute the
average of out-of-sample performance, as well as its \(90\%\) confidence interval, for the
returned solution over \(5\) different random instances and study how the radius \(\delta\) and the sample
size \(N\) affect the out-of-sample performance of (\DRCCbC{}) and
(\SAA{}).

\begin{figure}[!htbp]
\centering
\begin{subfigure}{0.48\textwidth}
\centering
\includegraphics[width=\textwidth]{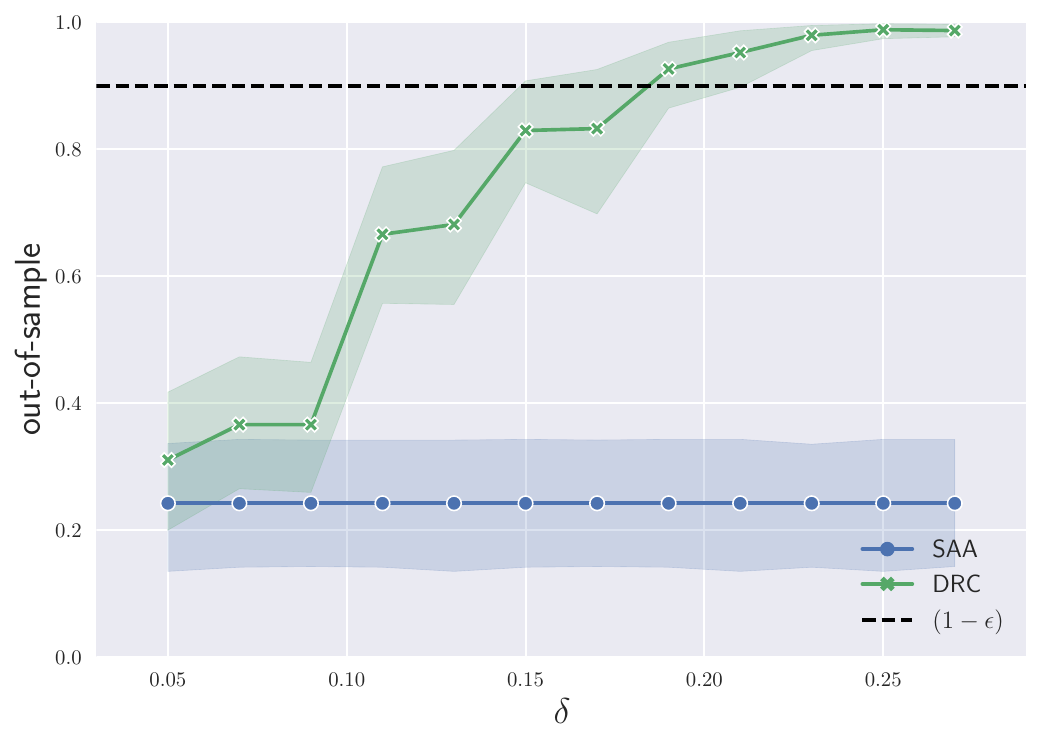}
\caption{\(\epsilon = 0.1, N = 20\)}
\label{fig:exps-delta-vs-oos-a}
\end{subfigure}
\hfill
\begin{subfigure}{0.48\textwidth}
\centering
\includegraphics[width=\textwidth]{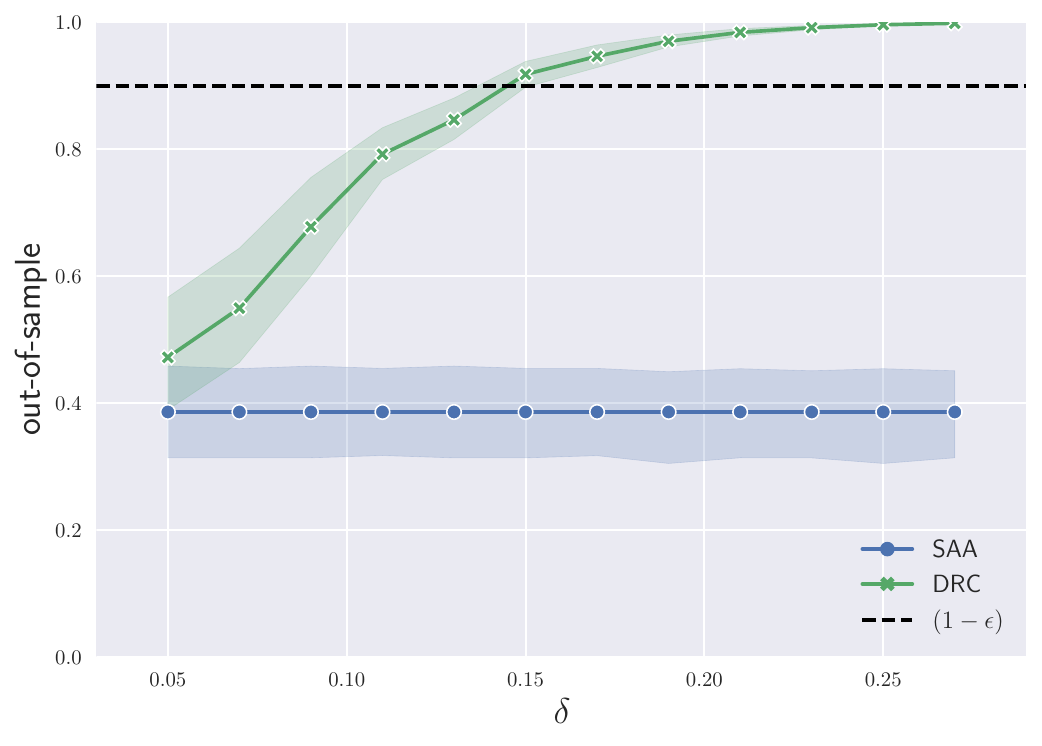}
\caption{\(\epsilon = 0.1, N = 300\)}
\end{subfigure}
\caption{Out-Of-Sample Performance of (\DRCCbC{}) and (\SAA{}) as a function of \(\delta\).}%
\label{fig:exps-delta-vs-oos}
\end{figure}
\subsubsection{Impact of the radius \texorpdfstring{\(\delta\)}{d} on out-of-sample performance}%
\label{sec:exps-oos-radius}

Fig.~\ref{fig:exps-delta-vs-oos} displays the impact of \(\delta\) on
out-of-sample performance of the (\DRCCbC{}) and (\SAA{}) optimal solutions. We notice that
the out-of-sample performance of (\DRCCbC{}) increases as \(\delta\) becomes larger. This
makes sense because the larger the radius \(\delta\) is, the more distributions
the Wasserstein ball \(\mathcal{P}\) includes, and accordingly the more likely
(\DRCCbC{}) generates a solution feasible with respect to \(\Probt{}\). The
(\SAA{}) model, however, yields a low out-of-sample performance. Specifically, Fig.~\ref{fig:exps-delta-vs-oos-a} shows that, with a proper
choice of \(\delta\), (\DRCCbC{}) is able to provide a reliable solution
satisfying the chance constraint even when the training data is very limited. In
contrast, (\SAA{}) does not return a feasible solution even if the size of
training data has a tenfold increase.

\begin{figure}[!htbp]
\centering
\begin{subfigure}{0.48\textwidth}
\centering
\includegraphics[width=\textwidth]{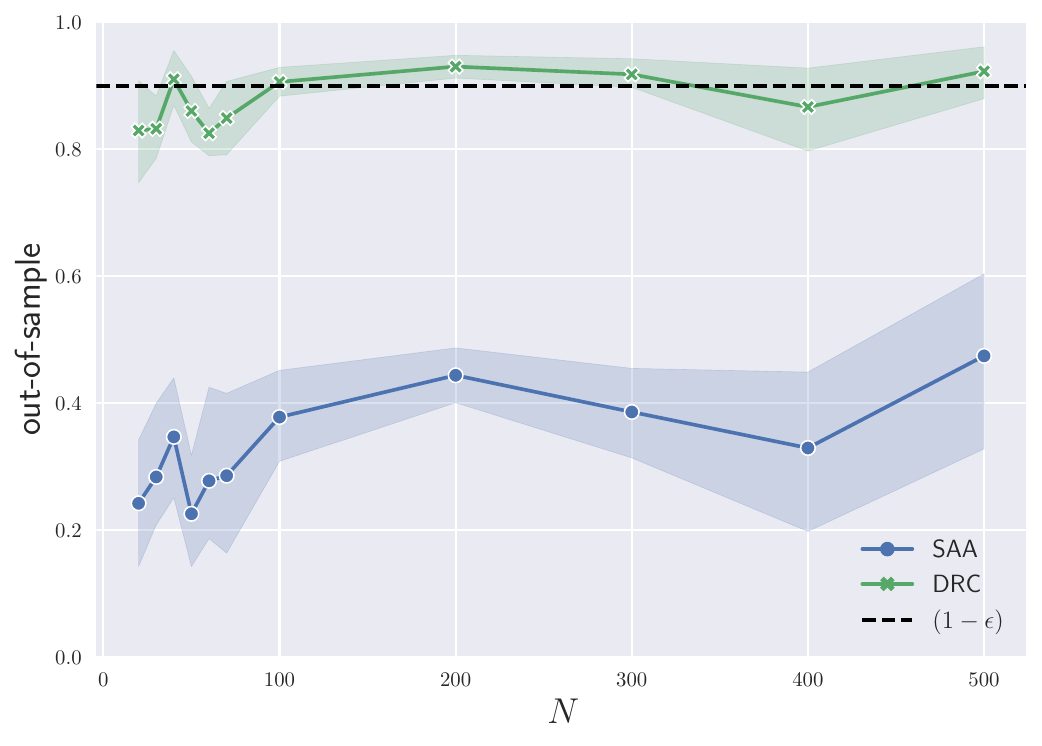}
\caption{\(\epsilon = 0.1, \delta = 0.15\)}
\end{subfigure}
\hfill
\begin{subfigure}{0.48\textwidth}
\centering
\includegraphics[width=\textwidth]{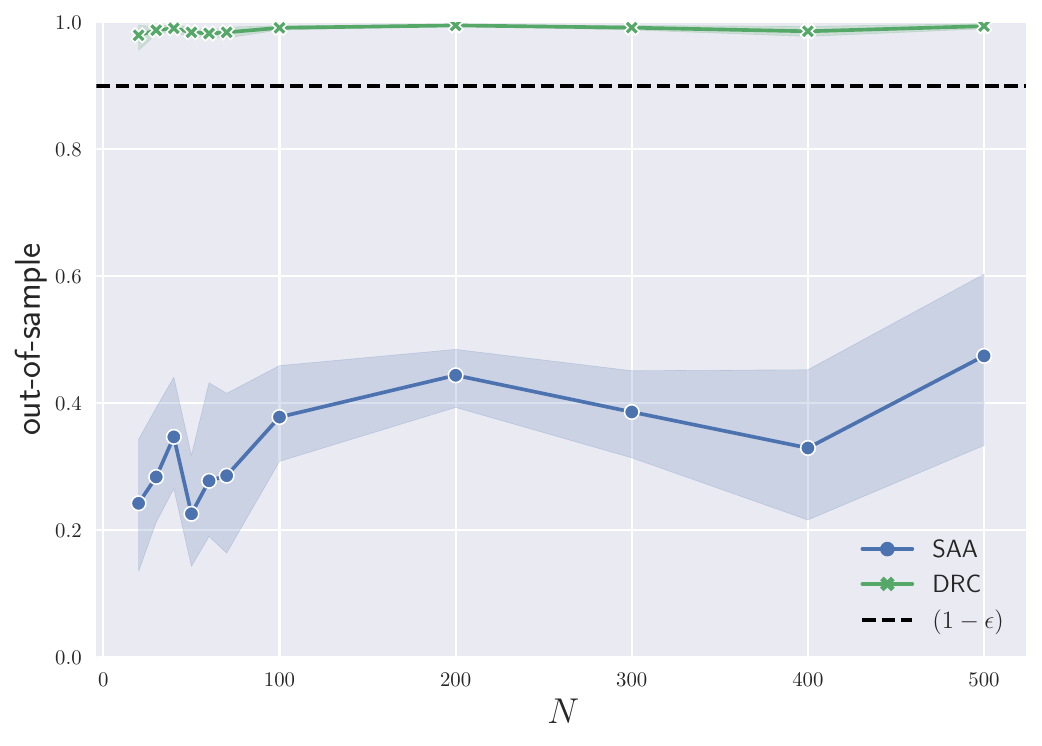}
\caption{\(\epsilon = 0.1, \delta = 0.23\)}
\end{subfigure}
\caption{Out-of-sample Performance of (\DRCCbC{}) and (\SAA{}) as a function of
  \(N\).}%
\label{fig:exps-sample-vs-oos}
\end{figure}
\subsubsection{Impact of the data size \texorpdfstring{\(N\)}{N} on out-of-sample performance}%
\label{sec:exps-oos-sample}

Fig.~\ref{fig:exps-sample-vs-oos} displays how \(N\) affects
out-of-sample performance of the (\DRCCbC{}) and (\SAA{}) optimal solutions. We observe that
the out-of-sample performance of both models shows an increasing trend as the data size
becomes larger. Specifically, (\DRCCbC{}) can return a high-quality solution with a proper
choice of radius \(\delta\) depending on much less data. In contrast, (\SAA{})
does not produce a feasible solution even when \(500\) training data are
provided. In addition, (\DRCCbC{}) is significantly more stable than (\SAA{}),
as (\DRCCbC{}) generates a much narrower \(90\%\) confidence interval (the
shaded area in Fig.~\ref{fig:exps-sample-vs-oos}) than (\SAA{}). This
demonstrates that (\DRCCbC{}) is capable of generating reliable and stable
solutions even with limited training data.

\begin{figure}[!htbp]
\centering
\includegraphics[width=\textwidth]{./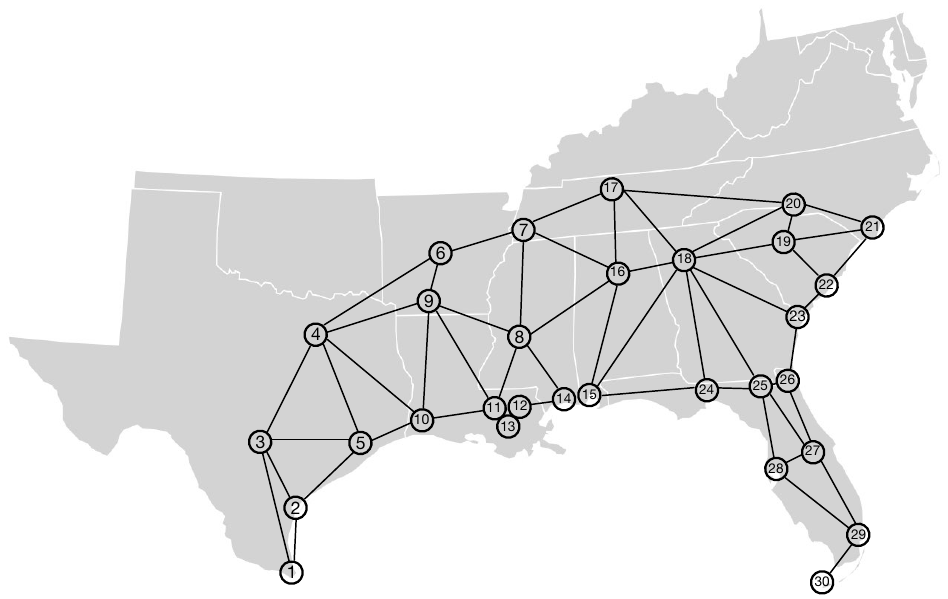}
\caption{Network of a medical facility location case study.}
\label{fig:exps-transport-net}
\end{figure}
\subsection{Dependent uncertain parameters}%
\label{sec:out-of-sample-case-study}
\edits{
In this experiment, we test (\DRCCbC{}) with dependent uncertain parameters based on a medical facility location problem with natural disasters. Specifically, we consider an undirected network with \(I\) nodes and a set \(\mathcal{E} \subseteq [I] \times [I]\) of roads connecting these nodes. As a special case of (\DRCCbC{}) with $n = I$, this problem establishes medical facilities among these $I$ nodes in order to cover the medical needs from the same set of nodes under coverage uncertainty. The binary random variables $\tilde{\xi}_{ij}$, $i,j \in [I]$ indicate whether a facility in node $j$ can cover the need from node $i$. To generate correlated $\tilde{\xi}_{ij}$, we consider natural disasters and let $\tilde{H}_j$ denote a binary random variable such that $\tilde{H}_j = 1$ indicates a disaster in node $j$. In addition, we partition the nodes into subsets $\{S_k\}_{k=1}^K$ according to geographic proximity and designate that if a node in a subset is affected by the disaster then all nodes in the same subset are affected, i.e., $\tilde{H}_j = 1$ for all $j \in S_k$ whenever $\tilde{H}_i = 1$ for an $i \in S_k$. Furthermore, we consider a random travel time $\tilde{T}_e$ on each road $e \in \mathcal{E}$ and assume that $\tilde{T}_e$ depends on the two nodes road $e$ connects: if neither of the nodes is affected by the disaster then $\tilde{T}_e$ takes a constant value $T^0_e$; and if either node is affected then $\tilde{T}_e$ follows a Gaussian distribution $N(4 \cdot T^0_e, 1)$. Accordingly, we define $\tilde{\xi}_{ij}$ as
\begin{align}
  \label{eq:exps-model}
  \tilde{\xi}_{ij} := (1 - \tilde{H}_j) \times \Ind{%
  \sum_{e \in \Path{}(j, i)} \tilde{T}_e \leq T_{\text{max}}},
\end{align}
where \(\Path{}(j, i)\) represents the shortest path from node \(j\) to
node \(i\) after the disaster and \(T_{\text{max}}\) denotes the longest time allowed to provide
medical service to a disaster node. That is, $\tilde{\xi}_{ij} = 1$ only when node $j$ is not affected by the disaster and the shortest path from $j$ to $i$ is shorter than $T_{\text{max}}$.
\begin{figure}[!htbp]
\centering
\begin{subfigure}{0.48\textwidth}
\centering
\includegraphics[width=\textwidth]{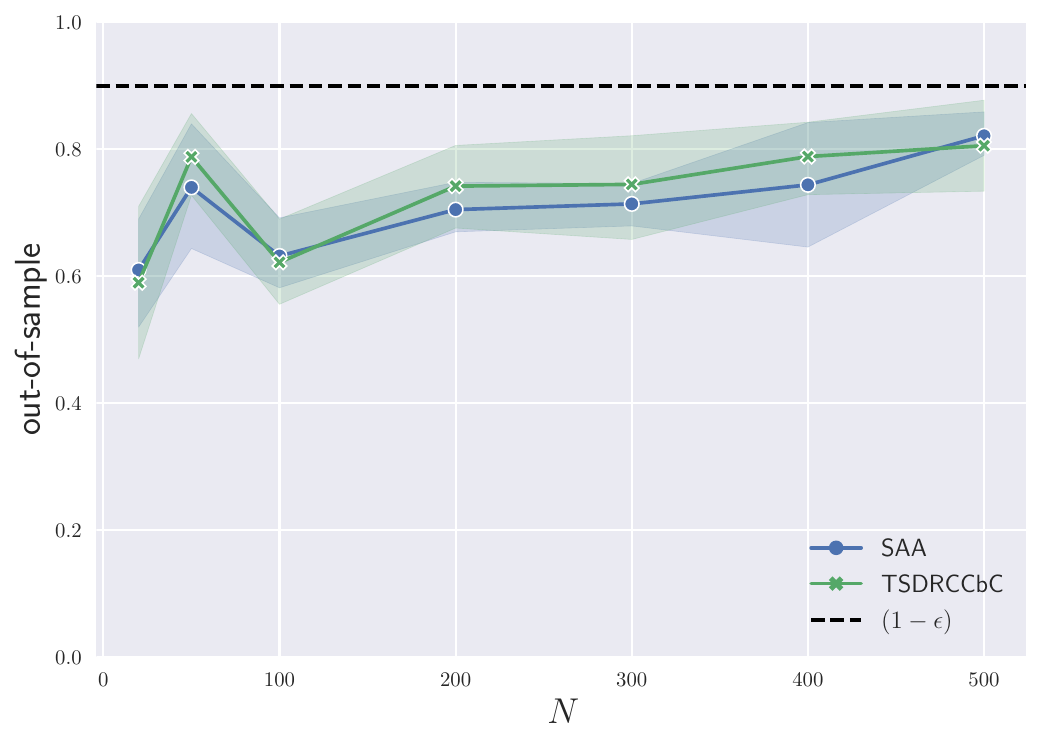}
\caption{\(\epsilon = 0.1, \delta = 0.03\)}
\end{subfigure}
\hfill
\begin{subfigure}{0.48\textwidth}
\centering
\includegraphics[width=\textwidth]{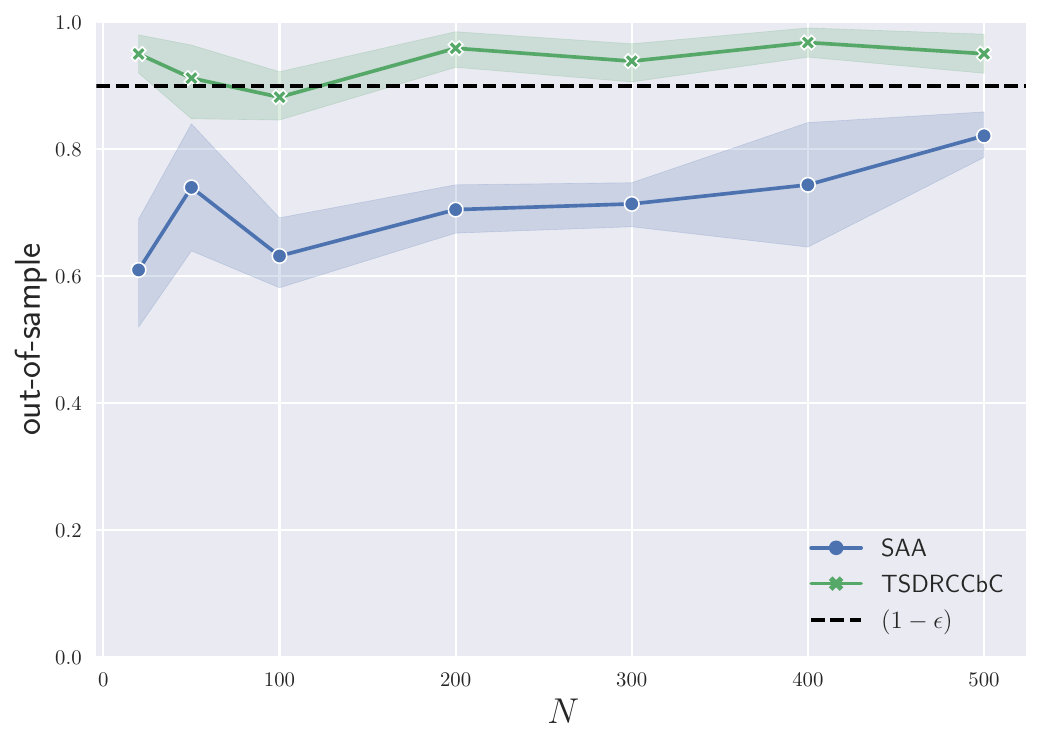}
\caption{\(\epsilon = 0.1, \delta = 0.15\)}
\end{subfigure}
\caption{Out-of-sample performance of (\DRCCbC{}) as a function of \(N\).}%
\label{fig:exps2-sample-vs-oos}
\end{figure}

Our case study is based on a network of southern US with $n = I = 30$ (cf.~\cite{shehadeh-2020-distr-robus, rawls-2010-pre-posit,velasquez-2020-prepos-disas}; see Figure~\ref{fig:exps-transport-net}). 
The partition $\{S_k\}_{k=1}^K$ of these nodes includes subsets \(\set{11, 12, 13}\), \(\set{14, 15}\), \(\set{19, 20}\), \(\set{22, 23}\), 
\(\set{24, 25, 26}\), \(\set{27, 28}\), and all the remaining subsets are singletons.

First, to generate $\tilde{H}_j$, we assign all nodes into three categories: \(\mathcal{C}_1 := \set{2, 5, 11, 13, 14, 15, 29, 30, 21, 22}\), \(\mathcal{C}_2 := \set{1, 3, 10, 12, 25, 26, 27, 28, 23}\), and \(\mathcal{C}_3 := [I] \setminus (\mathcal{C}_1 \cup \mathcal{C}_2)\). For each node $j$, we independently draw a Gaussian random number centered around the category index of $j$ with a standard deviation of $1$. For example, we sample from $N(2, 1)$ for node $1$ and from $N(1, 1)$ for node $2$. Then, we set $\tilde{H}_j = 1$ if node $j$ generates the smallest Gaussian random number, and we broadcast $\tilde{H}_j = 1$ in the subset $j$ belongs with. For example, if $j = 1$ then $\tilde{H}_1 = 1$ and $\tilde{H}_i = 0$ for all $i \in [I]\setminus\{1\}$; and if $j = 11$ then $\tilde{H}_{11} = \tilde{H}_{12} = \tilde{H}_{13} = 1$ and $\tilde{H}_i = 0$ for all $i \in [I]\setminus\{11, 12, 13\}$.

Second, we generate the random travel times $\tilde{T}_e$ for all $e \in \mathcal{E}$ based on $\{\tilde{H}_j\}_{j=1}^I$ as described above and the random coverage $\tilde{\xi}$ using \eqref{eq:exps-model} and $T_{\text{max}} := 12$. For each node $i \in [I]$, the coverage level $v_i$ is set to be 4 minus the category index of $i$. For example, $v_1 = 2$ and $v_2 = 3$.

\begin{figure}[!htbp]
\centering
\begin{subfigure}{0.48\textwidth}
\centering
\includegraphics[width=\textwidth]{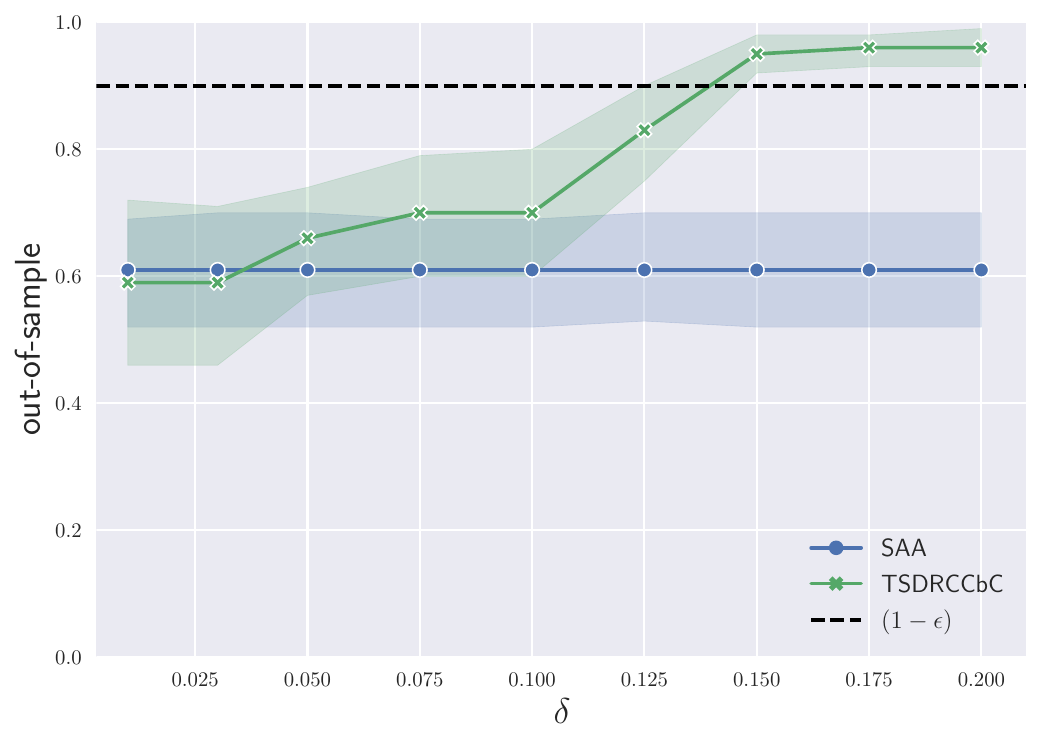}
\caption{\(\epsilon = 0.1, N = 20\)}
\end{subfigure}
\hfill
\begin{subfigure}{0.48\textwidth}
\centering
\includegraphics[width=\textwidth]{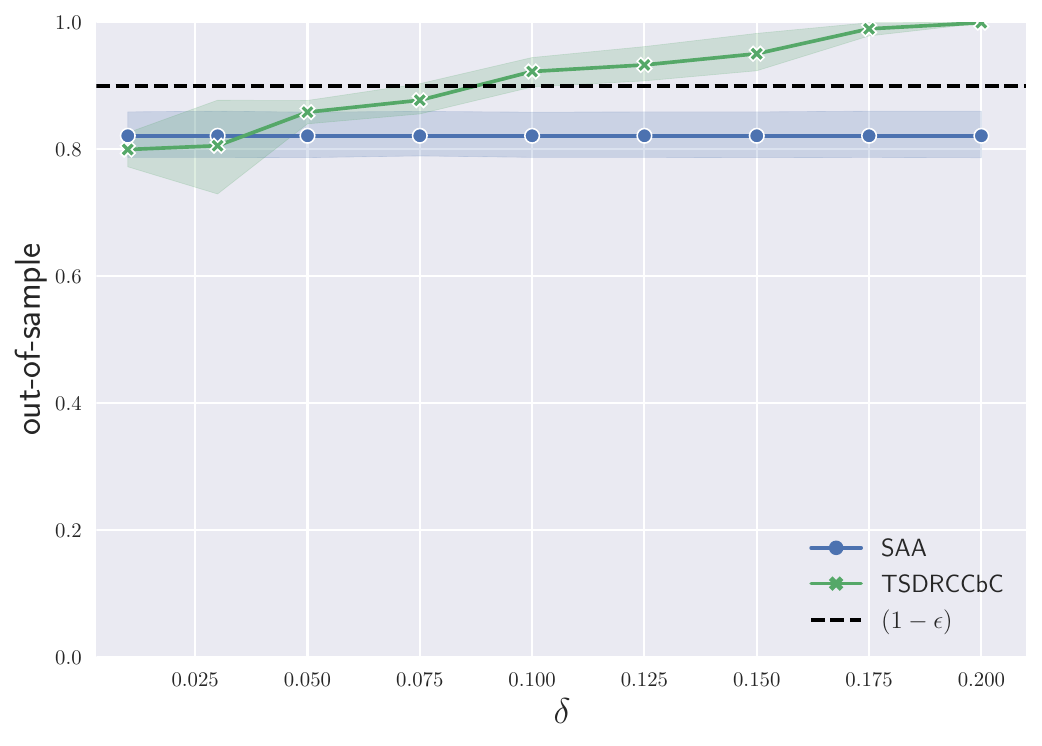}
\caption{\(\epsilon = 0.1, N = 500\)}
\end{subfigure}
\caption{Out-of-sample performance of (\DRCCbC{}) as a function of \(\delta\).}%
\label{fig:exps2-delta-vs-oos}
\end{figure}
Third, to evaluate the out-of-sample performance of (\DRCCbC{}), we generate \(N \in \{20, 50, 100, 200, 300,\) \(400, 500\}\) samples as training data,
the radius \(\delta\) of the Wasserstein ball are taken from
\(\{0.01, 0.03,\) \(0.05, 0.075, 0.1, 0.125, 0.15, 0.175, 0.2\}\), and the risk
level \(\epsilon = 0.1\). For each parameter setting, we generate \(5\) different
random instances, evaluate the out-of-sample performance using \(100,000\)
random coverage samples generated in the same way as the training data, and report the average performance along with the
\(90\%\) confidence interval in Figures~\ref{fig:exps2-sample-vs-oos}
and~\ref{fig:exps2-delta-vs-oos}. Figure~\ref{fig:exps2-sample-vs-oos}
depicts the out-of-sample performance as a function of the data size $N$. From this figure, we observe that when \(\delta\) is small, the
performance of (\DRCCbC{}) and (\SAA{}) are similar. As \(\delta\) increases,
\eg{}, when \(\delta = 0.15\), we observe a large improvement in the out-of-sample
performance of (\DRCCbC{}) as it requires feasible solutions to satisfy
chance constraints with respect to more distributions. In
Figure~\ref{fig:exps2-delta-vs-oos}(a), even with a small training data size, \eg{}, \(N = 20\), (\DRCCbC{}) can return a reliable solution when
\(\delta\) is properly chosen, say \(\delta = 0.15\). In contrast, (\SAA{})
did not produce a feasible solution in the sense of out-of-sample performance, even when \(N = 500\) as displayed 
in Figure~\ref{fig:exps2-delta-vs-oos}(b).
}
\subsection{Value of Incorporating Support Information}\label{sec:value-support}

\edits{A key difference between this paper and~\citet{xie-2019-distr-robus} lies in the support $\Xi$ of uncertainty parameters. We derive a reformulation $Z$ of (\DRCCbC{}) in \eqref{eqn:reform-mono} by assuming a binary-valued support $\Xi = \{0, 1\}^{I\times n}$. If we relax it to be $\Xi = \mathbb{R}^{I\times n}$, a different reformulation of the feasible region produced by chance constraint \eqref{eqn:drccbc-constr} follows from Theorem 1 and Proposition 1 of~\cite{xie-2019-distr-robus}:
\begin{align}
  Z_0 = \Set{
  x \in \binaries^n \colon
  \begin{aligned}
  & \exists \, \gamma \in \reals_+, \nu \in \reals_{+}, z \in \reals^N_- \colon \\
  & \delta \nu - \gamma\epsilon \leq \frac{1}{N} \sum_{j \in [N]} z_j, \\
  & \norm{x}_{p/(p-1)} \leq \nu, \\
  & z_j + \gamma \leq \posp{x^{\top} \hat{\xi}^j_i - v_i}, \forall i \in [I], j \in [N] \\
  \end{aligned}}. \label{eq:set-covering-of-xie}
\end{align}
Intuitively, as $Z$ relies on a more restricted support, it leads to a larger feasible region and accordingly (\DRCCbC{}) becomes less conservative. The following proposition formalizes this intuition.
\begin{proposition} \label{prop:comparison}
For fixed $\delta$ and $\epsilon$, it holds that \(Z_0 \subseteq Z\).
\end{proposition}
\begin{proof}
Since $v_i \geq 1$ for all $i \in [I]$, $x = 0$ is not feasible to (\DRCCbC{}). Then, we recast $Z_0$ in \eqref{eq:set-covering-of-xie} as follows by projecting out variable $\nu$:
\begin{align*}
  Z_0 = \Set{
  x \in \binaries^n \colon
  \begin{aligned}
  & \exists \, \gamma \in \reals_+, z \in \reals^N_- \colon \\
  & \delta - \gamma\epsilon \leq \frac{1}{N} \sum_{j \in [N]} z_j, \\
  & z_j + \gamma \leq \min_{i \in [I]}\frac{\posp{x^{\top}\hat{\xi}^j_i - v_i}}{\norm{x}_*}, \forall j \in [N] \\
  \end{aligned}},
\end{align*}
where $\norm{\cdot}_* := \norm{\cdot}_{p/(p-1)}$ represents the dual norm of $\norm{\cdot}_p$. Hence, it suffices to show
\[
\frac{\posp{x^{\top} \xi - v}}{\norm{x}_*}
\leq \left( \posp{x^{\top} \xi - v + 1} \right)^{1/p}
\]
for any \(0 \neq x \in \binaries^n\),
\(\xi \in \binaries^n\), and \(v \in [n]\). To this end, we discuss the following two cases.
\begin{enumerate}
\item When \(x^{\top} \xi \leq v\), we have
      \[
      \frac{\posp{x^{\top}\xi - v}}{\norm{x}_*} = 0
      \leq \left( \posp{x^{\top} \xi - v + 1} \right)^{1/p}.
      \]
\item When \(x^{\top} \xi \geq v + 1\), let \(\mathcal{T}\) be the index set of
      \(x \wedge \xi\), \ie{},
      \(\mathcal{T} := \set{i \in [n] \colon \xi_i = x_i = 1}\), then
      \(\abs{\mathcal{T}} \geq v + 1\). Let \(\mathcal{T}'\) be \(\mathcal{T}\) with
      arbitrary \(v\) elements removed and \(\xi'\) be the indicating vector of \(\mathcal{T}'\). That is, $\xi' \in \binaries{}^n$ and, for all $i \in [n]$, $\xi'_i = 1$ if and only if $i \in \mathcal{T}'$. Then, $x^{\top}\xi' = \ones{}^{\top}\xi' = x^{\top}\xi - v$ and $\norm{\xi'}_p = (\ones{}^{\top}\xi')^{1/p}$. We finish the proof by noticing that:
      \begin{align*}
      (x^{\top} \xi - v) &
      = x^{\top} \xi' \\
      & \leq \norm{\xi'}_p \norm{x}_* \\
      & = \left( \ones{}^{\top}\xi' \right)^{1/p} \norm{x}_* \\
      & = \left( x^{\top}\xi - v \right)^{1/p} \norm{x}_* \leq \left( x^{\top}\xi - v + 1 \right)^{1/p} \norm{x}_*,
      \end{align*}
      where the first inequality is by Cauchy–Schwarz inequality and the last inequality is because \(x^{\top}\xi - v \geq 0\).
\end{enumerate}
\end{proof}
}
\begin{table}[!htbp]
\centering
\resizebox{0.8\textwidth}{!}{%
\editsII{\begin{tabular}{rrrr|rrr|rrr}
\toprule
                             &        &       &            & \multicolumn{3}{c}{\texttt{bin}} & \multicolumn{3}{c}{\texttt{cont}}           \\
\(n\)                        & \(I\)  & \(N\) & \(\delta\) & Optval                           & Time  & \OOS{} & Optval   & Time   & \OOS{} \\
\midrule
20                           & 10     & 50    & 0.01       & 136.0                            & 1.31  & 0.82   & 270.4    & 8.13   & 0.98   \\
                             &        &       & 0.05       & 159.4                            & 0.94  & 0.88   & 557.4    & 7.02   & 0.98   \\
                             &        &       & 0.1        & 203.4                            & 0.62  & 0.90   & INF      & 0.55   & N/A    \\
                             &        & 100   & 0.01       & 115.4                            & 2.38  & 0.88   & 239.8    & 15.58  & 0.98   \\
                             &        &       & 0.05       & 128.8                            & 1.39  & 0.91   & 552.4    & 18.00  & 0.99   \\
                             &        &       & 0.1        & 210.6                            & 1.16  & 0.94   & 1,020.0  & 1.66   & 1.00   \\
                             & 25     & 50    & 0.01       & 200.2                            & 2.32  & 0.84   & 365.2    & 15.99  & 0.98   \\
                             &        &       & 0.05       & 216.2                            & 1.50  & 0.87   & 615.7    & 9.96   & 0.99   \\
                             &        &       & 0.1        & 258.0                            & 0.99  & 0.89   & INF      & 0.67   & N/A    \\
                             &        & 100   & 0.01       & 213.8                            & 5.50  & 0.85   & 409.2    & 49.20  & 0.98   \\
                             &        &       & 0.05       & 233.8                            & 2.98  & 0.90   & 841.7    & 8.17   & 0.99   \\
                             &        &       & 0.1        & 304.8                            & 1.83  & 0.95   & INF      & 1.16   & N/A    \\
30                           & 10     & 50    & 0.01       & 150.2                            & 11.24 & 0.83   & 236.6    & 109.86 & 0.96   \\
                             &        &       & 0.05       & 161.6                            & 5.83  & 0.87   & 404.0    & 109.56 & 0.99   \\
                             &        &       & 0.1        & 204.2                            & 1.63  & 0.93   & 752.6    & 44.02  & 0.99   \\
                             &        & 100   & 0.01       & 147.0                            & 13.62 & 0.90   & 255.2    & 191.69 & 0.97   \\
                             &        &       & 0.05       & 162.6                            & 5.58  & 0.91   & 459.2    & 108.07 & 0.99   \\
                             &        &       & 0.1        & 223.6                            & 2.32  & 0.95   & 959.2    & 33.22  & 1.00   \\
                             & 25     & 50    & 0.01       & 205.6                            & 19.16 & 0.85   & 322.6    & 177.63 & 0.96   \\
                             &        &       & 0.05       & 226.2                            & 8.46  & 0.88   & 544.8    & 96.73  & 0.99   \\
                             &        &       & 0.1        & 252.8                            & 3.33  & 0.91   & 1,253.6  & 7.09   & 0.99   \\
                             &        & 100   & 0.01       & 173.2                            & 23.38 & 0.88   & 305.8    & 588.23 & 0.98   \\
                             &        &       & 0.05       & 199.4                            & 10.16 & 0.91   & 579.8    & 282.40 & 0.99   \\
                             &        &       & 0.1        & 261.4                            & 3.68  & 0.96   & 1,201.3  & 24.23  & 1.00   \\
\midrule
\multicolumn{4}{c|}{Average} & 197.84 & 5.47  & 0.89       & 578.41                           & 79.53 & 0.99                                \\
\bottomrule
\end{tabular} 
}}
\caption{Comparison between optimizing over \(Z_0\) and \(Z\)}\label{tab:vs-weijun-bench}
\end{table}
Next, we demonstrate Proposition~\ref{prop:comparison}
  numerically using a set of random test instances, generated in the
  same way as in Section~\ref{sec:exps-oos}, with various
  \(n, I, N, \delta\) and the same risk level \(\epsilon = 0.1\). For
  each problem size, we solve our two-stage reformulation in
  Proposition~\ref{prop:reform-two-stage} (denoted as \texttt{bin})
  and~\citet{xie-2019-distr-robus}'s reformulation
  $\min_{x \in Z_0}\{c^{\top}x\}$ (denoted as \texttt{cont}). We solve
  five random instances for each problem size and report the average
  optimal value, \editsII{out-of-sample constraint satisfaction probability (\OOS{}),} and CPU time in
  Table~\ref{tab:vs-weijun-bench}. An ``INF'' is placed in the column
  ``Optval'' if all five random instances are infeasible, and ``N/A''
  represents ``not available''. From Table~\ref{tab:vs-weijun-bench},
  we observe that for the same problem
  size,~\citet{xie-2019-distr-robus}'s reformulation always returns a
  higher optimal value than ours. For example, for
  \(n = 30, I = 25, N = 100, \delta = 0.01\), \texttt{bin} returns an
  optimal value of \(173.2\), while \texttt{cont}'s optimal value
  \(305.8\) is almost twice as large. Additionally, for
  \(n = 20, I = 25, N = 100, \delta = 0.1\), \texttt{cont} is
  infeasible in all five instances while \texttt{bin} always remains
  feasible and its average optimal value \(304.8\) is even lower than
  \texttt{cont}'s with a smaller radius \(\delta = 0.01\). \editsII{Besides,
  from Table~\ref{tab:vs-weijun-bench} we notice that the
  out-of-sample constraint satisfaction probability of~\texttt{cont} is nearly \(1.00\) across all instances, while that of~\texttt{bin} is closer to the target probability (i.e., 0.90).} These computational
  results align with the fact \(Z_0 \subseteq Z\) and demonstrate that
  incorporating support information can make (\DRCCbC{}) significantly
  less conservative.

\subsection{Strength of the two-stage reformulation and valid inequalities}%
\label{sec:exps-effi}

We demonstrate the strength of our two-stage reformulation, and the
single- and cross-scenario valid inequalities. Random test instances
with \(n \in \{60, 80\}\), \(I \in \{70, 90\}\) are generated, each of
which is paired with \(N \in \set{50, 100, 200}\),
\(\delta \in \{0.05, 0.1, 0.2, 0.3\}\), and
\(\epsilon \in \{0.05, 0.1\}\). The numerical results are reported in
Tables~\ref{tab:exps-effi-50}--\ref{tab:exps-effi-200}, where we use
\texttt{\(2\)-Stg}, \texttt{+single}, and \texttt{+cross} to denote
using the two-stage reformulation in
Proposition~\ref{prop:reform-two-stage} only, two-stage reformulation
with single-scenario inequalities, and two-stage reformulation with
both single- and cross-scenario inequalities, respectively.  In
addition, we report average CPU time (in seconds) and average
optimality gap over \(5\) random test instances, after they are
terminated due to either a proof of optimality/infeasibility (INF) or
a timelimit of \(3,600\) seconds, whichever occurs first. The number
in the parenthesis following the gap, if displayed, denotes the number
of instances we fail to prove optimality/INF within the
timelimit. \edits{Finally, an ``INF'' is placed in the column ``Gap
  (\%)'' if all five random instances are infeasible.}

\edits{From Tables~\ref{tab:exps-effi-50}--\ref{tab:exps-effi-200}, we observe that \texttt{\(2\)-Stg} generally took
shorter time to solve the easier instances than \texttt{+single}, while the
strength of single-scenario inequalities began to reveal in the harder
instances. For instance, when
\(n = 60, I = 90, \delta = 0.05, \epsilon = 0.05, N = 50\), \texttt{\(2\)-Stg}
only solved \(2\) out of \(5\) random instances and took \(2333.7\) seconds on
average, whereas \texttt{+single} solved all \(5\) instances and only took
\(70.1\) seconds on average. In Table~\ref{tab:exps-effi-50}, single-scenario inequalities on
average shortened the CPU time by approximately \(20\%\) and shrunk the
optimality gap significantly (around \(48\%\)). Specifically, they helped a lot on
problems where the ratios of \(\delta / \epsilon\) is larger, \eg{},
when \((\delta, \epsilon) = (0.05, 0.05)\) or \((\delta, \epsilon) = (0.1, 0.1)\). This
demonstrates the effectiveness of the single-scenario inequalities. In
Table~\ref{tab:exps-effi-200}, when the training data size \(N\) became larger (\(N = 200\)), the speedup
brought by single-scenario inequalities remained considerable, even though
its average CPU time and final gap were on par with \texttt{\(2\)-Stg}. For
instance, when \(n = 60, I = 70, \delta = 0.1, \epsilon = 0.1, N = 200\),
\texttt{\(2\)-Stg} solved \(4\) out of \(5\) instances and spent \(1731.0\)
seconds on average, while \texttt{+single} solved all the instances and took only
\(45\%\) of the time on average. By comparing the performance of \texttt{+cross}
and \texttt{+single}, we observe that cross-scenario inequalities can further
improve the computational performance. For example, in Table~\ref{tab:exps-effi-50}, where
\(n=80, I=90, \delta=0.05, \epsilon=0.05, N=50\), \texttt{+cross} solved \(4\)
instances to optimality within the timelimit, while \texttt{+single} solved only
\(2\), let alone \texttt{\(2\)-Stg}, which failed to solve any of the \(5\)
instances. As for the most challenging instances, in which all three approaches
failed to prove optimality within the timelimit, \texttt{+cross} proved
significantly smaller optimality gaps than \texttt{\(2\)-Stg} and
\texttt{+single}. This demonstrates the strength of the cross-scenario
inequalities on capturing the intersections of the feasible regions arising from
multiple scenarios or coverings.}
\begin{table}[!htbp]
\begin{center}
\caption{\label{tab:exps-effi-50} Benchmark between different models on synthetic
  data (\(N = 50\))}
\vspace{-3mm}
\begin{tabular*}{\textwidth}{l@{\extracolsep{\fill}}lll|rl|rl|rl}
\toprule
    &     &          &            & \multicolumn{2}{c}{\texttt{\(2\)-Stg}} & \multicolumn{2}{c}{\texttt{+single}} & \multicolumn{2}{c}{\texttt{+cross}}            \\
$n$ & $I$ & $\delta$ & $\epsilon$ & Time     & Gap (\%)                    & Time   & Gap (\%)                    & Time   & Gap (\%) \\
\midrule
60  & 70  & 0.05     & 0.05       & 1498.0   & 2          (1)              & 63.5   & 0                           & 38.0   & 0        \\
    &     &          & 0.10       & 3057.4   & 30.1       (4)              & 3009.7 & 19.5 (4)                    & 1447.3 & 0.6 (1)  \\
    &     & 0.10     & 0.05       & 5.3      & 0                           & 11.8   & 0                           & 8.7    & 0        \\
    &     &          & 0.10       & 836.3    & 2.9        (1)              & 722.8  & 0                           & 188.8  & 0        \\
    &     & 0.20     & 0.05       & 1.6      & INF                         & 9.1    & INF                         & 6.4    & INF      \\
    &     &          & 0.10       & 5.7      & 0                           & 12.1   & 0                           & 8.6    & 0        \\
    &     & 0.30     & 0.05       & 1.1      & INF                         & 7.4    & INF                         & 0.0    & INF      \\
    &     &          & 0.10       & 3.8      & 0                           & 11.7   & 0                           & 8.7    & 0        \\
    & 90  & 0.05     & 0.05       & 2333.7   & 8.2        (3)              & 70.1   & 0                           & 31.0   & 0        \\
    &     &          & 0.10       & 3600.3   & 44         (5)              & 3600.0 & 30.3 (5)                    & 1829.0 & 0        \\
    &     & 0.10     & 0.05       & 10.4     & 0                           & 14.2   & 0                           & 10.6   & 0        \\
    &     &          & 0.10       & 3328.5   & 9          (4)              & 905.8  & 0                           & 184.5  & 0        \\
    &     & 0.20     & 0.05       & 1.8      & INF                         & 9.0    & INF                         & 6.6    & INF      \\
    &     &          & 0.10       & 9.1      & 0                           & 15.1   & 0                           & 11.0   & 0        \\
    &     & 0.30     & 0.05       & 1.2      & INF                         & 7.2    & INF                         & 0.0    & INF      \\
    &     &          & 0.10       & 4.5      & 0                           & 11.9   & 0                           & 9.2    & 0        \\
80  & 70  & 0.05     & 0.05       & 3600.0   & 42.7       (5)              & 3088.9 & 10.8 (4)                    & 2333.8 & 6.6 (2)  \\
    &     &          & 0.10       & 3600.0   & 65.2       (5)              & 3600.1 & 51.2 (5)                    & 3600.0 & 23.4 (5) \\
    &     & 0.10     & 0.05       & 27.1     & 0                           & 27.9   & 0                           & 18.6   & 0        \\
    &     &          & 0.10       & 3600.2   & 35         (5)              & 3431.0 & 10.9 (4)                    & 2622.2 & 7.1 (3)  \\
    &     & 0.20     & 0.05       & 7.2      & 0                           & 17.8   & 0                           & 12.7   & 0        \\
    &     &          & 0.10       & 23.7     & 0                           & 31.4   & 0                           & 20.2   & 0        \\
    &     & 0.30     & 0.05       & 1.7      & INF                         & 14.1   & INF                         & 0.0    & INF      \\
    &     &          & 0.10       & 7.5      & 0                           & 20.6   & 0                           & 13.9   & 0        \\
    & 90  & 0.05     & 0.05       & 3600.1   & 39.6       (5)              & 3126.5 & 5.2 (3)                     & 2203.9 & 1.1 (1)  \\
    &     &          & 0.10       & 3600.0   & 64.1       (5)              & 3600.1 & 52.1 (5)                    & 3600.0 & 22.5 (5) \\
    &     & 0.10     & 0.05       & 57.0     & 0                           & 30.3   & 0                           & 19.3   & 0        \\
    &     &          & 0.10       & 3600.1   & 35.5       (5)              & 3600.0 & 16.1 (5)                    & 3255.6 & 8.2 (4)  \\
    &     & 0.20     & 0.05       & 6.5      & 0                           & 17.0   & 0                           & 12.6   & 0        \\
    &     &          & 0.10       & 44.0     & 0                           & 32.6   & 0                           & 21.6   & 0        \\
    &     & 0.30     & 0.05       & 2.0      & INF                         & 14.9   & INF                         & 0.0    & INF      \\
    &     &          & 0.10       & 9.2      & 0                           & 19.5   & 0                           & 14.7   & 0        \\
\midrule
\multicolumn{4}{c|}{Average$^{\ast}$} & 1140.2   & 14.6                        & 911.1  & 7.5                         & 673.0  & 2.7      \\
\bottomrule
\end{tabular*}
\begin{flushleft}
\footnotesize{$^{\ast}$ INF instances are excluded when calculating average gap.}
\end{flushleft}

\end{center}
\end{table}

\begin{table}[!htbp]
\centering
\caption{\label{tab:exps-effi-100} Benchmark between different models on synthetic
  data (\(N = 100\))}
\vspace{-3mm}
\begin{tabular*}{\textwidth}{l@{\extracolsep{\fill}}lll|rl|rl|rl}
\toprule
    &     &          &            & \multicolumn{2}{c}{\texttt{\(2\)-Stg}} & \multicolumn{2}{c}{\texttt{+single}} & \multicolumn{2}{c}{\texttt{+cross}}            \\
$n$ & $I$ & $\delta$ & $\epsilon$ & Time     & Gap (\%)                    & Time   & Gap (\%)                    & Time   & Gap (\%) \\
\midrule
60  & 70  & 0.05     & 0.05       & 968.7    & 2 (1)                       & 671.5  & 0                           & 247.8  & 0        \\
    &     &          & 0.10       & 3600.0   & 43.6 (5)                    & 3600.0 & 29.9 (5)                    & 2103.2 & 6.5 (2)  \\
    &     & 0.10     & 0.05       & 10.8     & 0                           & 25.6   & 0                           & 18.7   & 0        \\
    &     &          & 0.10       & 1903.4   & 2.9  (2)                    & 990.7  & 0.7 (1)                     & 264.7  & 0        \\
    &     & 0.20     & 0.05       & 2.5      & INF                         & 18.4   & INF                         & 13.9   & INF      \\
    &     &          & 0.10       & 13.5     & 0                           & 32.2   & 0                           & 18.5   & 0        \\
    &     & 0.30     & 0.05       & 2.0      & INF                         & 14.8   & INF                         & 0.0    & INF      \\
    &     &          & 0.10       & 8.0      & 0                           & 26.6   & 0                           & 17.4   & 0        \\
    & 90  & 0.05     & 0.05       & 2696.9   & 8 (3)                       & 1159.9 & 2.1 (1)                     & 307.0  & 0        \\
    &     &          & 0.10       & 3600.0   & 48.1 (5)                    & 3600.1 & 35 (5)                      & 2594.6 & 8.5 (3)  \\
    &     & 0.10     & 0.05       & 18.0     & 0                           & 34.0   & 0                           & 22.3   & 0        \\
    &     &          & 0.10       & 2440.3   & 4.8  (3)                    & 1717.0 & 1.7 (1)                     & 544.9  & 0.7 (1)  \\
    &     & 0.20     & 0.05       & 2.7      & INF                         & 18.1   & INF                         & 13.9   & INF      \\
    &     &          & 0.10       & 24.0     & 0                           & 53.4   & 0                           & 27.4   & 0        \\
    &     & 0.30     & 0.05       & 2.1      & INF                         & 15.0   & INF                         & 0.0    & INF      \\
    &     &          & 0.10       & 11.6     & 0                           & 33.1   & 0                           & 20.9   & 0        \\
80  & 70  & 0.05     & 0.05       & 3600.0   & 30.6 (5)                    & 3305.9 & 9.8 (4)                     & 2528.1 & 6.1 (3)  \\
    &     &          & 0.10       & 3600.0   & 65   (5)                    & 3600.2 & 51.1 (5)                    & 3600.0 & 26.1 (5) \\
    &     & 0.10     & 0.05       & 225.6    & 0                           & 102.0  & 0                           & 53.8   & 0        \\
    &     &          & 0.10       & 3600.1   & 29.8 (5)                    & 3600.0 & 16.6 (5)                    & 2941.8 & 9.2 (3)  \\
    &     & 0.20     & 0.05       & 7.4      & 0                           & 33.4   & 0                           & 24.6   & 0        \\
    &     &          & 0.10       & 415.2    & 0                           & 234.7  & 0                           & 60.2   & 0        \\
    &     & 0.30     & 0.05       & 2.9      & INF                         & 29.3   & INF                         & 0.0    & INF      \\
    &     &          & 0.10       & 23.8     & 0                           & 60.8   & 0                           & 35.5   & 0        \\
    & 90  & 0.05     & 0.05       & 3600.0   & 30.8 (5)                    & 3579.4 & 10.4 (4)                    & 3197.5 & 7.4 (4)  \\
    &     &          & 0.10       & 3600.3   & 65.5 (5)                    & 3601.0 & 52 (5)                      & 3600.3 & 27.1 (5) \\
    &     & 0.10     & 0.05       & 46.6     & 0                           & 50.8   & 0                           & 35.1   & 0        \\
    &     &          & 0.10       & 3600.0   & 27.1 (5)                    & 3600.0 & 15.2 (5)                    & 3241.2 & 7.5 (3)  \\
    &     & 0.20     & 0.05       & 7.2      & 0                           & 31.9   & 0                           & 23.9   & 0        \\
    &     &          & 0.10       & 95.5     & 0                           & 70.5   & 0                           & 38.1   & 0        \\
    &     & 0.30     & 0.05       & 3.1      & INF                         & 28.7   & INF                         & 0.0    & INF      \\
    &     &          & 0.10       & 24.4     & 0                           & 59.2   & 0                           & 34.1   & 0        \\
\midrule
\multicolumn{4}{c|}{Average$^{\ast}$} & 1179.9   & 13.8                        & 1062.4 & 8.6                         & 800.1  & 3.8      \\
\bottomrule
\end{tabular*}
\begin{flushleft}
\footnotesize{$^{\ast}$ INF instances are excluded when calculating average gap.}
\end{flushleft}

\end{table}

\begin{table}[!htbp]
\centering
\caption{\label{tab:exps-effi-200} Benchmark between different models on synthetic
  data (\(N = 200\))}
\vspace{-3mm}
\begin{tabular*}{\textwidth}{l@{\extracolsep{\fill}}lll|rl|rl|rl}
\toprule
    &     &          &            & \multicolumn{2}{c}{\texttt{\(2\)-Stg}} & \multicolumn{2}{c}{\texttt{+single}} & \multicolumn{2}{c}{\texttt{+cross}}            \\
$n$ & $I$ & $\delta$ & $\epsilon$ & Time     & Gap (\%)                    & Time   & Gap (\%)                    & Time   & Gap (\%) \\
\midrule
60  & 70  & 0.05     & 0.05       & 432.4    & 0                           & 326.5  & 0                           & 81.6   & 0        \\
    &     &          & 0.10       & 3600.0   & 44.7 (5)                    & 3600.0 & 32.8 (5)                    & 3428.5 & 7.7 (4)  \\
    &     & 0.10     & 0.05       & 14.2     & 0                           & 47.8   & 0                           & 34.2   & 0        \\
    &     &          & 0.10       & 1731.0   & 0.6 (1)                     & 745.4  & 0                           & 152.7  & 0        \\
    &     & 0.20     & 0.05       & 4.3      & INF                         & 35.5   & INF                         & 31.5   & INF      \\
    &     &          & 0.10       & 25.2     & 0                           & 67.3   & 0                           & 38.1   & 0        \\
    &     & 0.30     & 0.05       & 3.4      & INF                         & 30.0   & INF                         & 0.0    & INF      \\
    &     &          & 0.10       & 13.4     & 0                           & 50.1   & 0                           & 39.5   & 0        \\
    & 90  & 0.05     & 0.05       & 2044.0   & 2.1 (2)                     & 809.0  & 0                           & 105.8  & 0        \\
    &     &          & 0.10       & 3600.0   & 48.8 (5)                    & 3600.9 & 36.4 (5)                    & 3566.4 & 13 (4)   \\
    &     & 0.10     & 0.05       & 22.9     & 0                           & 63.3   & 0                           & 41.1   & 0        \\
    &     &          & 0.10       & 2930.7   & 6.3 (4)                     & 2457.6 & 2.8 (3)                     & 461.4  & 0        \\
    &     & 0.20     & 0.05       & 5.0      & INF                         & 36.6   & INF                         & 26.9   & INF      \\
    &     &          & 0.10       & 48.5     & 0                           & 95.8   & 0                           & 50.0   & 0        \\
    &     & 0.30     & 0.05       & 3.7      & INF                         & 29.1   & INF                         & 0.0    & INF      \\
    &     &          & 0.10       & 23.3     & 0                           & 74.4   & 0                           & 46.3   & 0        \\
80  & 70  & 0.05     & 0.05       & 3600.0   & 31.3 (5)                    & 3600.0 & 21.2 (5)                    & 3600.0 & 12.9 (5) \\
    &     &          & 0.10       & 3600.1   & 75.2  (5)                   & 3600.4 & 54.3 (5)                    & 3600.3 & 35.4 (5) \\
    &     & 0.10     & 0.05       & 193.8    & 0                           & 165.2  & 0                           & 78.3   & 0        \\
    &     &          & 0.10       & 3600.0   & 31.3 (5)                    & 3600.2 & 22.1 (5)                    & 3600.0 & 12.9 (5) \\
    &     & 0.20     & 0.05       & 15.2     & 0                           & 71.2   & 0                           & 56.2   & 0        \\
    &     &          & 0.10       & 180.7    & 0                           & 200.7  & 0                           & 97.5   & 0        \\
    &     & 0.30     & 0.05       & 5.8      & INF                         & 58.6   & INF                         & 0.0    & INF      \\
    &     &          & 0.10       & 46.0     & 0                           & 119.7  & 0                           & 83.9   & 0        \\
    & 90  & 0.05     & 0.05       & 3600.0   & 23.3 (5)                    & 3600.0 & 14.5 (5)                    & 2141.8 & 3.2 (2)  \\
    &     &          & 0.10       & 3600.2   & 70.2  (5)                   & 3601.2 & 51.7 (5)                    & 3600.0 & 30.1 (5) \\
    &     & 0.10     & 0.05       & 110.8    & 0                           & 163.7  & 0                           & 98.2   & 0        \\
    &     &          & 0.10       & 3600.0   & 30.3 (5)                    & 3600.0 & 17.6 (5)                    & 3600.0 & 6.3 (5)  \\
    &     & 0.20     & 0.05       & 14.3     & 0                           & 69.9   & 0                           & 47.9   & 0        \\
    &     &          & 0.10       & 856.0    & 0.1                         & 787.7  & 0                           & 136.4  & 0        \\
    &     & 0.30     & 0.05       & 6.0      & INF                         & 56.6   & INF                         & 0.0    & INF      \\
    &     &          & 0.10       & 123.2    & 0                           & 278.2  & 0                           & 110.7  & 0        \\
\midrule
\multicolumn{4}{c|}{Average$^{\ast}$} & 1176.7   & 9.1                         & 1113.8 & 9.7                         & 904.9  & 4.7      \\
\bottomrule
\end{tabular*}
\begin{flushleft}
\footnotesize{$^{\ast}$ INF instances are excluded when calculating average gap.}
\end{flushleft}

\end{table}


\clearpage
\begin{appendices}
\section{Proof of Proposition~\ref{reform:np-hard-sep}} \label{sec:appendix}
\edits{
\begin{proof}[Proof]\label{appendix:np-hard-sep-pf}
Consider a graph \(\mathcal{G} := (\mathcal{V}, \mathcal{E})\) with vertex set
\(\mathcal{V}\) and edge set \(\mathcal{E}\), on which the classic NP-hard
vertex cover problem has the following binary linear formulation:
\begin{align*}
  \min ~~
  & \sum_{u \in \mathcal{V}} x_u, \tag{VC}\\
  \st ~~
  & x^{\top}\xi_{u, v} \geq 1, \forall (u, v) \in \mathcal{E}, \\
  & x_u \in \binaries, \forall u \in \mathcal{V},
\end{align*}
where binary variables \(x_u\) indicate whether node \(u \in \mathcal{V}\)
is part of the vertex cover and \(\xi_{u,v}\) is a binary vector with two
nonzero entries: \(\xi_{u,v} = e_u + e_v\) and
\(x^{\top}\xi_{u,v} = x_u + x_v\) for all \(x \in \binaries^{\abs{\mathcal{V}}}\).
In particular, a vertex cover \(x\) can cover every edge twice
if and only if all nodes are in the cover, \ie{}, \(x = \ones\). We provide
a polynomial reduction from (VC) to the following instance
of~\eqref{eqn:reform-nphard} to finish the proof:
\begin{align}
  \min_{x \in \binaries^{\abs{\mathcal{V}'}}} ~~
  & L(x) := \frac{1}{\abs{\mathcal{V}'}} \sum_{u \in \mathcal{V}} x_u +
    \frac{1}{\abs{\mathcal{V}'}} x_{w'} + x_{w} -
    \min_{(u, v) \in \mathcal{E}'} \left( x^{\top} \xi_{u,v} \right)^{1/p}, %
    \label{eqn:rebut-nphard}
\end{align}
where we add two new nodes \(w\) and \(w'\) and augment the graph \(\mathcal{G}\) to obtain
\(\mathcal{G}' := (\mathcal{V}', \mathcal{E}')\),
\(\mathcal{V}' := \mathcal{V} \cup \set{w, w'}\), and
\(\mathcal{E}' := \mathcal{E} \cup \set{(w, w')}\). Since the optimal value of
(VC) is bounded above by \(\abs{\mathcal{V}}\), we are only interested in
whether there is a vertex cover of size less than or equal to \(K \in \integers_+, 1 \leq K \leq \abs{\mathcal{V}} - 1\). On the one hand, suppose that there
exists a vertex cover \(x\) with size less than or equal to \(K\), then
together with \(x_{w'} := 1\) and \(x_w := 0\), \(x^{\prime} := (x, x_{w'}, x_w)\) form a
feasible solution to~\eqref{eqn:rebut-nphard} with objective value less than or equal to
\((K + 1) / \abs{\mathcal{V}'} - 1\) because
\begin{align*}
  L(x^{\prime}) = \frac{1}{\abs{\mathcal{V}'}} \sum_{u \in \mathcal{V}} x_u +
    \frac{1}{\abs{\mathcal{V}'}} x_{w'} + x_{w} -
  \min_{(u, v) \in \mathcal{E}'} \left( x^{\top} \xi_{u,v}\right)^{1/p}
  \leq \frac{1}{\abs{\mathcal{V}'}}(K + 1) - 1.
\end{align*}
On the other hand, suppose that there is a
\(y := (y_0, y_{w'}, y_{w}) \in \binaries^{\abs{\mathcal{V}'}}\) such that \(L(y) \leq (K + 1) / \abs{\mathcal{V}'} - 1\). We discuss the following three cases, with regard to the coverage number \(C(y) := \min_{(u, v) \in \mathcal{E}'} \left(y^{\top} \xi_{u, v}\right)^{1/p}\), to show that there exists a vertex cover with size at most $K$.
\begin{enumerate}
\item \(C(y) = 0\) is impossible because
      \begin{align*}
        C(y) =
        \min_{(u, v) \in \mathcal{E}'} \left(y^{\top}\xi_{u, v} \right)^{1/p}
        & \geq  \frac{1}{\abs{\mathcal{V}'}} \sum_{u \in \mathcal{V}} y_u +
        \frac{1}{\abs{\mathcal{V}'}} y_{w'} + y_{w} -
        \frac{1}{\abs{\mathcal{V}'}} (K + 1) + 1  \\
        & \geq -\frac{1}{\abs{\mathcal{V}'}} (K + 1) + 1
          \geq 1 - \frac{\abs{\mathcal{V}} + 1}{\abs{\mathcal{V}} + 2} > 0.
      \end{align*}
\item \(C(y) = 1\): suppose that \(y_w = 1\) and \(y_{w'}\) equals zero or one. Then an alternative solution \(y' := (y_0, y'_{w'}, y'_w)\) with \(y'_{w'} = 1\) and \(y'_w = 0\) to formulation \eqref{eqn:rebut-nphard} satisfies:
      \begin{align*}
        C(y') & = \min \Set{
        \min_{(u, v) \in \mathcal{E}} \left(y_0^{\top}\xi_{u, v}  \right)^{1/p},
        (y'_{w'} + y'_{w})^{1/p}
        } = 1 \quad \text{and} \\
        L(y') & \leq L(y) \leq \frac{1}{\abs{\mathcal{V}'}} (K + 1) - 1.
      \end{align*}
      It follows that \(y_0\) is a vertex cover of \(\mathcal{G}\), and we may assume that \(y_{w'} = 1, y_w = 0\) in formulation \eqref{eqn:rebut-nphard} without loss of optimality. Hence,
      \begin{align*}
        L(y) = \frac{1}{\abs{\mathcal{V}'}} \sum_{u \in \mathcal{V}} (y_0)_u +
        \frac{1}{\abs{\mathcal{V}'}} y_{w'} + y_{w} -
        C(y)
        = \frac{1}{\abs{\mathcal{V}'}} \left(
        \sum_{u \in \mathcal{V}} (y_0)_u + 1 \right) - 1
        \leq \frac{1}{\abs{\mathcal{V}'}} (K + 1) - 1,
      \end{align*}
      which implies that \(y_0\) is a vertex cover of size at most \(K\).
\item \(C(y) = 2^{1/p}\) is also impossible. Indeed, since \(C(y) = 2^{1/p}\), every edge of $\mathcal{G}'$ is covered twice, implying that
      \(y_{w'} = w_{w} = (y_0)_u = 1\) for all \(u \in \mathcal{V}\). Then,
      \begin{align*}
        L(y) = \frac{1}{\abs{\mathcal{V}'}} ( \abs{\mathcal{V}} + 1) + 1 - 2^{1/p}
        \leq \frac{1}{\abs{\mathcal{V}'}} (K + 1) - 1
        \leq \frac{1}{\abs{\mathcal{V}'}} \abs{\mathcal{V}} - 1.
      \end{align*}
      Simplifying the two ends of the above inequalities gives us
      \(2 + \abs{\mathcal{V}'}^{-1} \leq 2^{1/p}\), which is impossible for
      \(p \geq 1\). Therefore, \(C(y)\) cannot be \(2^{1/p}\).
\end{enumerate}
To sum up, if there is a feasible solution $y$ to formulation \eqref{eqn:rebut-nphard} with $L(y) \leq (K + 1) / \abs{\mathcal{V}'} - 1$, then there is a vertex cover of \(\mathcal{G}\) with size at most \(K\).
\end{proof}
}

\end{appendices}

\printbibliography{}

\end{document}